\theoremstyle{plain}
\newtheorem{thm}{Theorem}[section]
\newtheorem{cor}{Corollary}[section]
\newtheorem{prop}{Proposition}[section]
\newtheorem{lem}{Lemma}[section]
\newtheorem{rmk}{Remark}[section]
\title[Holomorphic curves in the symplectizations of lens spaces]{Holomorphic curves in the symplectizations of\\ lens spaces: an elementary approach}
\author{Murat Sa\u glam}
\begin{document}

\begin{abstract}
We present an elementary computational scheme for the moduli spaces of rational pseudo-holomorphic curves in the symplectizations of 3-dimensional lens spaces, which are equipped with Morse-Bott contact forms induced by the standard Morse-Bott contact form on $S^3$. As an application, we prove that for $p$ prime and $1<q,q'<p-1$, if there is a contactomorphism between lens spaces $L(p,q)$ and $L(p,q')$, where both spaces are equipped with their standard contact structures, then $q\equiv (q')^{\pm 1}$ in$\mod p$. For the proof we study the moduli spaces of pair of pants with two non-contractible ends in detail and establish that the standard almost complex structure that is used is regular. Then the existence of a contactomorphism enables us to follow a neck-stretching process, by means of which we compare the homotopy relations encoded at the non-contractible ends of the pair of pants in the symplectizations of $L(p,q)$ and $L(p,q')$. Combining our proof with the result of Honda on the classification of universally tight contact structures on lens spaces, we provide a purely symplectic/contact topological proof of the diffeomorphism classification of lens spaces in the class mentioned above. 
\end{abstract}

\maketitle
\tableofcontents

\section{Introduction}
The objects of the study in this paper are 3-dimensional lens spaces and the standard Reeb dynamics on them. These dynamical systems are given as finite quotients of the Hopf fibration on $S^3$. To be more precise, we consider \emph{the standard contact form on $S^3$} given by
\begin{eqnarray}\label{alpha0}
\alpha_0(z)[w]={\rm Im} \langle w,z\rangle_{\mathbb{C}}
\end{eqnarray}
where $z=(z_1,z_2)\in S^3\hookrightarrow \mathbb{C}^2$, $w=(w_1,w_2)\in T_z S^3\subset T_z\mathbb{C}^2\cong \mathbb{C}^2$ and $\langle \cdot,\cdot\rangle_{\mathbb{C}}$ is the standard hermitian product. Then \emph{the standard contact structure} $\xi_0$ at $z\in S^3$ is given by the hermitian complement of the complex line along $z\in \mathbb{C}^2$. The Reeb vector field and its flow are given by 
\begin{eqnarray}\label{R0}
R_0(z_1,z_2)=(iz_1,iz_2),\;\phi_t(z_1,z_2)=(e^{it}z_1,e^{it}z_2).
\end{eqnarray}
We see that all Reeb orbits are closed and we have $2\pi$ as the common minimal period. The fibration via Reeb orbits leads to the Hopf fibration
\begin{eqnarray}\label{hopf}
\pi: S^3\rightarrow \mathbb{C}P^1, (z_1,z_2)\mapsto (z_1:z_2).
\end{eqnarray}
We consider the $\mathbb{Z}_p$-action on $S^3$ generated by 
\begin{eqnarray}\label{sigma}
\sigma(z_1,z_2)=(e^{i\theta}z_1,e^{iq\theta}z_2)
\end{eqnarray}
where $0<q<p$ are integers with $(p,q)=1$ and $\theta=2\pi/p$. This is a free action and gives rise to the lens space $L(p,q)$ as the quotient space, where
\begin{eqnarray}\label{P}
\texttt{p}: S^3\rightarrow L(p,q)
\end{eqnarray}
is the quotient/covering map. We note that since $\texttt{p}$ is a universal covering,
 $$\pi_1(L(p,q))\cong H_1(L(p,q),\mathbb{Z})\cong \mathbb{Z}_p.$$ For later purposes we fix the integer $0<v<p$ such that 
\begin{equation}\label{defnofv}
vq\equiv 1 \mod p.
\end{equation}
Since $\sigma$ is a complex linear map on $\mathbb{C}^2$, it preserves the standard
contact form. Hence we have an induced \textit{standard contact  form} $\alpha$ on $L(p,q)$, given by \begin{eqnarray}\label{alpha}
\texttt{p}^*\alpha=\alpha_0,
\end{eqnarray}
which defines the \textit{standard contact structure} 
\begin{eqnarray}\label{xi}
\xi=\ker \alpha =\texttt{p}_*\xi_0.
\end{eqnarray}
We also note that $\texttt{p}_*R_0=R$, where $R$ is the Reeb field of $\alpha$.
\begin{rmk}\label{pqconditions}
(1) In this paper, we restrict ourselves to the case where  $1<q<p-1$. In the remaining case, the Reeb flow still induces an $S^1$-bundle structure and it is relatively less interesting, see \cite{Bourgeois}.

(2) We also assume that $p$ is prime. This assumption simplifies the computational aspect of our work but we claim that most of the statements we present in this paper can be generalized after some extra care.

(3) In what follows, the expressions like $k\equiv l$ and $k\not\equiv l$ should be understood in$\mod p$.
\end{rmk}
The dynamics of the Reeb field $R$ is of Morse-Bott type and is easy to describe since $\sigma$ commutes with the flow of $R_0$. There are two closed orbits of $R_0$ in $S^3$ which are invariant under the group action,
namely
\begin{eqnarray}\label{gammainfty}
\gamma_\infty=\{(z_1,0)\,|\,|z_1|=1\}=\{(e^{it},0)\,|\,t\in[0,2\pi]\}
\end{eqnarray}
and 
\begin{eqnarray}\label{gamma0}
\gamma_0=\{(0,z_2)\,|\,|z_2|=1\}=\{(0,e^{it})\,|\,t\in[0,2\pi]\}.
\end{eqnarray}
These orbits are mapped to the points $\infty$ and $0$ in $\mathbb{C}P^1\cong \mathbb{C}\cup\{\infty\}$ respectively via (\ref{hopf}) and  they project
to $p$-fold covers of simple orbits $\overline{\gamma}_\infty$ and $\overline{\gamma}_0$ in $L(p,q)$. 
These simple orbits have period $2\pi/p$ and they represent generators of the fundamental group.
We note that the simple orbits of $R_0$ besides $\gamma_{0}$ and $\gamma_\infty$ are permuted by $\sigma$. In fact we have an induced action on the orbit space 
\begin{eqnarray}\label{sigmaflat}
\sigma_\flat:\mathbb{C}P^1\rightarrow \mathbb{C}P^1, (z_1:z_2)\mapsto (e^{i\theta}z_1:e^{iq\theta}z_2),
\end{eqnarray}
which gives an orbifold structure to the orbit spaces of $R$ via the quotient map
\begin{eqnarray}\label{Pflat}
\texttt{p}_\flat:\mathbb{C}P^1\rightarrow \overline{\mathbb{C}P^1}.
\end{eqnarray}
The fixed points 0 and $\infty$ correspond to the (iterations of) orbits
$\overline{\gamma}_0$ and $\overline{\gamma}_{\infty}$ respectively. 
The isotropy group of both singularities is $\mathbb{Z}_p$.

Our aim is to study punctured holomorphic curves in the symplectizatios of lens spaces. These curves are the main objects studied in symplectic field theory (SFT) see \cite{SFT}. As pointed above, the standard Reeb flow on $S^3$ is perfectly symmetric and if one chooses an almost complex structure on $\xi_0$, which is invariant under the Reeb flow, the resulting SFT-type almost complex structure on $\mathbb{R}\times S^3$ leads to a description of punctured curves in terms of the closed curves in the orbit space $\mathbb{C}P^1$ paired with meromorphic sections above them since $\mathbb{R}\times S^3$ may be viewed as a complex (in fact holomorphic) line bundle without its zero section, see section 2.  This is a particular example of a phenomenon observed for the pre-quantization bundles, see \cite{Bourgeois}. 

If such a symmetric almost complex structure on  $\mathbb{R}\times S^3$ is also invariant under the $\mathbb{Z}_p$- action, then it descends to  $\mathbb{R}\times L(p,q)$ and one may study punctured curves in $\mathbb{R}\times L(p,q)$ explicitly in terms of their lifts to $\mathbb{R}\times S^3$, which are obtained after precomposing the curves in $\mathbb{R}\times L(p,q)$ with a suitable covering of their domains. This idea goes back to \cite{Hind} and executed in detail in \cite{murthesis} for lens spaces and their unit cotangent bundles. It turns out that it is easy to determine whether a given moduli space of rational curves is non-empty and has the correct dimension. We present a systematic treatment of this idea in the next section. For an application of the resulting computational scheme, we prove the following. 
\begin{thm}\label{teoatintro} Let $p$ be a prime number and $1<q,q'<p-1$. Let $\alpha$ and $\alpha'$ be the standard contact forms on $L(p,q)$ and $L(p,q')$ respectively. 
Suppose that there is a positive contactomorphism $$\varphi: \left(L(p,q),\xi=\ker \alpha\right)\rightarrow \left(L(p,q'),\xi'=\ker \alpha'\right).$$ 
Then $
q\equiv (q')^{\pm1}$.
\end{thm}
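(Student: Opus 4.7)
The plan is to single out the homotopy classes of $\overline{\gamma}_\infty$ and $\overline{\gamma}_0$ inside $\pi_1(L(p,q)) \cong \mathbb{Z}_p$ via the existence of specific rigid pair-of-pants moduli spaces, and then to use $\varphi$ together with a neck-stretching argument to force these classes to be permuted under the induced isomorphism $\varphi_\ast$. Comparing the two possible permutations will give exactly the alternatives $q\equiv q'$ and $q\equiv(q')^{-1}$. The homotopy labels are immediate from $\texttt{p}$: lifting a single traversal of $\overline{\gamma}_\infty$ to $S^3$ yields the path $t\mapsto(e^{it},0)$, $t\in[0,2\pi/p]$, which terminates at $\sigma(1,0)$, so $[\overline{\gamma}_\infty]\equiv 1\in\mathbb{Z}_p$; the corresponding lift of $\overline{\gamma}_0$ terminates at $(0,e^{i\theta})=\sigma^v(0,1)$ with $v$ as in (\ref{defnofv}), so $[\overline{\gamma}_0]\equiv v$. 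The analogous picture in $L(p,q')$ gives $[\overline{\gamma}'_\infty]\equiv 1$ and $[\overline{\gamma}'_0]\equiv v'$, so the conclusion $q\equiv(q')^{\pm 1}$ is equivalent to $v\equiv(v')^{\pm 1}$.

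Using the computational scheme of Section~2, via the lift of curves in $\mathbb{R}\times L(p,q)$ to $\mathbb{R}\times S^3$ and their description as meromorphic sections, I would exhibit non-empty, zero-dimensional, regular moduli spaces of rational pair of pants in $\mathbb{R}\times L(p,q)$ with two non-contractible ends, designed so that at least one of the simple orbits $\overline{\gamma}_\infty$ or $\overline{\gamma}_0$ appears explicitly among the ends. Regularity of the standard almost complex structure must be verified by hand from the meromorphic section data, in order to preserve the explicit asymptotic labels. The output is a collection of rigid curves which, through their mere existence, distinguishes the set $\{1,v\}\subset\mathbb{Z}_p$ as the pair of homotopy classes realized by simple Reeb orbits.

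Finally, writing $\varphi^\ast\alpha'=f\alpha$ for a positive function $f$, I would insert a long neck modeled on $L(p,q)$ between the symplectization pieces carrying the standard almost complex structures for $\alpha$ and for $\varphi^\ast\alpha'$, and pass to the SFT limit. The broken building has a top level in $\mathbb{R}\times L(p,q)$ with $\alpha$-ends and a bottom level in $\mathbb{R}\times L(p,q')$ (via $\varphi$) with $\alpha'$-ends; the rigid curves survive the deformation, and matching the distinguished pair $\{1,v\}$ against $\{1,v'\}$ under $\varphi_\ast$ leaves only two possibilities: either $\varphi_\ast(1)=1$ and $\varphi_\ast(v)=v'$, giving $v\equiv v'$ and hence $q\equiv q'$, or $\varphi_\ast(1)=v'$ and $\varphi_\ast(v)=1$, giving $vv'\equiv 1$ and hence $q\equiv(q')^{-1}$. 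The positivity hypothesis on $\varphi$ is what rules out any remaining sign flip in this identification.

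The main obstacles are (i) proving regularity of the standard almost complex structure on the chosen pair-of-pants moduli without a generic perturbation, so as to keep the asymptotic labels explicit, and (ii) controlling the SFT limit strongly enough that the rigid curves persist and no auxiliary buildings arise that would blur the identification of the distinguished pair $\{1,v\}$. The detailed moduli-space analysis promised in the abstract is aimed precisely at these two points; once they are in place, the arithmetic conclusion is just the case analysis above.
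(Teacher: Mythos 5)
Your outline matches the paper's strategy in its broad strokes: rigid pair of pants with two non-contractible positive ends, hand-checked regularity of the standard almost complex structure, and a neck-stretching argument driven by the positive contactomorphism. Your computation of the homotopy labels $[\overline{\gamma}_\infty]\equiv 1$ and $[\overline{\gamma}_0]\equiv v$ is also correct. But the final arithmetic step has a genuine gap. You claim the conclusion follows from $\varphi_\ast$ sending the set $\{1,v\}$ to $\{1,v'\}$, then from casework on permutations. That is not what the neck-stretching delivers, and the homotopy data alone is in fact tautological: the connecting cylinders in the limiting building carry multiplicities $k^0,k^\infty$ at the top (with $k^0\equiv(1-v)^{-1}$, $k^\infty\equiv(1-q)^{-1}$, $k^0+k^\infty=p+1$) and $l^0,l^\infty$ at the bottom (with the primed analogues), and once the contractible-bottom relations $k^0 v+k^\infty\equiv 0$ and $l^0 v'+l^\infty\equiv 0$ are substituted, the homotopy-matching equations for the unknown multiplier of $\varphi_\ast$ collapse to an identity. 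No relation between $q$ and $q'$ survives that computation by itself.

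What actually closes the argument is the action control of Propositions \ref{actiondecrease} and \ref{actiondecreasefinal}, proved by lifting the components of the limit building to $\mathbb{R}\times S^3$, extending them over the punctures to closed curves in the compactified bundle $\widehat{L}$, and applying adjunction. This yields, for each connecting cylinder, that the top multiplicity dominates the bottom multiplicity, so $k^0\geq l^0$ and $k^\infty\geq l^\infty$ (or the swapped pair of inequalities), and the balanced sums $k^0+k^\infty=l^0+l^\infty=p+1$ then force equality of integers, not merely of residues. Only at that point does Lemma \ref{localrem} turn $(1-v)^{-1}\equiv(1-v')^{-1}$ or $(1-v)^{-1}\equiv(1-q')^{-1}$ into $q\equiv q'$ or $q\equiv(q')^{-1}$. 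The same action bounds are what rule out the unwanted buildings (planes, the $\mathcal{A}^+=\mathcal{A}^-=2\pi$ configuration, extra bubbling) during cobordism compactness and the final stretch. Listing control of the SFT limit as an obstacle does not substitute for identifying this mechanism: action decrease via intersection theory of lifted curves is the irreducible input that is missing from your proposal.
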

Here \emph{the positivity of $\varphi$} means that $\varphi^* \alpha'=f\alpha$ for some positive smooth function $f$ on $L(p,q)$. 
For the proof we study the moduli spaces of pair of pants with non-contractible positive ends in detail. We show that non-empty components of such moduli spaces are cut out transversally by comparing the Fredholm index of the pair of pants with the dimension of the equivariant pseudo-holomorphic perturbations of their lifts. This enables us to perturb the data and carry on a standard neck-stretching argument after the given contactomorphism. Noting the homotopy/homology relation $[\overline{\gamma}_{\infty}]=q[\overline{\gamma}_0]$, the result follows from the comparison of the homotopy relation that is encoded by the positive ends of certain pair of pants in $\mathbb{R}\times L(p,q)$ with the corresponding relation encoded by the pair of pants in $\mathbb{R}\times L(p,q')$, which is obtained after stretching the neck, see Figure \ref{outline}. 

The lens spaces were introduced by W. Dyck in 1884 and since then the problem of their classification had attracted many great mathematicians like H. Tietze, H. Poincar\'{e}, J.W. Alexander, H. Seifert, see \cite{hist} for a more precise historical account. In 1935, the first complete classification of lens spaces was given by Reidemeister in the category of piecewise linear (PL) homeomorphisms \cite{Reidemeister} and later in 1960, Brody showed that the homeomorphism classification coincides with the classification in PL category, see \cite{Brody}. After many other contributions, we finally have the following classification statement for 3-dimensional lens spaces, see \cite{Cohen} for a modern proof. 
\begin{thm}\label{class} The lens spaces $L(p,q)$ and $L(p,q')$ are
\begin{itemize}
\item homotopy equivalent if and only if 
$$qq'\equiv \pm a^2$$
 for some $a\in \mathbb{N}$,
\item simple homotopy equivalent/homeomorphic/diffeomorphic if and only if $$q\equiv \pm (q')^{\pm 1}.$$
\end{itemize} 
\end{thm}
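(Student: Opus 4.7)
The plan is to split Theorem \ref{class} into the homotopy equivalence classification and the diffeomorphism classification. The first half is purely algebraic-topological: the linking form on $H_1(L(p,q);\mathbb{Z})\cong\mathbb{Z}_p$, together with Reidemeister torsion for the simple homotopy refinement, detects $qq'$ modulo squares and signs and is well documented (see \cite{Cohen}). I will take the homotopy half as known and concentrate on giving a contact topological proof of the diffeomorphism classification based on Theorem \ref{teoatintro}.

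For the direction $q\equiv\pm(q')^{\pm 1}\Rightarrow L(p,q)\cong L(p,q')$, I write down explicit diffeomorphisms of $S^3$ that descend through $\texttt{p}$: the identity when $q'\equiv q$; the swap $(z_1,z_2)\mapsto(z_2,z_1)$ when $q'\equiv q^{-1}$; complex conjugation in one factor (orientation reversing) when $q'\equiv -q$; and the composition of the last two when $q'\equiv -q^{-1}$. In each case the chosen map intertwines the $\mathbb{Z}_p$-action (\ref{sigma}) on the source with the corresponding action on the target under precisely the stated arithmetic condition, so it descends to a diffeomorphism of the quotient lens spaces.

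For the converse, suppose $\varphi\colon L(p,q)\to L(p,q')$ is a diffeomorphism. I pull back the standard contact structure to obtain $\varphi^{*}\xi'$ on $L(p,q)$; its lift to the universal cover $S^3$ is again the standard tight structure $\xi_0$, so $\varphi^{*}\xi'$ is universally tight. By Honda's classification of tight contact structures on lens spaces, the universally tight structures on $L(p,q)$ are, up to isotopy and ambient orientation reversal, exactly the two standard ones. I use Gray stability to convert the isotopy of contact structures supplied by Honda into an ambient isotopy and modify $\varphi$ within its diffeotopy class so that it becomes a contactomorphism $\tilde\varphi\colon(L(p,q),\xi)\to(L(p,q'),\xi')$, possibly after precomposing with an orientation reversing self-diffeomorphism of $L(p,q)$; such a precomposition has the effect of replacing $q$ by $-q$. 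After an overall sign adjustment I arrange $\tilde\varphi$ to be positive, apply Theorem \ref{teoatintro} to conclude $q\equiv (q')^{\pm 1}$ in that branch, and track the sign from the possibly inserted orientation reversal to recover the full ambiguity $q\equiv\pm(q')^{\pm 1}$.

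The main obstacle will be this third step: verifying that Honda's isotopy classification can indeed be upgraded via Gray stability to an ambient contactomorphism while keeping control of the co-orientation of the contact structure, so that the positivity hypothesis of Theorem \ref{teoatintro} becomes available after at most one orientation reversal. A secondary but necessary bookkeeping task is to confirm that the four arithmetic possibilities on the right-hand side correspond, on the contact side, precisely to the combinations of the exponent ambiguity in Theorem \ref{teoatintro} with the sign introduced by precomposing with an orientation reversing self-diffeomorphism of $L(p,q)$.
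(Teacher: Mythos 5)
Theorem~\ref{class} is stated in the paper as a known classical result with \cite{Cohen} cited as a reference; the paper does not prove it. What the paper does prove via contact topology is the unlabelled theorem immediately following it, which adds the hypotheses that $p$ is prime and $1<q,q'<p-1$ and concludes only the ``only if'' of the diffeomorphism classification. Your plan reproduces the outline of that later proof, not of Theorem~\ref{class}: Theorem~\ref{teoatintro} is available only for $p$ prime and $1<q,q'<p-1$, and the Honda step (``precisely two universally tight positive contact structures'', distinguished by Euler classes $(q+1)[\overline\gamma_0]$ and $(-q-1)[\overline\gamma_0]$) explicitly invokes $q\neq p-1$. So even if all details were filled in, your argument would at best establish the restricted statement, and you should state it as such rather than as a proof of Theorem~\ref{class}.

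Within that restricted setting, the orientation bookkeeping in your third step is where the proof would actually fail. You propose to ``precompose with an orientation reversing self-diffeomorphism of $L(p,q)$'', but such a map does not exist in general (it exists only when $-q\equiv q^{\pm1}$), so this step is not available. The paper disentangles two separate sign phenomena. First, if $\varphi$ is orientation \emph{reversing}, one precomposes with the orientation reversing diffeomorphism $\psi\colon L(p,-q)\to L(p,q)$ induced by $(z_1,z_2)\mapsto(z_1,\overline z_2)$, which \emph{changes the source lens space} to $L(p,-q)$ and then yields $-q\equiv(q')^{\pm1}$ from the orientation preserving case. Second, within the orientation preserving case, Honda gives that $\ker\beta$ (with $\beta=\varphi^*\alpha'$) is isotopic as a positive contact structure either to $(\xi,d\alpha)$ or to $(\xi,-d\alpha)$; the latter possibility is repaired by composing with the \emph{orientation preserving} self-diffeomorphism of $L(p,q)$ induced by $(z_1,z_2)\mapsto(\overline z_1,\overline z_2)$, which satisfies $\psi^*\alpha=-\alpha$ and thus restores the positivity needed for Theorem~\ref{teoatintro}. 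Collapsing these two distinct mechanisms into a single ``orientation reversal of the source'' leaves both the positivity hypothesis and the full four-fold ambiguity $q\equiv\pm(q')^{\pm1}$ unaccounted for.
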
 
One of the important aspects of the above theorem is providing examples of manifolds which are homotopy equivalent but not simple homotopy equivalent/ homeomorphic/ diffeomorphic.  We also note that if $q\equiv \pm (q')^{\pm 1}$ is assumed then it is easy to define an explicit diffeomorphism between $L(p,q)$ and $L(p,q')$ for each case of the assumption. Hence the \qq{only if} part of the second statement of Theorem \ref{class} is the non-trivial part and it requires sophisticated topological tools like the Reidemeister-Franz torsion, which was introduced by Reidemeister in \cite{Reidemeister} for his classification result and later generalized by Franz in \cite{Franz}. 

We note that in topological point of view Theorem \ref{teoatintro} is a straightforward consequence of Theorem \ref{class}. If we orient lens spaces via the standard contact forms, then due to the dimensional reasons any contactomorphism, independent of being positive or negative, is an orientation preserving diffeomorphism in the first place. Moreover after paying more attention to the smooth classification, one realizes that the existence of an orientation preserving diffeomorphism between $L(p,q)$ and $L(p,q')$ implies that $q\equiv(q')^{\pm 1}$.

On the other hand once combined with the classification of universally tight contact structure on 3-dimensional lens spaces given by Honda in \cite{Honda}, whose proof is purely contact topological and does not rely on any classification statement on lens spaces, Theorem \ref{teoatintro} gives a pure symplectic/contact topological proof of the following classification statement. 
\begin{thm} Assume that $p$ is prime and $1<q,q'<p-1$. If $L(p,q)$ and $L(p,q')$ are diffeomorphic, then $q\equiv \pm(q')^{\pm 1}$. 
\end{thm}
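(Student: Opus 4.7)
The plan is to combine Honda's classification of universally tight contact structures on lens spaces with Theorem \ref{teoatintro}. Given a diffeomorphism $\phi\colon L(p,q)\to L(p,q')$, the task reduces to upgrading $\phi$ to a positive contactomorphism between the standard contact structures, while tracking the sign ambiguities that produce the $\pm$ in the conclusion.

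First, I would reduce to the case of an orientation-preserving $\phi$. The real-linear map $F\colon\mathbb{C}^2\to\mathbb{C}^2$, $(z_1,z_2)\mapsto(\bar{z}_1,z_2)$, is orientation-reversing, and a short computation shows that it intertwines the generating $\mathbb{Z}_p$-action of $L(p,q')$ with that of $L(p,p-q')$, so it descends to an orientation-reversing smooth diffeomorphism $\bar F\colon L(p,q')\to L(p,p-q')$. If $\phi$ is orientation-reversing, I replace it with $\bar F\circ\phi$, at the cost of swapping $q'$ for $-q'\bmod p$. Hence it suffices to prove $q\equiv(q')^{\pm 1}$ whenever $\phi$ is orientation-preserving; the general conclusion $q\equiv\pm(q')^{\pm 1}$ will follow.

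Assuming $\phi$ is orientation-preserving, the pulled-back structure $\phi^*\xi'$ on $L(p,q)$ is co-oriented, induces the same ambient orientation as $\xi$, and is universally tight: lifting $\phi$ to a diffeomorphism $\tilde\phi\colon S^3\to S^3$ of universal covers gives $\texttt{p}^{\,*}\phi^*\xi'=\tilde\phi^{\,*}\texttt{p}'^{\,*}\xi'=\tilde\phi^{\,*}\xi_0$, which is tight. By Honda's classification, $L(p,q)$ carries exactly two co-oriented isotopy classes of universally tight contact structures inducing the standard orientation, namely $\xi$ and its conjugate $\bar\xi$, so $\phi^*\xi'$ is co-orientedly isotopic to one of them. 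To collapse these two possibilities, I invoke the involution $\tau\colon L(p,q)\to L(p,q)$ descended from the complex conjugation $c(z_1,z_2)=(\bar{z}_1,\bar{z}_2)$ on $\mathbb{C}^2$: one checks $c\sigma c^{-1}=\sigma^{-1}$ (so $c$ descends) and $c^*\alpha_0=-\alpha_0$ (so $\tau^*\alpha=-\alpha$). Thus $\tau$ is an orientation-preserving co-oriented contactomorphism from $(L(p,q),\xi)$ to $(L(p,q),\bar\xi)$, and replacing $\phi$ by $\phi\circ\tau$ if necessary, I may assume $\phi^*\xi'$ is co-orientedly isotopic to $\xi$ itself.

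Gray stability then produces a diffeomorphism $\psi\colon L(p,q)\to L(p,q)$ isotopic to the identity with $(\phi\circ\psi)^*\xi'=\xi$ as co-oriented plane fields; equivalently, $(\phi\circ\psi)^*\alpha'=f\alpha$ for a strictly positive function $f$. Thus $\phi\circ\psi$ is a positive contactomorphism $(L(p,q),\xi)\to(L(p,q'),\xi')$, and Theorem \ref{teoatintro} yields $q\equiv(q')^{\pm 1}$. Combined with the orientation reduction from the first step, this gives $q\equiv\pm(q')^{\pm 1}$. The argument leans on two external inputs---Honda's classification and Theorem \ref{teoatintro}---which are invoked as black boxes; the remaining work is essentially formal, and the only delicate point is the careful matching of orientations and co-orientations, in particular the verification that the descended conjugation $\tau$ does exchange the two universally tight isotopy classes while preserving the ambient orientation.
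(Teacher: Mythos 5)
Your proposal is correct and follows essentially the same route as the paper: reduce to an orientation-preserving diffeomorphism via an explicit orientation-reversing map between lens spaces, invoke Honda's classification of universally tight contact structures together with the conjugation involution (descended from $(z_1,z_2)\mapsto(\bar z_1,\bar z_2)$) and Gray stability to upgrade the diffeomorphism to a positive contactomorphism, and then apply Theorem \ref{teoatintro}. The only cosmetic differences are that the paper performs the orientation reduction on the source $L(p,q)$ rather than the target, and separately distinguishes the two universally tight classes $(\xi,d\alpha)$ and $(\xi,-d\alpha)$ by an explicit Euler-class computation.
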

\begin{proof} We orient $L(p,q)$ and $L(p,q')$ via the volume forms $\alpha\wedge d\alpha$ and $\alpha'\wedge d\alpha'$ respectively, where $\alpha$ and $\alpha'$ are the standard contact forms. Let $\varphi: L(p,q)\rightarrow L(p,q')$ be an orientation preserving diffeomorphism. We put $\beta:=\varphi^*\alpha'$. Since $\varphi$ is orientation preserving 
$\beta \wedge d\beta$ is a positive volume form on $L(p,q)$ and consequently $(\ker \beta, d\beta)$ is a positive contact form in the sense of \cite{Honda}. It is also clear that $\ker \beta$ is universally tight. By Proposition 5.1 in \cite{Honda}, there are precisely two universally tight positive contact structures on $L(p,q)$ since $q\neq p-1$. We note that $(\xi, d\alpha)$ and $(\xi, -d\alpha)$ are two positive universally tight contact structures on $L(p,q)$ and they are not isotopic as oriented contact structures since they are distinguished by their Euler classes. In fact an easy computation shows that the Poincar\' e dual of the Euler class of $(\xi, d\alpha)$ is given by $(q+1)[\overline{\gamma}_0]\in H_1(L(p,q),\mathbb{Z})$. Note that the Poincar\' e dual of the Euler class of $(\xi, -d\alpha)$ is then given by $(-q-1)[\overline{\gamma}_0]$ and  $q+1\not\equiv -q-1$ since $q\neq p-1$.  Hence $(\ker \beta, d\beta)$ is isotopic to either $(\xi, d\alpha)$ or $(\xi, -d\alpha)$ as positive contact structures. Hence we either have a positive contactomorphism 
$$\psi^+:\left(L(p,q),\xi= \ker \alpha\right)\rightarrow\left(L(p,q), \ker \beta\right),$$
or a positive contactomorphism 
$$\psi^-:\left(L(p,q),\xi= \ker (-\alpha)\right)\rightarrow\left(L(p,q),\ker \beta\right).$$
In the first case, $\varphi\circ \psi^+$ is the desired positive contactomorphism. In the second case  we consider the diffeomorphism $\psi$ of $L(p,q)$, which is induced by the map $(z_1,z_2)\rightarrow(\overline{z}_1,\overline{z}_2)$ on $S^3$. It is easy to see that $\psi^*\alpha=-\alpha$ and therefore $\varphi\circ \psi^+\circ \psi$ is a positive contactomorphism. By Theorem \ref{teoatintro} we get $q\equiv (q')^{\pm 1}$ in either case. 

Now we assume that  $\varphi: L(p,q)\rightarrow L(p,q')$ is an orientation reversing diffeomorphism. Then we consider the diffeomorphism $\psi: L(p,-q)\rightarrow L(p,q)$, which is induced by the map $(z_1,z_2)\rightarrow(z_1,\overline{z}_2)$ on $S^3$. It is easy to see that $\psi$ is orientation reversing. Hence $\varphi\circ\psi$ is an orientation preserving diffeomorphism. Applying the above argument to $\varphi\circ \psi$ leads to $-q\equiv (q')^{\pm 1}$. 
\end{proof}

The study of holomorphic curves in the symplectization of lens spaces with respect to the Morse-Bott data is not new. We note that these curves can be studied directly as orbicurves in the orbit space $\overline{\mathbb{C}P^1}$ paired with meromorphic sections of  orbibundles. This leads to a formal relation between the orbifold Gromov-Witten potential of the orbit space $\overline{\mathbb{C}P^1}$ and the SFT hamiltonian of $L(p,q)$, see \cite{Rossi}. Compared to \cite{Rossi} our approach is rather elementary and serves us well since we do not aim to compute the SFT hamiltonian. We just provide a computational scheme and applying this scheme to a small portion of the SFT hamiltonian, namely pair of pants with positive non-contractible ends, gives us enough information to prove the classification statement.

Another work that aligns with our paper is presented in \cite{Chen}. In \cite{Chen} it is proven that if there is a symplectic homology cobordism between two lens spaces, which are equipped with the standard contact structures, then the cobordism is necessarily trivial. This statement is an application of the intersection theory for pseudo-holomorphic curves in 4-dimensional symplectic orbifolds, which is the main result of \cite{Chen}. Given a symplectic homology cobordism between two lens spaces, one compactifies both ends of the symplectic cobordism in a particular way and studies a moduli space of holomorphic spheres with certain properties in the resulting closed symplectic orbifold. The moduli space turns out to be a closed 2-dimensional orbifold and one shows that after suitably compactified, the total space of a particular line bundle over this moduli space is diffeomorphic to the compactification of the symplectic cobordism by means of an evaluation map. On the other hand the diffeomorphism type of the compactified line bundle turns out to be the same as the compactification of the trivial cobordism over one of the given lens spaces. Although our treatment of holomorphic curves stay in the smooth category and we study punctured curves in the SFT setting, due to the Morse-Bott data the punctured curves that appear in this work can be also thought as closed curves in suitable singular compactifications of the symplectizations or symplectic cobordisms, which are essentially different than the symplectic orbifold used in \cite{Chen}. On the other hand, in order to achieve the necessary  control on the neck-stretching process, we lift punctured curves to suitable coverings and apply the classical intersection theory to the extensions of these lifts, which are now smooth curves.  

\textbf{Acknowledgements.} This paper is the outcome of both my Ph.D. study in Leipzig and my postdoctoral research in Bochum. To this end I am grateful to Matthias Schwarz, Richard Hind, Alberto Abbondandolo and Barney Bramham for their support during these periods. I thank Hansj\" org Geiges and Kai Zehmisch for pointing out the classification statement on contact structures on lens spaces. I thank Richard Siefring for sharing his work in preparation, for very helpful discussions and for reading parts of this manuscript. I also thank Felix Schm\"aschke for fruitful discussions. This work is part of a project in the SFB/TRR 191 `Symplectic Structures in Geometry,
Algebra and Dynamics', funded by the DFG.
\section{Holomorphic curves in the symplectizations of lens spaces: the general scheme}
 Let $(M,\alpha)$ be a closed, $(2n-1)$-dimensional contact manifold and let $R_\alpha$ be the associated Reeb vector field with the flow $\phi^t$, which is of \textit{Morse-Bott} type. Namely, the action spectrum of $\alpha$ is discrete and for any action value (period) $T$, $N_T:=\{x\in M \:|\: \phi^T(x)=x\}\subseteq M$ is a closed submanifold  such that the rank of $d\alpha_{|N_T}$ is locally constant and $T_xN_T=\ker (d\phi^T-{\rm id})_x$ for all $x\in N_T
$. In the case that $N_T$ consists of a single orbit 
$\gamma:\mathbb{R}\rightarrow M$, we say that $\gamma$ is \textit{non-degenerate}. We note that in this case $d\phi^T(\gamma(0))_{|\xi_{\gamma (0)}}$ does not admit 1 as an eigenvalue. We note that for any period $T$, the Reeb flow defines an $S^1$-action on $N_T$ and the resulting orbit space $S_T=N_T/S^1$ is an orbifold, which consists of a single point if $N_T$ is geometrically a single orbit.    

Given an almost complex structure $J$ on $\xi=\ker \alpha$ that is compatible with $d\alpha$, one extends it to the \textit{symplectization} $(\mathbb{R}\times M,d(e^a\lambda))$ in such a way that $J$ is invariant under translations along $\partial_a$ and $J\partial_a=R_\alpha$. 

Let $(\Sigma,j)$ be a closed Riemann surface and let $\Gamma\subset \Sigma$ be a finite set of punctures. We consider the maps that satisfy the non-linear Cauchy-Riemann equation, namely 
$$u:\Sigma\setminus\Gamma\rightarrow \mathbb{R}\times M,\; du\circ j=J\circ du.$$
We call such a map as a \emph{(punctured) pseudo-holomorphic curve} or a \emph{holomorphic curve} in short. We say a holomorphic curve $u$ is \emph{rational} if $\Sigma=\mathbb{C}P^1$. Note that if $\gamma:\mathbb{R}\rightarrow M$ is a periodic orbit of $R_\alpha$ with period $T$, then \textit{the trivial cylinder over $\gamma$}
\begin{eqnarray}\label{trivialcylinder}
u: \mathbb{R}\times S^1\rightarrow \mathbb{R}\times M,\;(s,t)\mapsto (Ts,\gamma(Tt))
\end{eqnarray}
is a holomorphic curve, where $\mathbb{R}\times S^1:=\mathbb{C}/i\mathbb{Z}$. It turns out that finiteness of a suitable notion of energy forces a pseudo-holomorphic curve to behave as meromorphic objects in complex analysis. The \textit{Hofer energy} of a holomorphic curve $u$ is defined to be
$$E(u):=\sup\limits_{f\in \mathcal{F}}\int_{\Sigma\setminus\Gamma}u^*d(e^f\alpha),$$
where $\mathcal{F}=\{f:\mathbb{R}\rightarrow (-1,1)\;|\; f'>0 \}$.  Under the Morse-Bott assumption, a holomorphic curve $u$ with finite Hofer energy converges to a trivial cylinder near a puncture unless its image is bounded near that puncture and in the latter case $u$ extends holomorphically over a puncture \cite{Bourgeois}. More precisely, for each honest puncture $z\in \Gamma$, one fixes a holomorphic coordinate chart identified with an open disk $\mathbb{D}$ centered at $z$. The puncture set splits as $\Gamma=\Gamma^+\cup \Gamma^-$ and one fixes the following cylindrical coordinates.
\begin{itemize}
\item $Z_+:=[0,+\infty)\times S^1\rightarrow \mathbb{D}\setminus \{0\}\subset\Sigma\setminus\{z\},\;(s,t)\mapsto e^{-2\pi(s+it)}$ if $z\in \Gamma^+$.
\item $Z_-:=(-\infty,0]\times S^1\rightarrow \mathbb{D}\setminus \{0\}\subset\Sigma\setminus\{z\},\;(s,t)\mapsto e^{2\pi(s+it)}$ if $z\in \Gamma^-$.
\end{itemize}
Then for every $z\in\Gamma^{\pm}$, there is a $T$-periodic Reeb orbit $\gamma$ such that 
$$u(s,t)=\exp_{(Ts,\gamma(Ts))}h(s,t),\;(s,t)\in Z_\pm$$
for $|z|$ large, where $h$ is a vector field along the trivial cylinder (\ref{trivialcylinder}) such that $h(s,.)\rightarrow 0$ uniformly as $|s|\rightarrow \infty$ and $\exp$ is the exponential map defined via an $\mathbb{R}$-invariant metric on $\mathbb{R}\times M$. We say that 
\begin{itemize}
    \item $z$ is a \textit{positive puncture} if $z\in \Gamma^+$. We write  $u(z)=(+\infty,\gamma)$ if the the asymptotic end is a non-degenerate orbit $\gamma$ and $u(z)\in \{+\infty\}\times S_T$ if the asymptotic end belongs to a non-trivial orbit space $S_T$.
    
    \item $z$ is a \textit{negative puncture}  if $z\in \Gamma^-$. We write  $u(z)=(-\infty,\gamma)$ if the the asymptotic end is a non-degenerate orbit $\gamma$ and $u(z)\in \{-\infty\}\times S_T$ if the asymptotic end belongs to a non-trivial orbit space $S_T$.  
\end{itemize}

We want to study the moduli space of holomorphic curves for a fixed asymptotic data. We pick a collection of orbit spaces $S^\pm_1,...,S^\pm_{n^\pm}$ and let 
\begin{eqnarray}\label{generalmoduli}
\mathcal{M}=\left\{(\Sigma,j,\Gamma,u)\;|\; u(z^\pm_i)\in\{\pm \infty\}\times S^\pm_i \;\; {\rm for }\;{\rm all}\;\; z^\pm_i\in \Gamma^\pm\right\}/\sim
\end{eqnarray}
 be the moduli space of equivalence classes $[\Sigma,j,\Gamma,u]$ of  holomorphic curves, where $(\Sigma,j,\Gamma,u)\sim (\Sigma',j',\Gamma',u')$ if there exists a biholomorphism $h:\Sigma \rightarrow \Sigma'$ such that $h$ restricts to a sign and ordering preserving bijection on the corresponding puncture sets and $u=u'\circ h$.
\begin{rmk}\label{paramunparam}
In what follows, we will be mostly interested in moduli spaces of rational curves with at least three punctures. We note that due to the uniqueness of the complex structure on $\mathbb{C}P^1$, when $\Sigma=\mathbb{C}P^1$ and $|\Gamma|=3+k$, $k\geq0$, the above moduli space $\mathcal{M}$ is identified with the space of pairs $(u,(z_1,...,z_k))$ where
$$u:\mathbb{C}P^1\setminus \{0,1,\infty,z_1,...,z_k\}\rightarrow \mathbb{R}\times M $$
is a holomorphic curve with prescribed sign of punctures and asymptotic ends.
\end{rmk}

In general it is hard to carry a hand on study of moduli spaces of holomorphic curves. Under nice circumstances, a generic choice of $J$ on $\xi$ leads to a smooth structure on the moduli space. But such a generic choice makes hard to grasp the moduli space itself even in the Morse-Bott case. But in certain perfectly symmetric Morse-Bott situations like pre-quantization bundles, these moduli spaces can be described as rather elementary objects \cite{Bourgeois}.

The aim of this section is to study holomorphic curves in $\mathbb{R}\times L(p,q)$, where the setting is a finite quotient of the perfect Morse-Bott setting on $\mathbb{R}\times S^3$. Our strategy is to study curves in  $\mathbb{R}\times L(p,q)$  through their lifts to $\mathbb{R}\times S^3$.  We want to describe spaces of solutions of the Cauchy-Riemann equation on $\mathbb{R}\times L(p,q)$ as the subspaces of equivariant solutions of lifted problems on $\mathbb{R}\times S^3$. To this end we need first to understand the setting of $\mathbb{R}\times S^3$. 

With real coordinates $z_j=x_j+iy_j$, (\ref{alpha0}) reads as
$$\alpha_0=-y_1dx_1+x_1dy_1-y_2dx_2+x_2dy_2.$$
The symplectic form $d\alpha_0$ on $\xi_0$ given by
$$d\alpha_0=2(dx_1\wedge dy_1+dx_2\wedge dy_2)|_{\xi_0}.$$
The principal $S^1$-bundle given by (\ref{hopf}) is in fact the $S^1$-bundle associated to the tautological line bundle 
\begin{eqnarray}\label{tautbundle}
L\rightarrow \mathbb{C}P^1,\; L_{(z_1:z_2)}={\rm span}_\mathbb{C}\{(z_1,z_2)\}\subset \mathbb{C}^2
\end{eqnarray} 
and the hermitian metric on $L$, which is induced by 
the standard hermitian metric on $\mathbb{C}^2$. Hence the Euler class of (\ref{hopf}) is given by
$$e(\pi)=c_1(L)=-[\pi^{-1}\omega_{FS}].$$  
where $\omega_{FS}$ is the Fubini-Study form on $\mathbb{C}P^1$ s.t. $<\omega_{FS}, [\mathbb{C}P^1]>=\pi$ and
$$\pi^*\omega_{FS}=(dx_1\wedge dy_1+dx_2\wedge dy_2)|_{S^3}.$$

Since the $S^1$-action is generated by the Reeb field $R_0$ and the period is $2\pi$, being the associated contact form, $\alpha_0$ satisfies 
$\mathcal{L}_{R_0}\alpha_0=0$ and $\alpha_0(R_0)=1$. So $\alpha_0$ is a connection 1-form and from the equality above we have $\pi^*\omega_{FS}=\frac{1}{2}d\alpha_0$, 
that is $2\omega_{FS}$ is the curvature form. 

Consider the symplectization $\mathbb{R}\times S^3$. We pull the standard complex structure on $\mathbb{C}P^1$ back to the contact distribution
as an $S^1$-invariant complex structure via the map (\ref{hopf}) and we extend it to the symplectization as described above. We call this almost complex structure as \textit{the standard almost complex structure} and denote it by $J_0$. We note that the extension $\pi:\mathbb{R}\times S^3\rightarrow \mathbb{C}P^1$
of (\ref{hopf}) is $J_0$- holomorphic.  We consider the diffeomorphism
\begin{eqnarray}\label{bigidentifi}
\Phi: \mathbb{R}\times S^3\rightarrow L^*,\; (a,(z_1,z_2))\mapsto e^a(z_1,z_2) 
\end{eqnarray}
where $L^*$ is the total space of the line bundle without the zero section.  
We note that once conjugated by $\Phi$, $J_0$ coincides the complex structure on the fibres of $L$. Since $\Phi$ also covers holomorphic bundle projections on both its domain and target, it is a biholomorphism. 

For any punctured curve 
$$u:\Sigma \setminus\Gamma \rightarrow \mathbb{R}\times S^3$$  
with finite energy, the holomorphic map $c=\pi\circ u$ extends over the punctures and 
gives a closed holomorphic curve in $\mathbb{C}P^1$. The map $u$ then corresponds to
a meromorphic section $f$ of the holomorphic line bundle $c^*L\rightarrow \Sigma$, see \cite{Bourgeois}. The positive ends of $u$ correspond to the poles of $f$ and negative
ends of $u$ corresponds to the zeros of $f$ since 
the complex structure on the symplectization fits to the complex structure on the fibres of $L$. 
Note that the first Chern number of the bundle $c^*L$ is given by $-d$ where $d\geq 0$ is the degree of the map $c$. If $\Sigma=\mathbb{C}P^1$ then necessary and sufficient condition for the existence of a meromorphic section $f$ is both the divisor of the section and the bundle to have the same degree, namely
$$\#f^{-1}(0)-\#f^{-1}(\infty)=-d.$$
We note that the above formula says that there is no punctured curve with only negative ends, which is consistent with the maximum principle.

We now want to add the $\mathbb{Z}_p$ action into the setting. 
This action is free on $L^*$ but has two fixed points on the zero section $\mathbb{C}P^1$, see (\ref{sigmaflat}).
Note that the standard almost complex structure $J_0$ on $\mathbb{R}\times S^3$ is $\mathbb{Z}_p$-invariant so 
we identify $\mathbb{R}\times L(p,q)$ with $\overline{L}^*:=L^*/\mathbb{Z}_p$ where the former space is equipped with 
the quotient almost complex structure denoted by $J_{\alpha}$. We call $J_{\alpha}$ as the \textit{standard almost complex structure on $\mathbb{R}\times L(p,q)$}.

We consider a rational $J_{\alpha}$- holomorphic curve $$\overline{u}:\mathbb{C}P^1\setminus\Gamma \rightarrow \mathbb{R}\times L(p,q)\cong \overline{L}^*.$$
Such a curve lifts to the cover $L^*$ if and only if 
$\overline{u}_*=0$ on $\pi_1(\mathbb{C}P^1\setminus\Gamma)$. We note that $\mathbb{C}P^1\setminus\Gamma$ is homotopy equivalent to the bouquet of 
$(\#\Gamma-1)$-many 
circles so its fundamental group is the free group with $(\#\Gamma-1)$ generators. 
Hence the image of the fundamental group is trivial if and only if the image of each generator is trivial, 
that is, if all of the asymptotics are contractible. In this paper, our main concern is about curves with non-contractible ends. But since a curve $\overline{u}$ with a non-contractible end does not lift 
to the cover immediately, we need to 
precompose the map $\overline{u}$ with a suitable covering map 
\begin{eqnarray}\label{fracp}
\mathfrak{p}: \Sigma\setminus\tilde{\Gamma}\rightarrow \mathbb{C}P^1\setminus\Gamma.
\end{eqnarray}
Once we pick a suitable cover (\ref{fracp}), we have the  
commutative diagram given by Figure \ref{commute1}.
The problem is then to study the lifted $J_0$- holomorphic curves 
$$u:\Sigma\setminus\tilde{\Gamma}\rightarrow \mathbb{R}\times S^3\cong L^*$$
which are equivariant with respect to  the action of the group of Deck transformations $G$ on the punctured surface
$\Sigma\setminus\tilde{\Gamma}$ and $\mathbb{Z}_p$-action on $L^*$.
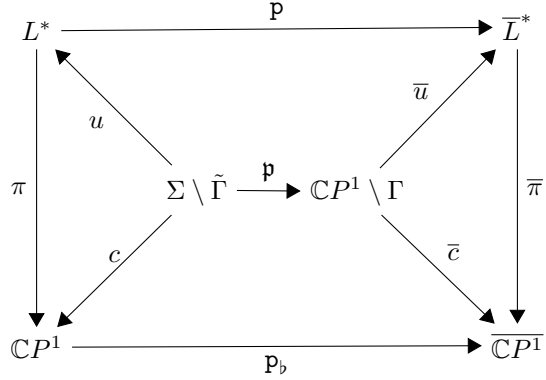
\begin{figure}
\begin{tikzpicture}[>=triangle 60,]
  \matrix[matrix of math nodes,column sep={60pt,between origins},row
    sep={60pt,between origins},nodes={rectangle}] (s)
  {
    |[name=A]|  L^* & & &|[name=D]| \overline{L}^* \\ 
    & |[name=L]| \Sigma\setminus\tilde{\Gamma} &|[name=K]| \mathbb{C}P^1\setminus\Gamma & \\ 
    |[name=B]| \mathbb{C}P^1 & & &|[name=E]| \overline{\mathbb{C}P^1}  \\
  };
  \draw[->] (A) edge node[auto] {\(\texttt{p}\)} (D)
	    (A) edge node[left] {\(\pi\)} (B)
	    (B) edge node[below] {\(\texttt{p}_\flat\)} (E)
	    (D) edge node[auto] {\(\overline{\pi}\)} (E)
	    (L) edge node[auto] {\(u\)} (A)
	    (L) edge node[above] {\(c\)} (B)
	    (L) edge node[auto] {\(\mathfrak{p}\)} (K)
	    (K) edge node[auto] {\(\overline{u}\)} (D)
	    (K) edge node[auto] {\(\overline{c}\)} (E)
  ;
\end{tikzpicture}
    \caption{Lifting diagram for a holomorphic curve in $\mathbb{R}\times L(p,q)$.}
    \label{commute1}
\end{figure}
\subsection{Equivariant curves: the necessary conditions for the existence}
In this section, our aim is to determine a suitable minimal covering (\ref{fracp}) and establish a 
correspondence between a given moduli problem in $\overline{L}^*$ and a lifted moduli 
problem in $L^*$, which is determined by (\ref{fracp}).

We fix a sets of punctures $\Gamma\subset \mathbb{C}P^1$, which partitions as follows.
\begin{itemize}
    \item $\Gamma=\Gamma_{nc}\cup \Gamma_c$ with cardinalities $n_{nc}$ and $n_{c}$.
    \item $\Gamma_{nc}=\Gamma_0\cup \Gamma_{\infty}$ with cardinalities $n_0$ and $n_\infty$ so that $n_{nc}=n_0+n_\infty$. 
    \item $\Gamma_0=\Gamma^{+}_0\cup \Gamma^{-}_0$ with cardinalities $n^{+}_{0}$ and $n^{-}_{0}$ so that 
    $n_0=n^{+}_{0}+n^{-}_{0}$. 
    \item $\Gamma_\infty=\Gamma^+_{\infty}\cup \Gamma^{-}_{\infty}$ with cardinalities $n^{+}_{\infty}$ and $n^{-}_{\infty}$ so that $n_\infty=n^{+}_{\infty}+n^{-}_{\infty}$.
    \item $\Gamma_c=\Gamma_c^+\cup \Gamma_c^-$ with cardinalities 
    $n^+_c$ and $n_c^-$ so that $n_c=n_c^++n_c^-$.
    \item $\Gamma^{\pm}_{0}=\{z^{0,\pm}_1,...,z^{0,\pm}_{n^\pm_0}\}$, $ \Gamma^{\pm}_{\infty}=\{z^{\infty,\pm}_1,...,z^{\infty,\pm}_{n^\pm_{\infty}}\}$ and $\Gamma^\pm_c=\{w^\pm_1,...,w^\pm_{n_c^\pm}\}$.
\end{itemize}


Suppose that we have a holomorphic curve 
\begin{eqnarray}\label{ubarwithends}
\overline{u}:
\mathbb{C}P^1\setminus\Gamma
\rightarrow \overline{L}^*\cong \mathbb{R}\times L(p,q)
\end{eqnarray}
with asymptotics
\begin{enumerate}[label=\textrm{(a\arabic*)}]
    \item \label{ncat0}$\overline{u}(z^{0,\pm}_i)=(\pm\infty, k^{0,\pm}_i\overline{\gamma}_0)$ where $k^{0,\pm}_i\not\equiv 0$ for $i=1,...,n^\pm_0$, that is $\overline{u}$ has a positive/negative puncture at $z^{0,\pm}_i$ with positive/negative non-contractible asymptotic end $k^{0,\pm}_i\overline{\gamma}_0$ for $i=1,...,n^\pm_0$.
    \item \label{ncatinfty}$\overline{u}(z^{\infty,\pm}_i)=(\pm\infty, k^{\infty,\pm}_i\overline{\gamma}_{\infty})$ where $k^{\infty,\pm}_i\not\equiv 0$ for $i=1,...,n^\pm_\infty$, that is $\overline{u}$ has a positive/negative puncture at $z^{\infty,\pm}_i$ with positive/negative non-contractible asymptotic end $k^{\infty,\pm}_i\overline{\gamma}_{\infty}$ for $i=1,...,n^\pm_\infty$.
    \item \label{cont} $\overline{u}(w^{\pm}_i)\in \{\pm \infty\}\times S^\pm_{k^\pm_i}$ for $i=1,...,n^\pm_c$, that is $\overline{u}$ has a positive/negative puncture at $w^{\pm}_i$ with positive/negative contractible asymptotic end, which lies in the orbit space $S^\pm_{k^\pm_i}$ of orbits with action $2\pi k^\pm_i$.
\end{enumerate}

We fix a point $z\in\mathbb{C}P^1\setminus \Gamma$ and consider the map 
$$\overline{u}_*:\pi_1(\mathbb{C}P^1\setminus \Gamma,z)\rightarrow \pi_1(\overline{L}^*, x_{\overline{u}})\cong\mathbb{Z}_p,$$
where $x_{\overline{u}}=\overline{u}(z)$.
Since $p$ is prime It is clear that $\overline{u}_*$ is surjective if and only if  $n_{nc}\geq 1$. 
We know that $K:=\ker\,\overline{u}_*$ is a normal subgroup of $\pi_1(\mathbb{C}P^1\setminus \Gamma,z)$ and there exists  a covering space 
$$\mathfrak {p}:\Sigma \setminus\tilde{\Gamma}\rightarrow \mathbb{C}P^1\setminus \Gamma,$$
where $\Sigma \setminus\tilde{\Gamma}$ is smooth punctured surface and
$\mathfrak{p}_*(\pi_1(\Sigma \setminus\tilde{\Gamma},\tilde{z}))=K$ where $\tilde{z}$ is a fixed lift of $z$.
This covering is (up to isomorphism) determined by $K$ and the group of Deck transformations $G$ is given by 
\begin{eqnarray}\label{deck}
G\cong \pi_1(\mathbb{C}P^1\setminus \Gamma,z)/K\cong \mathbb{Z}_p.
\end{eqnarray}
We endow $\Sigma \setminus\tilde{\Gamma}$ with the pull-back complex structure  and get a punctured Riemann surface so 
that 
$\mathfrak{p}$ is holomorphic. By the removal of singularities theorem, $\mathfrak{p}$ extends to a holomorphic branched covering
$\mathfrak{p}:\Sigma\rightarrow \mathbb{C}P
^1$.

We note that $\overline{u}\circ \mathfrak {p}:\Sigma \setminus\tilde{\Gamma}\rightarrow \overline{L}^*$ satisfies 
$$(\overline{u}\circ \mathfrak {p})_*(\pi_1(\Sigma \setminus\tilde{\Gamma},\tilde{z}))=\overline{u}_*\mathfrak {p}_*(\pi_1(\Sigma \setminus\tilde{\Gamma},\tilde{z}))=\{0\}=p_*(\pi_1(L^*,\tilde{x}_{\overline{u}}))$$
where $\tilde{x}_{\overline{u}}$ is a fixed lift of $x_{\overline{u}}$. Hence we have the unique lift 
\begin{eqnarray}\label{thelift}
u:=\widetilde{\overline{u}\circ \mathfrak {p}}:\Sigma\setminus\tilde{\Gamma}\rightarrow L^*,
\; u(\tilde{z})=\tilde{x}_{\overline{u}}.
\end{eqnarray}
We note that $u$ is equivariant with respect to the action of $G$ on $\Sigma \setminus\tilde{\Gamma}$ and the action of 
$\mathbb{Z}_p$ on $L^*$. We fix a generator $\tau_\flat$ of $G$ such that
\begin{eqnarray}\label{tauflat}
u\circ\tau_\flat=\sigma\circ u.
\end{eqnarray}
Now we take a closer look at the branched covering $\mathfrak{p}:\Sigma\rightarrow \mathbb{C}P^1$. 
It is clear that $\tilde{\Gamma}=\mathfrak{p}^{-1}(\Gamma)$. We put 
$\tilde{\Gamma}_{nc}:=\mathfrak{p}^{-1}(\Gamma_{nc})$
and $\tilde{\Gamma}_{c}= \mathfrak{p}^{-1}(\Gamma_c)$.
\begin{lem} \label{genuslemma}Each point in $\Gamma_{nc}$ is a branch point with exactly one preimage and 
each point in $\Gamma_{c}$ is regular and has exactly p preimages. $\tilde{\Gamma}_{nc}$ forms the fixed point set 
of the extended action of $G$ over $\Sigma$. Moreover, the genus of $\Sigma$ is given by
\begin{eqnarray}\label{genus}
g=\frac{(p-1)(n_{nc}-2)}{2}.
\end{eqnarray}
\end{lem}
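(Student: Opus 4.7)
The plan is to analyze the ramification data of the branched cover $\mathfrak{p}:\Sigma\to\mathbb{C}P^1$ at each puncture by examining the monodromy, and then to apply Riemann--Hurwitz. First I would fix a puncture $z_*\in\Gamma$ and consider a small loop $\ell\subset\mathbb{C}P^1\setminus\Gamma$ encircling it once. Since $\mathfrak{p}$ is the regular cover associated to $K=\ker\overline{u}_*$, the number of preimages of $z_*$ equals $p/|\langle[\ell]\rangle|$ in $G\cong\mathbb{Z}_p/\overline{u}_*([\ell])$, and the common ramification index equals the order of $\overline{u}_*([\ell])$ in $\mathbb{Z}_p$. So everything reduces to computing this image.

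For a puncture in $\Gamma_c$, by assumption \ref{cont} the asymptotic end lies in a contractible orbit space, so the image of $\ell$ under $\overline{u}$ is freely homotopic to a loop that is contractible in $L(p,q)$; thus $\overline{u}_*([\ell])=0$ and $[\ell]\in K$. This forces $z_*$ to be a regular value of $\mathfrak{p}$ with exactly $p$ preimages. For a puncture in $\Gamma_{nc}$, the assumptions \ref{ncat0}--\ref{ncatinfty} say the asymptotic end is $k\,\overline{\gamma}_0$ or $k\,\overline{\gamma}_\infty$ with $k\not\equiv 0\bmod p$. Since $[\overline{\gamma}_0]$ and $[\overline{\gamma}_\infty]$ both generate $\pi_1(L(p,q))\cong\mathbb{Z}_p$ and $p$ is prime, $k$ times either generator is still a generator, so $\overline{u}_*([\ell])$ has order $p$. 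Hence $z_*$ has a unique preimage which is a branch point of ramification index $p$.

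Next I would identify the fixed locus of the extended $G$-action on $\Sigma$. On the unbranched part $\Sigma\setminus\tilde\Gamma$ the deck action is free by construction, so any fixed point must lie in $\tilde\Gamma$. Points of $\tilde\Gamma_c$ are not fixed because $G$ permutes the $p$ preimages of each point of $\Gamma_c$ freely and transitively; points of $\tilde\Gamma_{nc}$ are fixed because $G$ permutes the (unique) preimage of each point of $\Gamma_{nc}$. Thus $\tilde\Gamma_{nc}$ is exactly $\mathrm{Fix}(G)$.

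Finally, for the genus I would feed the ramification data into Riemann--Hurwitz. The cover $\mathfrak{p}:\Sigma\to\mathbb{C}P^1$ has degree $p$; above each of the $n_{nc}$ points of $\Gamma_{nc}$ lies one branch point contributing $p-1$ to the total ramification, while the points of $\Gamma_c$ contribute nothing. Hence
\begin{equation*}
2g(\Sigma)-2 \;=\; p\bigl(2g(\mathbb{C}P^1)-2\bigr) + n_{nc}(p-1) \;=\; -2p + n_{nc}(p-1),
\end{equation*}
which rearranges to $g=(p-1)(n_{nc}-2)/2$ as claimed. I do not expect a substantive obstacle here; the only subtlety is in identifying $\overline{u}_*([\ell])$ with the homotopy class of the asymptotic Reeb orbit, which is standard for finite-energy ends converging to a trivial cylinder in the Morse--Bott setting.
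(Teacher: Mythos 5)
Your proof is correct and follows essentially the same route as the paper's: identify the preimages of each puncture via the $\mathbb{Z}_p$-covering data, use the primality of $p$ to force each ramification index to be $1$ or $p$ depending on whether the asymptotic end is contractible, and conclude via Riemann--Hurwitz. The only stylistic difference is that you compute the ramification index directly from the monodromy $\overline{u}_*([\ell])$, whereas the paper phrases the same fact in terms of the local isotropy group of the extended $G$-action; these are equivalent, and your version makes the mechanism slightly more explicit.
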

\begin{proof} For any $z^{i,\pm}_j\in \Gamma_{nc}$, the cardinality of the set $\mathfrak{p}^{-1}(z^{i,\pm}_j)$ 
is either $p$ or $1$. In fact, any element in $\mathfrak{p}^{-1}(z^{i,\pm}_j)$ 
has a local isotropy group with respect to the extended action of $G$ on $\Sigma$, 
which is a subgroup of 
$G\cong \mathbb{Z}_p$. Since $p$ is prime, this subgroup is either trivial or $\mathbb{Z}_p$. 
We note also that at least one point in $\mathfrak{p}^{-1}(z^{i,\pm}_j)$ is 
a puncture of $u$ with an asymptotic, which descends to a non contractible asymptotic $\overline{u}$ 
at $z^{i,\pm}_j$. Hence at least one point in $\mathfrak{p}^{-1}(z^{i,\pm}_j)$ has 
isotropy group $\mathbb{Z}_p$ 
and therefore  $\mathfrak{p}^{-1}(z^{i,\pm}_j)$ consists of a single point. 
We conclude that $\mathfrak{p}$ 
branches points over $\Gamma_{nc}$ so that each branch point has a single preimage with ramification 
number $p$. 
Similarly, for each $w^\pm_j\in \Gamma_c$  there is a contractible end of $\overline{u}$, which 
lifts to $p$-many contractible ends of $u$. Hence 
the punctures corresponding to these ends are precisely the preimages of $w^\pm_j$. 
Applying the Riemann-Hurwitz formula, we get
$$2-2g=2p-n_{nc}(p-1)\Rightarrow g=\frac{(p-1)(n_{nc}-2)}{2}.$$
It is clear that $\tilde{\Gamma}_{nc}$ the fixed point set of the extended $G$-action.
\end{proof}
We abuse the notation and call preimages of $z^{0,\pm}_i$, $z^{\infty,\pm}_i$ under $\mathfrak{p}$ by 
the same letters. For the preimages of $w^\pm_i$, which is given by $G=\langle \tau_\flat \rangle$-orbit of any point in the preimage, we write $w^\pm_{i,j}$, $j=1,..,p$. 
Now we have an equivariant curve
\begin{eqnarray}\label{theliftwithends}
u:\Sigma\setminus\tilde{\Gamma}\rightarrow L^*
\end{eqnarray}
with asymptotics
\begin{enumerate}[label=\textrm{(la\arabic*)}]
    \item \label{la1} $u(z^{0,\pm}_i)=(\pm\infty, k^{0,\pm}_i \gamma_{0})$ for $i=1,...,n_0^\pm$,
    \item \label{la2}
    $u(z^{\infty,\pm}_i)=(\pm\infty, k^{\infty,\pm}_i\gamma_\infty)$ for $i=1,...,n_\infty^\pm$,
    \item \label{la3}
    $u(w^\pm_{i,j})\in\{\pm\infty\}\times S^\pm_{k^\pm_i}$ for $i=1,...,n_c^\pm$, $j=1,...,p$, 
\end{enumerate}
where now $S^\pm_{k^\pm_i}$ denotes the space of orbits in $S^3$ with the action $2\pi k^\pm_i$.
\begin{figure}[h]
\includegraphics[scale=0.2]{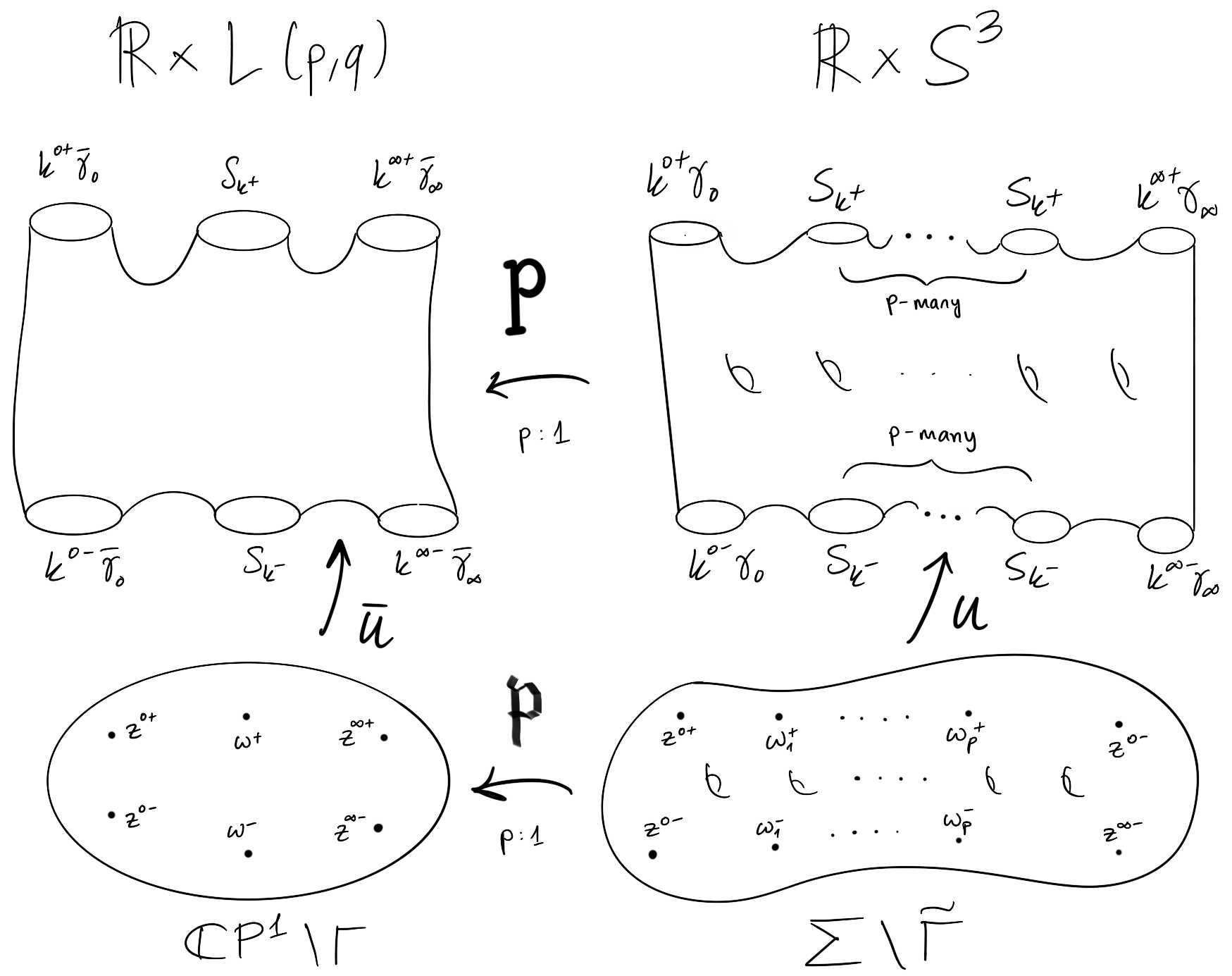}
\centering
\caption{An example of a curve in $\mathbb{R}\times L(p,q)$ and its lift to $\mathbb{R}\times S^3$.} 
\label{liftingscheme}
\end{figure}

\begin{rmk} \label{manylifts} We note the lift $u$ depends on the choice of the lift $\tilde{x}_{\overline{u}}$. In fact, 
imposing  that the lift maps $\tilde{z}$ to $\sigma^k\tilde{x}_{\overline{u}}$,
leads to the unique lift, say $u'$, which satisfies $u'=u\circ \tau_\flat^k$. More precisely, there are $p$ 
lifts of $\overline{u}$, which are distinct as maps, given by $u\circ \tau_\flat^k$, $k=0,...,p-1$ or alternatively by 
$\sigma^k\circ u$, $k=0,...,p-1$. We note that any such lift $u'$ 
satisfies $u'\circ \tau_\flat=\sigma \circ u'$. In particular the image of any lift is invariant under $\sigma$ and the images of all lifts coincide.
\end{rmk}
Now we want to understand to what extent the above description depends on $\overline{u}$. 
\begin{rmk}\label{dependence} By (\ref{genus}) the topology of $\Sigma$ depends only on the number of the 
non-contractible ends of $\overline{u}$.

Since $\pi_1(\overline{L}^*,\ast)$ is abelian the homomorphism $\overline{u}_*$ is invariant under conjugations and therefore depends only on non-contractible asymptotic ends of $\overline{u}$. In fact, given any representation of $\pi_1(\mathbb{C}P^1\setminus\Gamma, z)$ via loops, each being the concatenation of a small simple loop around a puncture and a path joining a point on it to  the base point $z$, image of a generator (associated to a puncture) under $\overline{u}_*$ is equal to the image of the generator (associated to the same puncture) represented by the small simple loop itself, which comes from a representation where the base point $z$ is taken on that very same loop. Hence $\overline{u}_*$ and therefore $K$ is determined up to conjugacy by the data \ref{ncat0} and \ref{ncatinfty}. In particular, if $\overline{u}$
is contained in a continuous family of parametrized curves, one can fix the same subgroup $K$ for the whole family. We also note that the generator $\tau_\flat$ of $G$ can be chosen constantly for such a family of curves.   
\end{rmk}
\begin{rmk} \label{cpxstr} We note that the set of punctures $\Gamma$ is fixed so far but in general, one 
needs to let punctures move. Although the covering $\mathfrak{p}$ topologically depends only 
on $n_{nc}$, the complex structure on $\Sigma$ does 
depend on the positions of the punctures in $\mathbb{C}P^1$. 
\end{rmk}

Now we want to understand the picture for \textit{the base curves}. We fix a lift $u$ given by (\ref{thelift}) 
and get a holomorphic map 
\begin{eqnarray*}
c:=\pi\circ u : \Sigma\setminus \tilde{\Gamma}\rightarrow \mathbb{C}P^1,
\end{eqnarray*}
see Figure \ref{commute1}. After extending $c$ over $\tilde{\Gamma}$, we have a closed curve 
\begin{eqnarray}\label{liftedbasecurve}
c:\Sigma\rightarrow \mathbb{C}P^1\; {\rm s.t. }\; c\circ \tau_\flat=\sigma_\flat\circ c
\end{eqnarray}
in the view of (\ref{sigmaflat}) and (\ref{tauflat}). We call such a curve $c$ as a \textit{lifted base curve}. Since $c$ is equivariant,  
one can pull back the action on the total space  of $L\rightarrow \mathbb{C}P^1$ uniquely to the total spaces of $c^*L\rightarrow \Sigma$
in such a way that the bundle map $c^*L\rightarrow L$, which covers the map $c$, is equivariant. We fix a  generator $\tau$ of this $\mathbb{Z}_p$-action on $c^*L$ so that $u$ correspond to a meromorphic section $u:\Sigma \rightarrow c^*L$ such that 
\begin{eqnarray}\label{equaivariantsectionu}
u\circ \tau_\flat=\tau \circ u.
\end{eqnarray}
Moreover, the section $u$ has poles/zeros at $z^{0,\pm}_i$'s, $z^{\infty,\pm}_i$'s and $w^\pm_{i,j}$'s. We note that 
$$c(z^{0,\pm}_i)=(0:1)=0\in \mathbb{C}P^1=\mathbb{C}\cup \{\infty\},$$
$$\; c(z^{\infty,\pm}_i)=(1:0)=\infty\in \mathbb{C}P^1=\mathbb{C}\cup \{\infty\}$$
for all $i$, which explains the notation for $z^{0/\infty,\pm}_i$.

We want to understand the consequences of (\ref{equaivariantsectionu}) in terms of the multiplicities $k_i^{0/\infty,\pm}$. To this end, we need to understand the $\langle \tau \rangle$- action on $c^*L$
around the fixed points of $\langle \tau_\flat \rangle$-action on $\Sigma$. We note that since we do not know much about the global behaviour of the map $c$, it is hard to describe $\langle \tau_\flat \rangle$-action globally. But since $c$ behaves as a monomial around any point, which is determined by the ramification number at that point, a local description is still easy to get. 

We first fix local trivializations of $L$ and compute 
the $\langle \sigma \rangle$-action on these trivializations.
\begin{itemize}

\item 
\underline{Near $0=(0:1)$}, we fix the local trivialization
\begin{eqnarray}\label{trivat0}
\tilde{\phi}_0:\mathbb{C}\times \mathbb{C}\rightarrow L,\; (z,\lambda)\mapsto \left(\lambda \frac{z}{\sqrt{1+|z|^2}}, \lambda \frac{1}{\sqrt{1+|z|^2}}\right)
\end{eqnarray}
which covers the chart $\phi_0:\mathbb{C}\rightarrow \mathbb{C}P^1,\; z\mapsto (z:1)$. Hence $\sigma$ reads as
\begin{eqnarray}\label{sigmaat0}
\sigma_0(z,\lambda):=(\tilde{\phi}_0^{-1}\sigma\tilde{\phi}_0)(z,\lambda)
=(e^{i(1-q)\theta}z, e^{iq\theta}\lambda).
\end{eqnarray}

\item 
\underline{Near $\infty=(1:0)$}, we fix the local trivialization
\begin{eqnarray}\label{trivatinfty}
\tilde{\phi}_{\infty}:\mathbb{C}\times \mathbb{C}\rightarrow L, \;(z,\lambda)\mapsto \left(\lambda \frac{1}{\sqrt{1+|z|^2}},\lambda \frac{z}{\sqrt{1+|z|^2}}\right)
\end{eqnarray}
which covers the chart $\phi_{\infty}:\mathbb{C}\rightarrow \mathbb{C}P^1, \;z\mapsto (1:z)$. Hence $\sigma$ reads as
\begin{eqnarray}\label{sigmaatinfty}
\sigma_{\infty}(z,\lambda):=(\tilde{\phi}_{\infty}^{-1}\sigma\tilde{\phi}_{\infty})(z,\lambda)
=(e^{i(q-1)\theta}z, e^{i\theta}\lambda).
\end{eqnarray}
\end{itemize}

We recall that $\tilde{\Gamma}_{nc}$ is the fixed point set of the $G$ action on $\Sigma$. 
So using the above trivializations, we may determine the local behaviour of an equivariant
meromorphic section of $c^*L$ near  $\tilde{\Gamma}_{nc}$.
\begin{itemize}

\item 
\underline{Around $z^{0,\pm}_j$:} We take holomorphic coordinates on $\Sigma$ centered at $z^{0,\pm}_j$ so that $c(z)=z^{r}$ for some $r$.  and 
we trivialize $c^*L$ above this coordinate neighbourhood using (\ref{trivat0}).  
Locally, $\tau_{\flat}=e^{im\theta}$ for some $0<m<p$. By (\ref{liftedbasecurve}) and (\ref{sigmaat0}), we have
$$c(\tau_\flat(z))=\sigma_\flat c(z)\;\Rightarrow\; e^{imr\theta}z^r=e^{i(1-q)\theta}z^r\;\Rightarrow\; m\equiv (1-q)r^{-1}.$$
Hence by (\ref{sigmaat0}),  $\tau$ reads locally as
$$\tau(z,\lambda)= (e^{im\theta}z,e^{iq\theta}\lambda).$$
For a local section $z\mapsto (z,f(z))$ of $c^*L$ to be equivariant, it has to satisfy
$$f(e^{im\theta}z)=e^{iq\theta}f(z).$$
Writing the meromorphic section $f$ as $f(z)=\sum\limits_{k} a_kz^k$ around  $z^{0,\pm}_j$, we get
$$\sum_k a_ke^{imk\theta}z^k=\sum_k a_ke^{iq\theta}z^k$$
and we note that
\begin{eqnarray}\label{kat0}
a_k\neq 0 \;\Leftrightarrow \; mk\equiv q\;\Leftrightarrow \; k\equiv qm^{-1}\;\Leftrightarrow\; k\equiv rq(1-q)^{-1}.
\end{eqnarray}

\item
\underline{Around $z^{\infty,\pm}_j$:} Similarly, we have $c(z)=z^{r}$ for some $r$ and we put 
$\tau_\flat=e^{im\theta}$ for some $0<m<p$. By the equivariance of $c$ and (\ref{sigmaatinfty}), we have
$$c(\tau_\flat(z))=\sigma_\flat c(z)\;\Rightarrow\; e^{imr\theta}z^r=e^{i(q-1)\theta}z^r\;\Rightarrow\; m\equiv (q-1)r^{-1}$$
and locally
$$\tau (z,\lambda)= (e^{im\theta}z,e^{i\theta}\lambda).$$
Hence for an equivariant meromorphic section $f=\sum\limits_{k} a_kz^k$, one gets
$$a_k\neq 0 \;\Leftrightarrow \; mk\equiv 1\;\Leftrightarrow \; k\equiv m^{-1}\;\Leftrightarrow\; k\equiv r(q-1)^{-1}.$$
\end{itemize}
The above observations lead to the following lemma.
\begin{lem}\label{localrem} Let $u$ be a lift of $\overline{u}$ and let  $c:\Sigma\rightarrow \mathbb{C}P^1$ be the corresponding lifted base curve. Let $r^{0/\infty,\pm}_i$ denote the local degree of $c$ at $z^{0/\infty,\pm}_i\in\tilde{\Gamma}_{nc}$
 and $m^{0/\infty,\pm}_i$ denote the the local representative of $\tau_\flat$ at $z^{0/\infty,\pm}_i$, that is 
 $\tau_\flat=e^{im^{0/\infty,\pm}_i\theta}$ near $z^{0/\infty,\pm}_i$. 
 Then we have the following relations.
 \begin{enumerate}
  \item for the positive punctures $z^{0/\infty,+}_i$, we have 
  \begin{itemize}
      \item $m^{0,+}_i\equiv (1-q)(r^{0,+}_i)^{-1}$ and $k^{0,+}_i\equiv r^{0,+}_i(1-v)^{-1}$,
      \item $m^{\infty,+}_i\equiv (q-1)(r^{\infty,+}_i)^{-1}$ and $ k^{\infty,+}_i\equiv r^{\infty,+}_i(1-q)^{-1}$. 
  \end{itemize}
  \item for the negative punctures $z^{0/\infty,-}_i$, we have 
  \begin{itemize}
      \item $m^{0,-}_i\equiv (1-q)(r^{0,-}_i)^{-1}$ and $ k^{0,-}_i\equiv r^{0,-}_i(v-1)^{-1}$,
      \item  $m^{\infty,-}_i\equiv (q-1)(r^{\infty,-}_i)^{-1}$ and $ k^{\infty,-}_i\equiv r^{\infty,-}_i(q-1)^{-1}$.
  \end{itemize}

 \end{enumerate}
\end{lem}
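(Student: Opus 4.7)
The derivations of $m^{0/\infty,\pm}_i$ and the congruence on the Laurent exponents of the equivariant section have in fact already been carried out in the paragraphs immediately preceding the lemma, but without tracking the sign of the puncture. The plan is therefore to organize those computations and supply the missing step: translating between the Laurent exponent of the meromorphic section $u$ and the asymptotic multiplicity $k^{0/\infty,\pm}_i$ of the orbit at that puncture.

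First, I would observe that the local derivation of $m^{0/\infty,\pm}_i$ depends only on the equivariance $c\circ\tau_\flat=\sigma_\flat\circ c$ together with the local representatives (\ref{sigmaat0}) and (\ref{sigmaatinfty}) of $\sigma_\flat$ on $\mathbb{C}P^1$; the sign of the puncture plays no role. So writing $c(z)=z^{r^{0/\infty,\pm}_i}$ in a local chart centered at $z^{0/\infty,\pm}_i$ and comparing with $\tau_\flat=e^{im^{0/\infty,\pm}_i\theta}$ yields the four $m$-formulas at once.

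Second, the equivariance $u\circ\tau_\flat=\tau\circ u$ together with the local forms of $\tau$ derived from (\ref{sigmaat0}) and (\ref{sigmaatinfty}) forces the Laurent expansion $f(z)=\sum_k a_k z^k$ of $u$, in the chosen trivialization of $c^*L$, to have its non-zero coefficients concentrated in a single residue class mod $p$: namely $k\equiv rq(1-q)^{-1}$ at punctures over $0$, and $k\equiv r(q-1)^{-1}$ at punctures over $\infty$. This again is independent of the sign of the puncture.

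The key remaining step is the translation between the leading Laurent exponent and the asymptotic multiplicity. At a positive puncture $u$ has a pole, so the relevant leading exponent $k_0$ is negative, and switching to cylindrical coordinates $z=e^{-(s+it)}$ (as in the excerpt), one sees that $f(z)\sim a_{k_0}e^{-k_0(s+it)}$ winds $-k_0$ times around the fiber as $s\to+\infty$; hence $k^{0/\infty,+}_i=-k_0$. At a negative puncture $u$ has a zero, $k_0>0$, and the analogous computation in coordinates $z=e^{s+it}$ gives $k^{0/\infty,-}_i=+k_0$. Substituting into the residue congruences above produces $k^{0,+}_i\equiv -rq(1-q)^{-1}$, $k^{\infty,+}_i\equiv -r(q-1)^{-1}$, $k^{0,-}_i\equiv rq(1-q)^{-1}$ and $k^{\infty,-}_i\equiv r(q-1)^{-1}$. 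Finally, using $v\equiv q^{-1}\pmod p$ from (\ref{defnofv}), one checks that $q(q-1)^{-1}\equiv (1-v)^{-1}$ and $q(1-q)^{-1}\equiv (v-1)^{-1}$, giving the formulas in the stated form. The only point that requires care is the sign convention relating the pole order to the winding number of the asymptotic orbit—everything else is bookkeeping of modular inverses.
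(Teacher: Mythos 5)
Your proof is correct and follows the same route as the paper: apply the preceding local computations for $m$ and the residue class of the Laurent exponents, then translate the pole/zero order into the asymptotic multiplicity with the appropriate sign and simplify via $vq\equiv 1$. You are in fact slightly more careful than the paper's own one-line proof, which contains a sign typo at $z^{\infty,-}_j$ (the Laurent series there begins at degree $k^{\infty,-}_j$, not $-k^{\infty,-}_j$).
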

\begin{proof} As a meromorphic section of $c^*L$, u satisfies 
$u\circ \tau_\flat=\tau \circ u$. We write $u$ locally as $(z,f(z))$ so that $f$ has poles at $z^{0,+}_i$/$z^{\infty,+}_i$ of order $k^{0,+}_i$/$k^{\infty,+}_i$ and has zeros at $z^{0,-}_i$/$z^{\infty,-}_i$ of order $k^{0,-}_i$/$k^{\infty,-}_i$. Hence at any $z^{0,+}_i$ or $z^{\infty,+}_i$, 
the Laurent expansions terminates at degrees $-k^{0,+}_i$ and $-k^{\infty,+}_j$ respectively, while
at any $z^{0,-}_i$ and $z^{\infty,-}_j$ at degrees $k^{0,-}_i$ and $-k^{\infty,-}_j$ respectively. Making
these adjustments in above local descriptions, we get the required relations.  
\end{proof}
\begin{rmk}\label{localbehav} We note that the relations in the above lemma, depends only on the multiplicities of 
non-contractible ends of $\overline{u}$. Hence given a moduli space of curves 
(parametrized or unparametrized) with fixed non-contractible ends, the multiplicities of these ends 
determine, in mod $p$, local degrees of lifted base curves  at fixed points of $\tau_\flat$ on $\Sigma$ and the behaviour of $\tau_\flat$ around these fixed points. 
\end{rmk}
We point out two immediate necessary conditions for the existence of $\overline{u}$ in terms of the multiplicities of its asymptotic ends. 
\begin{lem}\label{RR} Given $\overline{u}$ as with asymptotics \ref{ncat0}, \ref{ncatinfty} and \ref{cont}, one has
$$-d=\sum\limits^{n^-_{0}}_{i=1} k^{0,-}_i+\sum\limits^{n^-_{\infty}}_{i=1} k^{\infty,-}_i+p\sum\limits^{n_c^-}_{i=1}k^-_i-
 \sum\limits^{n^+_{0}}_{i=1} k^{0,+}_i-\sum\limits^{n^+_{\infty}}_{i=1} k^{\infty,+}_i-p\sum\limits^{n_c^+}_{i=1}k^+_i$$
where $d$ is the degree of $c$. 
\end{lem}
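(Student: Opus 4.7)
The plan is to apply the elementary fact that the degree of the divisor of a meromorphic section of a holomorphic line bundle equals the first Chern number of that bundle. As explained in the discussion preceding the lemma, the lift $u$ is a meromorphic section of the pulled-back bundle $c^*L\to\Sigma$, where $c=\pi\circ u$. Since $c\colon \Sigma\to\mathbb{C}P^1$ has degree $d$ and $c_1(L)=-1$, we have $c_1(c^*L)=-d$. Hence the total order of zeros of $u$ minus the total order of poles equals $-d$, and what remains is to identify each contribution to the divisor of $u$ in terms of the asymptotic multiplicities of $\overline{u}$.

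Next I would match the orders of poles/zeros with the covering multiplicities of the ends via the trivial cylinder model. Positive ends of $u$ correspond to poles of the section and negative ends to zeros. For a $k$-fold cover of a simple orbit of $R_0$ in $S^3$ (all of which have action $2\pi$), writing the trivial cylinder in the cylindrical coordinate $w=e^{-2\pi(s+it)}$ near a positive puncture and conjugating by $\Phi$ exhibits $u$ in a local trivialization of $L$ as a function behaving like $w^{-k}$; thus the pole has order exactly $k$, and analogously a negative $k$-fold end gives a zero of order $k$. This identification is independent of the local ramification index of $c$ at the puncture.

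Applying this to the asymptotic data \ref{la1}, \ref{la2}, \ref{la3} together with Lemma \ref{genuslemma}, I would bookkeep as follows: each non-contractible puncture $z^{0/\infty,\pm}_i$ of $\overline{u}$ has a unique preimage in $\Sigma$ under $\mathfrak{p}$, contributing a zero or pole of order $k^{0/\infty,\pm}_i$ to $u$; each contractible puncture $w^{\pm}_i$ of $\overline{u}$ has exactly $p$ preimages, and since $\texttt{p}$ preserves the action $2\pi k^\pm_i$ while the simple orbits of $R_0$ on $S^3$ all have action $2\pi$, each preimage is a $k^\pm_i$-fold covered end of $u$ and contributes an order $k^\pm_i$ — for a total contribution of $p\,k^\pm_i$ per contractible end of $\overline{u}$. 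Summing with signs (zeros minus poles) and equating with $c_1(c^*L)=-d$ yields exactly the identity in the statement. The only point that requires care is the multiplicity bookkeeping upstairs versus downstairs, but once the integers $k^{0/\infty,\pm}_i$ and $k^\pm_i$ are interpreted consistently as covering multiplicities in $S^3$ (i.e.\ action divided by $2\pi$), no additional obstacle arises.
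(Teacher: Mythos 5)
Your proof is correct and follows essentially the same route as the paper: lift $\overline{u}$ to the equivariant $u$ over the branched covering $\mathfrak p$, view $u$ as a meromorphic section of $c^*L$, and equate the degree of the divisor of $u$ with $\deg c^*L=-d$. You simply spell out the details (the local pole/zero orders at the trivial-cylinder ends and the branching pattern from Lemma \ref{genuslemma}) that the paper leaves implicit by referring to \ref{la1}--\ref{la3}.
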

\begin{proof}
 Let $u$ be a lift of $\overline{u}$ as in (\ref{thelift}). Then it has the asymptotics \ref{la1}-\ref{la3}. Viewing $u$ as a meromorphic section of the bundle
 $c^*L$ leads to the desired equation since the left hand side of the is the degree of the bundle $c^*L$ and the right hand side
 is the degree of the divisor of the section $u$.
\end{proof}
\begin{lem}\label{homotopy}Given $\overline{u}$ as with asymptotics \ref{ncat0}, \ref{ncatinfty} and \ref{cont}, one has
$$\sum\limits^{n^-_{0}}_{i=1} k^{0,-}_i+q\sum\limits^{n^-_{\infty}}_{i=1} k^{\infty,-}_i-
 \sum\limits^{n^+_{0}}_{i=1} k^{0,+}_i-q\sum\limits^{n^+_{\infty}}_{i=1} k^{\infty,+}_i\equiv 0.$$ 
\end{lem}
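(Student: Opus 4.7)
The plan is to read off the congruence directly from the abelianized relation among small loops in $\mathbb{C}P^1\setminus\Gamma$, exploiting that $\pi_1(\overline{L}^*)\cong\mathbb{Z}_p$ is abelian.

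First, I recall the homotopy relation $[\overline{\gamma}_\infty]=q[\overline{\gamma}_0]$ in $\pi_1(L(p,q))$. This follows from the explicit lifts to $S^3$: the simple orbit $\overline{\gamma}_\infty$ lifts to a path ending at $\sigma(1,0)$, while $\overline{\gamma}_0$ lifts to a path ending at $\sigma^v(0,1)$; so under the identification of $\pi_1(L(p,q))$ with the Deck group $\langle\sigma\rangle$, the classes $[\overline{\gamma}_\infty]$ and $[\overline{\gamma}_0]$ correspond to $\sigma$ and $\sigma^v$ respectively, and $vq\equiv 1$ gives $[\overline{\gamma}_\infty]=q[\overline{\gamma}_0]$.

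Next, I fix small loops $\delta_z$ around each puncture $z\in\Gamma$, oriented counterclockwise in a local holomorphic coordinate. In $H_1(\mathbb{C}P^1\setminus\Gamma;\mathbb{Z})$ they satisfy
$$\sum_{z\in\Gamma}[\delta_z]=0,$$
since together they bound the complement of disjoint small disks about the punctures in $\mathbb{C}P^1$. I then compute $\overline{u}_*[\delta_z]\in H_1(\overline{L}^*)\cong\mathbb{Z}_p$ from the asymptotic data \ref{ncat0}--\ref{cont}. Contractible ends contribute $0$. For non-contractible ends, the SFT cylindrical-end parametrizations $(s,t)\mapsto e^{-2\pi(s+it)}$ at positive punctures and $(s,t)\mapsto e^{2\pi(s+it)}$ at negative ones imply that, for $|s_0|$ large, the loop $t\mapsto\overline{u}(s_0,t)$ — which by the asymptotic behaviour is freely homotopic to the asymptotic orbit traversed positively with the stated multiplicity — is \emph{clockwise} around a positive puncture and \emph{counterclockwise} around a negative one. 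Hence
$$\overline{u}_*[\delta_{z^{0/\infty,+}_i}]=-k^{0/\infty,+}_i\,[\overline{\gamma}_{0/\infty}],\qquad \overline{u}_*[\delta_{z^{0/\infty,-}_i}]=+k^{0/\infty,-}_i\,[\overline{\gamma}_{0/\infty}].$$
Substituting $[\overline{\gamma}_\infty]=q[\overline{\gamma}_0]$ into the image of the above homological relation under $\overline{u}_*$, and using that $[\overline{\gamma}_0]$ generates $\mathbb{Z}_p$, yields the claimed congruence.

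The only delicate step is the sign bookkeeping — getting the orientation of $\delta_z$ right relative to the positive Reeb traversal at each puncture. Once the signs are fixed, the rest is immediate algebraic manipulation.
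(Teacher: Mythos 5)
Your proof is correct and takes essentially the same route as the paper: both rest on the observation that a rational curve forces the (signed) sum of the homology classes of its asymptotic ends to vanish, and then substitute the relation $[\overline{\gamma}_\infty]=q[\overline{\gamma}_0]$. You merely unwind that relation via small loops and the SFT cylindrical coordinates, which is a careful elaboration of the paper's one-line argument rather than a different method.
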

\begin{proof}
 Since $\overline{u}$ is a rational curve, the sum of the homotopy classes of positive ends is equal to 
 the sum of the homotopy classes of negative ends. Writing all homotopy classes in terms of $[\overline{\gamma}_0]$,
 leads to the above equation since $[\overline{\gamma}_{\infty}]=q[\overline{\gamma}_0]$ in $\pi_1$.
\end{proof}

We now have a closer look at the \textit{singular base curve} 
$$\overline{c}:=\overline{\pi}\circ\overline{u}: \mathbb{C}P^{1}\setminus \Gamma \rightarrow \overline{\mathbb{C}P^1},$$
where $\overline{\pi}:\mathbb{R}\times L(p,q)\rightarrow \overline{\mathbb{C}P^1}$ is the projection along the Reeb orbits. 
Using the extension of $c$, we get the orbicurve 
$\overline{c}: \mathbb{C}P^{1}\rightarrow \overline{\mathbb{C}P^1}$. 
We remove singularities $\{\overline{0},\overline{\infty}\}\subset \overline{\mathbb{C}P^1}$ and consider the map 
\begin{eqnarray}\label{cbarnonsingular}
\overline{c}:\mathbb{C}P^1\setminus P\rightarrow \overline{\mathbb{C}P^1}\setminus\{\overline{0},\overline{\infty}\},\; P:=\overline{c}^{-1}(\{\overline{0},\overline{\infty}\})
\end{eqnarray}
which is a holomorphic 
branched covering. One can biholomorphically identify 
$\overline{\mathbb{C}P^1}\setminus\{\overline{0},\overline{\infty}\}$ with $\mathbb{C}^*$ and 
$\overline{c}$ can be viewed as a holomorphic
branched covering of $\mathbb{C}^*$. Extending over $P$, 
we get a holomorphic branched covering
\begin{eqnarray}\label{chat}
\hat{c}:\mathbb{C}P^1\rightarrow \mathbb{C}P^1,
\end{eqnarray}
which we call as the \textit{smoothened base curve} of $\overline{u}$. We note that $P=\hat{c}^{-1}(\{0,\infty\})$ and $P$ is in general larger then $\Gamma_{nc}$. 
We put $\tilde{P}:=c^{-1}(\{0,\infty\})=\mathfrak{p}^{-1}(P)\subset \Sigma$. 
\begin{lem}
 The degrees of the maps (\ref{cbarnonsingular}), (\ref{chat}) and (\ref{liftedbasecurve}) coincide. 
\end{lem}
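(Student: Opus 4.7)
The plan is to exploit the commutative diagram of Figure \ref{commute1}, which yields the identity $\texttt{p}_\flat\circ c=\overline{c}\circ \mathfrak{p}$, together with the multiplicativity of the degree under composition of (branched) coverings. This identity itself follows from $\texttt{p}\circ u=\overline{u}\circ \mathfrak{p}$ by postcomposing with $\overline{\pi}$ and using $\overline{\pi}\circ \texttt{p}=\texttt{p}_\flat\circ \pi$; it holds on $\Sigma\setminus\tilde{\Gamma}$ and extends continuously, hence holomorphically, to all of $\Sigma$.

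First I would record that by Lemma \ref{genuslemma} the branched cover $\mathfrak{p}:\Sigma\rightarrow \mathbb{C}P^1$ restricts to an unramified $p$-fold covering over $\mathbb{C}P^1\setminus\Gamma_{nc}$, while by construction the quotient $\texttt{p}_\flat$ restricts to an unramified $p$-fold covering over $\overline{\mathbb{C}P^1}\setminus\{\overline{0},\overline{\infty}\}$. I would then pick a generic point $y\in \overline{\mathbb{C}P^1}\setminus\{\overline{0},\overline{\infty}\}$, chosen away from the critical values of both $c$ and $\overline{c}$, and count preimages of $y$ on each side of $\texttt{p}_\flat\circ c=\overline{c}\circ \mathfrak{p}$. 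The left-hand composition contributes $p\cdot\deg(c)=p\cdot d$ preimages, while the right-hand composition contributes $p\cdot\deg(\overline{c})$ preimages. Cancelling the common factor $p$ gives $\deg(\overline{c})=d$, which matches the degrees of \eqref{liftedbasecurve} and \eqref{cbarnonsingular}.

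Finally, the smoothened base curve $\hat{c}:\mathbb{C}P^1\rightarrow \mathbb{C}P^1$ is by construction the holomorphic extension of $\overline{c}$ over the finite set $P$, obtained through the biholomorphic identification $\overline{\mathbb{C}P^1}\setminus\{\overline{0},\overline{\infty}\}\cong \mathbb{C}^*$ followed by the removal-of-singularities theorem. Since extending a branched covering over a finite set does not alter its generic fibre cardinality, one immediately concludes $\deg(\hat{c})=\deg(\overline{c})=d$. The only subtlety worth flagging, though not a genuine obstacle, is keeping straight three a priori different notions of degree (mapping degrees for the closed maps $c$ and $\hat{c}$, and the sheet count of a branched covering for $\overline{c}$ on the complement of a finite set); for non-constant holomorphic maps between compact Riemann surfaces these all coincide with the number of preimages of a regular value, which is precisely what the preimage-counting argument uses.
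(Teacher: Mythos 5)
Your proposal is correct and takes essentially the same route as the paper's proof: the paper likewise argues from the identity $\overline{c}\circ\mathfrak{p}=\texttt{p}_\flat\circ c$ together with the fact that both $\mathfrak{p}$ and $\texttt{p}_\flat$ are $p:1$ coverings, and treats the equality $\deg(\overline{c})=\deg(\hat{c})$ as clear. You merely spell out the generic-preimage count and the removal-of-singularities justification that the paper leaves implicit.
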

\begin{proof} \label{degree}
It is clear that degrees of $\overline{c}$ and $\hat{c}$ coincide. The last part of the 
statement follows since $\overline{c}\circ\mathfrak{p}=\texttt{p}_\flat\circ c$ and both $\mathfrak{p}$ and $\texttt{p}_\flat$ are 
$p:1$ coverings.
\end{proof}
\begin{lem}\label{ramification} The ramification profiles of $\hat{c}$ and $c$ satisfies the followings.
\begin{enumerate}[label=\textrm{(\arabic*)}]
    \item For all $i$, the ramification number of $\hat{c}$ at $z_i^{0/\infty,\pm}\in\Gamma_{nc}$ coincides with the ramification number $r_i^{0/\infty,\pm}$ of $c$ at $z_i^{0/\infty,\pm}\in \tilde{\Gamma}_{nc}$ (see Lemma \ref{localrem}).
    \item $|\tilde{P}\setminus\tilde{\Gamma}_{nc}|=p|P\setminus\Gamma_{nc}|$.
    \item For any $z\in P\setminus \Gamma_{nc}$, the ramification numbers of $c$ at $p$-many preimages of $z$ are all the same.
    \item For any $z\in P\setminus \Gamma_{nc}$, the ramification number of $\hat{c}$ at $z$ is $p$ times the ramification number of $c$ at any preimage of $z$.
\end{enumerate}
 
\end{lem}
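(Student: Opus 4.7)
The plan is to reduce all four assertions to a single identity relating $\hat{c}$, $\mathfrak{p}$ and $c$, and then read off ramification data via the chain rule for compositions of holomorphic maps. The action $\sigma_\flat$ fixes $0$ and $\infty$ in $\mathbb{C}P^1$, and on the complement $\mathbb{C}^{*}$ it acts as the rotation $z\mapsto e^{i(1-q)\theta} z$ of order $p$ (since $p$ is prime and $1-q\not\equiv 0$). Hence $\texttt{p}_\flat$ is a $p$-fold cyclic covering that is fully ramified above the two orbifold points. After identifying $\overline{\mathbb{C}P^1}\setminus\{\overline{0},\overline{\infty}\}$ with $\mathbb{C}^{*}$ via a biholomorphism $\phi$, one can normalize $\phi$ so that the extended composition $\phi\circ\texttt{p}_\flat$ coincides with the standard $p$-th power map $\rho(z)=z^p$ on $\mathbb{C}P^1$. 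Combining this with the commutativity $\overline{c}\circ \mathfrak{p}=\texttt{p}_\flat\circ c$ from Figure \ref{commute1} then yields the key identity
$$\hat{c}\circ \mathfrak{p} \;=\; \rho\circ c.$$

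For any $w\in \Sigma$, let $r_f(x)$ denote the local degree of a holomorphic map $f$ at $x$. Multiplicativity of local degrees under composition gives
$$r_{\mathfrak{p}}(w)\cdot r_{\hat{c}}(\mathfrak{p}(w)) \;=\; r_c(w)\cdot r_{\rho}(c(w)).$$
By Lemma \ref{genuslemma}, $r_{\mathfrak{p}}(w)=p$ for $w\in\tilde{\Gamma}_{nc}$ and $r_{\mathfrak{p}}(w)=1$ otherwise, while $r_\rho(c(w))=p$ exactly when $c(w)\in\{0,\infty\}$, i.e.\ $w\in\tilde{P}$, and $r_\rho(c(w))=1$ otherwise. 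Statement (1) corresponds to the case $w\in\tilde{\Gamma}_{nc}$: both scaling factors equal $p$, so the $\hat{c}$- and $c$-ramifications coincide. Statement (4) corresponds to $w\in\tilde{P}\setminus\tilde{\Gamma}_{nc}$: here $r_\mathfrak{p}(w)=1$ and $r_\rho(c(w))=p$, forcing $r_{\hat{c}}(\mathfrak{p}(w))=p\cdot r_c(w)$. Statement (2) is immediate because $G\cong\mathbb{Z}_p$ acts freely on $\Sigma\setminus \tilde{\Gamma}_{nc}$, so each $z\in P\setminus\Gamma_{nc}$ pulls back under $\mathfrak{p}$ to a single free $G$-orbit of cardinality $p$. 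Statement (3) then follows from the equivariance $c\circ \tau_\flat=\sigma_\flat\circ c$: since $\tau_\flat$ and $\sigma_\flat$ are biholomorphisms, the local degree of $c$ is constant along each $G$-orbit.

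The only step that is not bookkeeping is the normalization $\phi\circ\texttt{p}_\flat=\rho$. One must check that the cyclic branched cover $\texttt{p}_\flat$, which has exactly one preimage of multiplicity $p$ above each of $\overline{0}$ and $\overline{\infty}$, is biholomorphic to the standard $p$-th power map. This is either immediate from the uniformization of cyclic covers of $\mathbb{C}P^1$ branched at two points, or visible by direct inspection: on $\mathbb{C}^{*}\to\mathbb{C}^{*}/\mathbb{Z}_p$ the quotient map is $z\mapsto z^p$ up to a constant rescaling, and the rescaling extends to a Möbius transformation that can be absorbed into the choice of $\phi$.
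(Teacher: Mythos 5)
Your proof is correct and follows essentially the same route the paper takes: the paper dispatches all four items by remarking that the ramification of $\hat{c}$ at $z\in P$ is the local covering number of $\overline{c}$ near $z$ and that everything then ``follows from the local description of $\overline{c}\circ\mathfrak{p}=\texttt{p}_\flat\circ c$,'' which is precisely the multiplicativity-of-local-degrees computation you carry out after normalizing $\phi\circ\texttt{p}_\flat$ to the power map $\rho(z)=z^p$. Your version is just the explicit unwinding of that one-line remark (and the normalization you flag at the end does hold, since $1-q$ is coprime to $p$ so the quotient of $\mathbb{C}^*$ by $\langle\sigma_\flat\rangle$ is biholomorphically the $p$-th power map).
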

\begin{proof} 
We note that the ramification number of $\hat{c}$ at any $z\in P$ corresponds to the
local covering number of (\ref{cbarnonsingular})
around $z$. Hence the remaining statements follow from the local description of the equation 
$\overline{c}\circ\mathfrak{p}=\texttt{p}_\flat\circ c$.  
\end{proof}

\subsection{Equivariant curves: the sufficient conditions for the existence}
In this section, we discuss the sufficient conditions for the existence of equivariant meromorphic sections of $L$. It turns out that when $n_{nc}=2$, due to Lemma \ref{genuslemma} the necessary conditions given above are also sufficient. But if $n_{nc}\geq 3$, then there may be an a priori obstruction due to the genus of $\Sigma$. We postpone the treatment of the first case to the next section and concerning the second case, we discuss the problem for $n_{nc}=3$ for the sake of presentation. We further assume $n_c=0$ since contractible ends do not essentially change the problem and we can omit the ambiguity between parametrized or unparametrized  curves, see Remark \ref{paramunparam}. 
At the end of the section, we point out the necessary
modifications for more general types of curves.
 
Let $\mathcal{M}$ denote the moduli space of $J_\alpha$- holomorphic curves 
\begin{eqnarray}\label{mathcalM}
\overline{u}:\mathbb{C}P^1\setminus \Gamma \rightarrow \overline{L}^*,\;\Gamma=\{z^{0,+},z^{\infty,+},z^{0,-} \}
\end{eqnarray}
with the asymptotics 
\begin{itemize}
\item $\overline{u}(z^{0,+})=(+\infty, k^{0,+}\overline{\gamma}_0)$,
\item $\overline{u}(z^{\infty,+})=(+\infty, k^{\infty,+}\overline{\gamma}_{\infty})$,
\item $\overline{u}(z^{0,-})=(-\infty, k^{0,-}\overline{\gamma}_0)$
\end{itemize}
where the necessary conditions given by Lemma \ref{homotopy} and Lemma \ref{RR} are satisfied, namely 
\begin{eqnarray}\label{nec}
d:=k^{0,+}+k^{\infty,+}-k^{0,-}>0,\; k^{0,+}+qk^{\infty,+}-k^{0,-}\equiv 0. 
\end{eqnarray}

Now given the above data, we define $\mathcal{C}_\mathcal{M}$ to be
the moduli space of curves 
\begin{eqnarray}\label{C_M}
\hat{c}:\mathbb{C}P^1\rightarrow \mathbb{C}P^1; \;\hat{c}(z^{0,+})=0,\;\hat{c}(z^{\infty,+})=\infty,\;\hat{c}(z^{0,-})=0
\end{eqnarray}
such that 
\begin{enumerate}[label=\textrm{(cm\arabic*)}]
 \item \label{CM1}
 The degree of $\hat{c}$ is $d$.
 \item \label{CM2}
 The ramification numbers of $\hat{c}$ satisfies \begin{itemize}
     \item at $z^{0,+}$: $r^{0,+}\equiv k^{0,+} (1-v)$
     \item at $z^{0,-}$: $r^{0,-}\equiv k^{0,-}(v-1)$
     \item st $z^{\infty,+}$: $r^{\infty,+}\equiv k^{\infty,+}(1-q)$
     \item for any $z\in P\setminus \Gamma$, the ramification number at $z$ is divisible by $p$, where $P:=\hat{c}^{-1}\{0,\infty\}$.
 \end{itemize}
\end{enumerate}
Given the moduli problem $\mathcal{M}$, we fix the covering $(\Sigma,\tilde{\Gamma},\mathfrak{p})$, which exists even if $\mathcal{M}$ is empty, see Remark (\ref{dependence}). 
We note that for each $\overline{u}\in \mathcal{M}$, 
there corresponds a smoothened base curve $\hat{c}$ and due to Lemma \ref{localrem}, Lemma \ref{degree} and 
 Lemma \ref{ramification} we know that $\hat{c}\in\mathcal{C}_\mathcal{M}$. Now the question is determine which curves in $\mathcal{C}_\mathcal{M}$ provide a curve in $\mathcal{M}$. Hence, we need to reverse the procedure given in the previous section. 
Now given $\hat{c}\in \mathcal{C}_\mathcal{M}$, it corresponds to a non-singular branched covering
$$\overline{c}:\mathbb{C}P^1\setminus P\rightarrow 
\overline{\mathbb{C}P^1}\setminus \{\overline{0},\overline{\infty}\}$$ 
where $P:= \overline{c}^{-1}(\{\overline{0},\overline{\infty}\})$.
We first need to construct the lifted base curve $c$. Namely, we should check the diagram in Figure \ref{commute2} is valid.
\begin{figure}
    \centering
\begin{tikzpicture}[>=triangle 60,]
 \matrix[matrix of math nodes,column sep={60pt,between origins},row sep={60pt,between origins},nodes={rectangle}] (s) 
 {
|[name=A]| \Sigma\setminus \tilde{P} &&    
|[name=D]| \mathbb{C}P^1\setminus\{0,\infty\} \\ 
|[name=B]| \mathbb{C}P^{1}\setminus\overline{c}^{-1}(\{\overline{0},\overline{\infty}\}) && 
|[name=E]| \overline{\mathbb{C}P^1}  \setminus \{\overline{0},\overline{\infty}\}\\
 };
\draw[->] (A) edge node[auto] {\(c\)} (D)
(A) edge node[left] {\(\mathfrak {p}\)} (B)
(B) edge node[below] {\(\overline{c}\)} (E)
(D) edge node[auto] {\(\texttt{p}_\flat\)} (E)
;
\end{tikzpicture}
    \caption{Lifting diagram for $\overline{c}$. }
    \label{commute2}
\end{figure}

\begin{lem}
 For any  $\hat{c}\in \mathcal{C}_\mathcal{M}$, the corresponding (non-singular) branched covering 
 $\overline{c}$
 lifts through $$\mathfrak{p}:\Sigma\setminus \tilde{P}\rightarrow \mathbb{C}P^1\setminus P$$ 
 where $ \tilde{P}:=\mathfrak{p}^{-1}(P)$.  
\end{lem}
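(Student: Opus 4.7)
The plan is to verify the standard covering-space lifting criterion. Under the biholomorphism $\overline{\mathbb{C}P^1}\setminus\{\overline 0,\overline\infty\}\cong\mathbb C^*$ introduced before \eqref{chat}, $\texttt{p}_\flat$ restricts to a $p$-fold holomorphic covering $\mathbb C^*\to \mathbb C^*$, so $(\texttt{p}_\flat)_*\pi_1=p\mathbb Z\subset\mathbb Z$. Consequently $\overline c\circ\mathfrak p$ admits a continuous lift (which is then automatically holomorphic, since all other maps in the diagram are holomorphic) if and only if
\[
\overline c_*\bigl(\mathfrak p_*\pi_1(\Sigma\setminus\tilde P)\bigr)\subset p\mathbb Z.
\]
By construction $\mathfrak p_*\pi_1(\Sigma\setminus\tilde P)=\ker\phi$, where $\phi:\pi_1(\mathbb{C}P^1\setminus P)\to\mathbb Z_p$ is the classifying homomorphism of the $\mathbb Z_p$-covering $\mathfrak p$; and both $\overline c_*$ (reduced mod $p$) and $\phi$ descend to homomorphisms on $H_1(\mathbb{C}P^1\setminus P)$, which is generated by the small counterclockwise loops $\gamma_z$ around the points of $P$.

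Hence it suffices to exhibit a constant $\lambda\in\mathbb Z_p$ with $\overline c_*\equiv\lambda\phi\pmod p$ on every generator $\gamma_z$, since this forces $\overline c_*$ to vanish mod $p$ on $\ker\phi$. I compute both sides directly. The homomorphism $\phi$ is exactly the would-be $\overline u_*$, and unwinding the trivial-cylinder model at a puncture (compatibly with the sign convention implicit in Lemma \ref{homotopy}) returns $\mp k^{0,\pm}$ at $z^{0,\pm}$, $-qk^{\infty,+}$ at $z^{\infty,+}$, and $0$ at each $z\in P\setminus\Gamma$ (since such a loop is already contractible in $\mathbb{C}P^1\setminus\Gamma$). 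On the other hand, since $\hat c$ is holomorphic, $\overline c_*(\gamma_z)$ is the local winding number of $\hat c$ around $\hat c(z)\in\{0,\infty\}$, namely $+r^{0,\pm}$ at $z^{0,\pm}$, $-r^{\infty,+}$ at $z^{\infty,+}$ (the minus coming from the fact that a loop around $\infty$ winds negatively around $0\in\mathbb C^*$), and $\pm s_z$ at $z\in P\setminus\Gamma$.

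Substituting the ramification congruences of \ref{CM2} together with the defining relation $vq\equiv 1$ from \eqref{defnofv} then pins down the same value $\lambda\equiv v-1\pmod p$ at each of $z^{0,+},z^{0,-},z^{\infty,+}$. For instance at $z^{\infty,+}$ one has $(v-1)\phi(\gamma_{z^{\infty,+}})=-q(v-1)k^{\infty,+}=(q-1)k^{\infty,+}\equiv -r^{\infty,+}$, and the analogous one-line checks work at $z^{0,\pm}$. At each $z\in P\setminus\Gamma$ the divisibility clause in \ref{CM2} forces $\overline c_*(\gamma_z)=\pm s_z\equiv 0\pmod p$, matching $\phi(\gamma_z)=0$. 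This establishes the congruence $\overline c_*\equiv(v-1)\phi\pmod p$ on $H_1(\mathbb{C}P^1\setminus P)$, which immediately yields the required divisibility and hence the desired lift. The only part of the argument that requires real care is tracking the sign conventions (positive vs negative puncture, winding around $0$ vs around $\infty$); the underlying arithmetic reduces at each puncture to a single application of $vq\equiv 1$.
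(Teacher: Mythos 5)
Your proof is correct and follows essentially the same route as the paper's: reduce to the lifting criterion, observe that $\rho\circ\overline c_*$ (taken mod $p$) is a constant multiple of the classifying homomorphism $\phi$ of the covering $\mathfrak p$, and verify this on generators using the congruences in \ref{CM2} and $vq\equiv 1$. You carry out the generator-by-generator check more explicitly than the paper (which only asserts it is \qq{not hard to see}), and you obtain the proportionality constant $v-1$ rather than the paper's $1-q$; this discrepancy is just a matter of the choice of generator/identification for $\pi_1(\overline{\mathbb{C}P^1}\setminus\{\overline 0,\overline\infty\})$ versus $\pi_1(\overline L^*)$ and is immaterial, since only the existence of some nonzero $\lambda$ is needed.
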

\begin{proof}
As in the previous section, we fix $z\in \mathbb{C}P^1\setminus P$
and put $w:=\overline{c}(z)$. We have the induced homomorphism
$$\overline{c}_*: \pi_1(\mathbb{C}P^1\setminus P, z)\rightarrow 
\pi_1(\overline{\mathbb{C}P^1}\setminus \{\overline{0},\overline{\infty}\},w)\cong\mathbb{Z},$$
where we fix the generator $\eta:=\{we^{it}:t\in [0,\theta]\}$ for the latter group. 
The covering 
$\texttt{p}_\flat:\mathbb{C}P^1\setminus\{0,\infty\}\rightarrow \overline{\mathbb{C}P^1}\setminus \{\overline{0},\overline{\infty}\}$ induces the monomorphism
$$\mathbb{Z}\rightarrow \mathbb{Z},\; 1\rightarrow p$$
on the fundamental group
where we fix a generator of the former group as a lift of $p\eta$. Let   
$$\rho: \mathbb{Z}\rightarrow \mathbb{Z}/ {\rm im }\:(\texttt{p}_\flat)_*= {\rm coker}(\texttt{p}_\flat)_*\cong \mathbb{Z}_p=\langle[\eta]\rangle$$
denote the the quotient homomorphism. Then 
$\overline{c}\circ\mathfrak{p}$ lifts if and only if 
\begin{eqnarray}\label{needtrivial1}
\rho\circ \overline{c}_*\circ \mathfrak{p}_*: \pi_1(\Sigma\setminus\tilde{P},\tilde{z})\rightarrow \mathbb{Z}_p
\end{eqnarray}
is trivial for some $\tilde{z}\in \mathfrak{p}^{-1}(z)$. 
We note that by the last statement of \ref{CM2} and the fact that $\mathbb{Z}_p$ is abelian, we have the following commutative diagram
\begin{center}
 \begin{tikzcd}
   \pi_1(\mathbb{C}P^1\setminus P, z) \arrow{r}{\imath_*} \arrow[swap]{dr}{\rho\circ \overline{c}_*} & \pi_1(\mathbb{C}P^1\setminus \Gamma, z)
   \arrow{d}{\rho\circ \overline{c}_*} \\
     & \mathbb{Z}_p
  \end{tikzcd}
\end{center}
where the upper horizontal arrow is induced by the inclusion $\imath:\mathbb{C}P^1\setminus P\hookrightarrow \mathbb{C}P^1\setminus \Gamma$. Combining this with the with the commutative diagram induced by
\begin{center}
\begin{tikzcd}
\Sigma\setminus \tilde{P} \arrow{r}{\tilde{\imath}} \arrow[swap]{d}{\mathfrak{p}} & \Sigma\setminus \tilde{\Gamma} \arrow{d}{\mathfrak{p}} \\
\mathbb{C}P^1\setminus P  \arrow{r}{\imath} & \mathbb{C}P^1\setminus \Gamma
\end{tikzcd}
\end{center}
we conclude that if 
\begin{eqnarray}\label{needtrivial2}
\rho\circ \overline{c}_*\circ \mathfrak{p}_*: \pi_1(\Sigma\setminus\tilde{\Gamma},\tilde{z})\rightarrow \mathbb{Z}_p
\end{eqnarray}
vanishes then (\ref{needtrivial1}) vanishes as well. 
Once we fix an isomorphism $\varphi: \pi_1(\overline{L}^*)\rightarrow {\rm coker}\: (\texttt{p}_\flat)_*\cong \mathbb{Z}_p$ such that 
$\varphi ([\overline{\gamma_{0}}])=[\eta]$, then it is not hard to see that by \ref{CM2}, $\rho\circ \overline{c}_*$ coincides with the homomorphism $\pi_1(\mathbb{C}P^1\setminus \Gamma,z)\rightarrow \pi_1(\overline{L}^*)$ determined by $\mathcal{M}$ (see Remark \ref{dependence}) up to multiplication by a $(1-q)$. But the kernel of the latter homomorphism is precisely the image of $\mathfrak{p}_*$. Hence (\ref{needtrivial2}) is trivial.   
\end{proof}
\begin{rmk}\label{basemanylifts}
 As in Remark \ref{manylifts}, there are $p$- many lifts of given $\overline{c}$. Once we fix a 
 lift $c$, the other lifts are given by $\sigma_\flat^k \circ c$, $k=1,...,p-1$.
\end{rmk}
Given $\hat{c}\in \mathcal{C}_\mathcal{M}$, we consider the extension $c:\Sigma\rightarrow \mathbb{C}P^1$ of a lift 
of $\overline{u}$ such that 
$c\circ\tau_\flat=\sigma_\flat\circ c$, where $\tau_\flat$ is a fixed generator of the group $G$ acting on $\Sigma$. 
The question is now to determine whether
there is a meromorphic section $u: \Sigma \rightarrow c^*L$ such that $u\circ\tau_\flat=\tau\circ u$ where $\tau$ is the corresponding generator of the $\mathbb{Z}_p$ action on $c^*L$ and  zeros and poles of $u$ are given by the moduli problem $\mathcal{M}$. Namely we ask for a pole at $z^{0,+}$ of order $k^{0,+}$, a pole at 
$z^{\infty,+}$ of order $k^{\infty,+}$ and a zero at $z^{0,-}$ of order $k^{0,-}$ 
(as in the previous section, $z^{i,\pm}$ denotes the punctures in $\tilde{\Gamma}_{nc}$ 
corresponding to punctures in $\Gamma_{nc}$). 
\begin{lem} Assume that there is meromorphic section $u$ of $c^*L$ with 
zeros and poles determined by $\mathcal{M}$ and $c$ is equivariant. Then $u$ is also equivariant.
\end{lem}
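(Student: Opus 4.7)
The plan is to study the twisted section $u' := \tau^{-1}\circ u \circ \tau_\flat$ and to show that it agrees with $u$. Since $c$ is equivariant, $\tau$ is a well-defined bundle automorphism of $c^*L$ covering $\tau_\flat$ on $\Sigma$, and hence $u'$ is again a meromorphic section of $c^*L$. The strategy is then in three steps: (i) check that $u$ and $u'$ have the same divisor; (ii) conclude that $u' = \lambda u$ for some scalar $\lambda \in \mathbb{C}^*$; (iii) pin down $\lambda = 1$ from the local data at a single non-contractible puncture.

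For step (i), the divisor of $u'$ is exactly the pullback by $\tau_\flat$ of the divisor of $u$. By Lemma \ref{genuslemma} the non-contractible punctures $z^{0,\pm}$ and $z^{\infty,\pm}$ are fixed points of $\tau_\flat$, so their contribution is automatically preserved. Any contractible punctures $w^\pm_{i,j}$, $j=1,\dots,p$, would be cyclically permuted by $\tau_\flat$, but the moduli problem $\mathcal{M}$ prescribes the \emph{same} multiplicity $k^\pm_i$ at each of them, so once again the divisor is $\tau_\flat$-invariant. Step (ii) is then immediate: on the compact Riemann surface $\Sigma$, any two meromorphic sections of $c^*L$ with the same divisor differ by multiplication by a nowhere-vanishing global holomorphic function, i.e.\ by a non-zero constant, so $u' = \lambda u$ for some $\lambda\in\mathbb{C}^*$.

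For step (iii), I would work in a local trivialization at $z^{0,+}$. Writing $u(z) = (z,f(z))$ with $f(z) = a_{-k^{0,+}}z^{-k^{0,+}} + \cdots$ and using the normal forms $\tau_\flat(z) = e^{im\theta}z$ and $\tau(z,\mu) = (e^{im\theta}z, e^{iq\theta}\mu)$ from (\ref{sigmaat0}), a direct computation gives
\[
u'(z) = \bigl(z,\; e^{-iq\theta} f(e^{im\theta}z)\bigr),
\]
so equating leading Laurent coefficients in $u' = \lambda u$ yields $\lambda = e^{-i(q+mk^{0,+})\theta}$. The ramification constraint \ref{CM2} together with Lemma \ref{localrem} and $v\equiv q^{-1}$ gives $mk^{0,+} \equiv -q \pmod p$, hence $\lambda = 1$ and $u' = u$, as required.

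The only potential obstacle is making sure that the leading coefficient at $z^{0,+}$ is genuinely nonzero so that equating coefficients pins $\lambda$ down; this is automatic since $z^{0,+}$ is a pole of prescribed order $k^{0,+}$. I would remark that the same computation works at any of the non-contractible punctures (which are always present when $n_{nc}\ge 1$), so the argument extends without change to moduli problems with an arbitrary number of non-contractible and contractible punctures.
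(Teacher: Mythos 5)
Your proof is correct and follows essentially the same strategy as the paper: form $u' := \tau^{-1}\circ u\circ\tau_\flat$, observe it shares the divisor of $u$ so that $u'=\lambda u$ for a constant $\lambda\in\mathbb{C}^*$, and pin down $\lambda=1$ by a local computation at a non-contractible puncture. The only cosmetic differences are that you compute at $z^{0,+}$ by equating leading Laurent coefficients while the paper computes at $z^{\infty,+}$ via a limit of phases, and that you spell out why the divisor of $u$ is $\tau_\flat$-invariant (a point the paper asserts without comment); your modular identity $m k^{0,+}\equiv (1-q)(1-v)^{-1}\equiv -q$ does check out using $qv\equiv 1$ and the relations in Lemma~\ref{localrem}.
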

\begin{proof} 
Given fixed zeros and poles, one has a $\mathbb{C}^*$-family of meromorphic sections given
by $\lambda u$, $\lambda\in \mathbb{C}^*$. Note that since the scaling along fibres commutes with $\tau$-
action, if there is some equivariant meromorphic section $u$, then all meromorphic sections of
the form $\lambda u$ are equivariant.

Let $u$ be a meromorphic section of $c^*L$ with fixed zeros and poles, where $c$ is equivariant. 
Then $u':=\tau^{-1}\circ u\circ \tau_\flat$ is also a meromorphic section of $c^*L$ having same zeros 
and poles with $u$. 
Hence there is some $\lambda\in \mathbb{C}^*$ such that $u'=\lambda u$. In fact, $\lambda\in S^1$ since all the actions we have are unitary.  
Both $u$ and $u'$ has a pole at $z^{\infty,+}$ of order $k^{\infty,0}$. 
We know that the ramification number of $c$ at $z^{\infty,+}$ coincides with $\hat{c}$ and satisfies
\begin{eqnarray}\label{13}
 k^{\infty,+}\equiv r^{\infty,+}(1-q)^{-1}.
\end{eqnarray}
by the definition of $\mathcal{C}_\mathcal{M}$.
We take the local trivialization of $c^*L$ around $z^{\infty,+}\in \Sigma$ as in the previous section  
(see Lemma \ref{localrem}) so that 
around $z^{\infty,+}$, we have $\tau(z,\lambda)=(e^{im\theta}z,e^{i\theta}\lambda)$ and $\tau_\flat(z)=e^{im\theta}z$ where 
\begin{eqnarray}\label{14}
 r^{\infty,+}m\equiv (q-1).
\end{eqnarray}
With respect to the above trivialization, we write $u(z)=(z,f(z))$ and $u'(z)=(z,f'(z))$ where $f$ and $f'$ are  meromorphic functions.  With polar coordinates $z=\rho e^{it}$, we have
$$\lim_{\rho\rightarrow 0}\frac{f(\rho e^{it})}{|f(\rho e^{it})|}=e^{-ik^{\infty,+}t},\;
\lim_{\rho\rightarrow 0}\frac{f'(\rho e^{it})}{|f'(\rho e^{it})|}=\lambda e^{-ik^{\infty,+}t}.$$
On the other hand, definition of $u'$ gives 
\begin{eqnarray*}
 \lim_{\rho\rightarrow 0}\frac{f'(\rho e^{it})}{|f'(\rho e^{it})|}
 &=&\lim_{\rho\rightarrow 0}\frac{\tau^{-1} \circ f(\tau_\flat(\rho e^{it}))}{|\tau^{-1} \circ f(\tau_\flat(\rho e^{it}))|}\\
 &=&\lim_{\rho\rightarrow 0}\frac{e^{-i\theta} f(\rho e^{it+im\theta})}{|f(\rho e^{it+im\theta})|}\\
 &=& e^{-i\theta}(e^{-ik^{\infty,+}(t+m\theta)})\\
 &=& e^{-ik^{\infty,+}t}e^{-i(k^{\infty,+}m+1)\theta}\\
 &=& e^{-ik^{\infty,+}t}
\end{eqnarray*}
where the last equation follows from (\ref{13}) and (\ref{14}).  Hence $\lambda=1$ and therefore $\tau \circ u=u\circ \tau_\flat$.
\end{proof}
We want to understand the moduli space $\mathcal{C}_\mathcal{M}$ and possible obstructions on the existence of meromorphic sections over the lifts of singular curves, which correspond to the elements of $\mathcal{C}_\mathcal{M}$. To this end we recall some basic notions in the theory of Riemann surfaces.

Given a closed Riemann surface $\Sigma$ with genus $g$ and $[c]\in H_1(X,\mathbb{Z})$, there is a functional on the (vector) space $\Omega^1(\Sigma)$ consisting of holomorphic $1$-forms on $\Sigma$ ,
$$\int_{[c]}:\Omega^1(\Sigma)\rightarrow \mathbb{C},\:\: \omega\mapsto \int_{c}\omega,$$
which is well-defined since any holomorphic 1-form on $\Sigma$ is necessarily closed. An element of $\Omega^1(\Sigma)^*$ is called a \textit{period} if it is of the above form. The  \textit{Jacobian} of $\Sigma$ is the the quotient space
$$J(\Sigma):=\Omega^1(\Sigma)^*/\Lambda,$$
where $\Lambda$ is the space of periods. It turns out that $J(\Sigma)$ is isomorphic to the complex torus of dimension $g$.

Let $z_0$ be a point in $\Sigma$. We consider the map 
$$A:\Sigma\rightarrow \Omega^1(\Sigma)^*;\; A(z)(\omega):=\int_{\gamma_z}\omega,\; \omega\in \Omega^1(\Sigma),$$
where $\gamma_z$ is some path connecting $z_0$ to $z$.  Although this map depends on $\gamma_z$, 
it descends to so called the \textit{Abel map}  $A:\Sigma\rightarrow J(\Sigma)$ which is independent of the choice of $\gamma_z$. 
Note that the Abel map naturally extends to a group homomorphism over the group $Div(\Sigma)$
of divisors via $A(\sum n_zz):=\sum n_zA(z)$. It turns out that once restricted to subgroup $Div_0(\Sigma)=\ker {\rm deg}$, where ${\rm deg}:Div(\Sigma)\rightarrow\mathbb{Z}$ is the degree map, the Abel map is independent of the point $z_0$. 

$Div_0(\Sigma)$ has a special subgroup $PDiv(\Sigma)$ consisting of \textit{principal divisors}, namely the divisors given by meromorphic functions. Given a meromorphic function $f$ on $\Sigma$, then its divisor is given by 
$D(f)=D_0(f)-D_\infty(f)$ where  
$$D_0(f):=\sum_{f(z)=0}o_z(f)z,\;D_\infty(f):=\sum_{f(z)=\infty}o_z(f)z$$
and $o_z(f)>0$ stands for the order of zeros/poles. 
It turns out that the map
\begin{eqnarray}\label{abel}
A:Div_0(\Sigma)\rightarrow J(\Sigma)
\end{eqnarray}
is surjective and its kernel is given by $PDiv(\Sigma)$. Hence one gets the isomorphism 
$$Pic(\Sigma)=Div_0(\Sigma)/PDiv(\Sigma)\cong J(\Sigma),$$
where the \textit{Picard group} $Pic(\Sigma)$ is the group of isomorphism classes of of degree zero line bundles 
over $\Sigma$. Moreover if $A([D])=0$ for some $[D]\in Pic(\Sigma)$, then the divisor $D$ defines the trivial bundle and we have the following consequence. If $D_1$ and $D_2$ are two divisors, which define a line bundle of the same degree, then these bundles are isomorphic if and only if $A(D_1-D_2)=0$ and this is trivially the case if $D_1-D_2=0$.  We let $\mathfrak{S}^n(X)$ denote the $n$-fold symmetric product of the set $X$.

Setting the ground, we first give a description of $\mathcal{C}_\mathcal{M}$, which is adopted from \cite{oldie}.
\begin{lem}\label{moduliCM}
 $\mathcal{C}_\mathcal{M}$ is biholomorphic to 
 $$\left((\mathfrak{S}^{n_0}(\mathbb{C}P^1\setminus\{z^{\infty,+}\})
 \times \mathfrak{S}^{n_\infty}(\mathbb{C}P^1\setminus\{z^{0,-},z^{0,+}\}))\setminus \Delta\right)\times \mathbb{C}^*,$$
 where $\Delta$ is the subset of pairs 
 $(D_0,D_\infty)$ in $\mathfrak{S}^{n_0}(\mathbb{C}P^1\setminus\{z^{\infty,+}\})\times \mathfrak{S}^{n_\infty}(\mathbb{C}P^1\setminus\{z^{0,-},z^{0,+}\})$
 with at least one common point and
$$n_0:=\frac{d-\overline{r}^{0,+}-\overline{r}^{0,-}}{p},\;n_\infty:=\frac{d-\overline{r}^{\infty,+}}{p}.$$
\end{lem}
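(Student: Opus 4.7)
The plan is to parametrize an element $\hat{c}\in\mathcal{C}_\mathcal{M}$ by the data of its zero divisor $D_0:=\hat{c}^{-1}(0)$, its pole divisor $D_\infty:=\hat{c}^{-1}(\infty)$, and its leading coefficient $\lambda\in\mathbb{C}^*$. Since $\hat{c}:\mathbb{C}P^1\to\mathbb{C}P^1$ is a degree $d$ holomorphic map, it is a rational function, and such a function is determined up to the scaling by $(D_0,D_\infty)$ subject to $\deg D_0=\deg D_\infty=d$ and $\mathrm{supp}(D_0)\cap\mathrm{supp}(D_\infty)=\emptyset$.

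Next I would translate conditions \ref{CM1} and \ref{CM2} into constraints on the pair $(D_0,D_\infty)$. The congruence conditions on the ramification numbers at $z^{0,\pm}$ and $z^{\infty,+}$ together with the divisibility by $p$ at all points of $P\setminus\Gamma$ force a decomposition
$$D_0=\overline{r}^{0,+}\,z^{0,+}+\overline{r}^{0,-}\,z^{0,-}+p\,D_0',\qquad D_\infty=\overline{r}^{\infty,+}\,z^{\infty,+}+p\,D_\infty'$$
for unique effective divisors $D_0',D_\infty'$ on $\mathbb{C}P^1$. The degree constraint \ref{CM1} then yields $\deg D_0'=n_0$ and $\deg D_\infty'=n_\infty$; here I would invoke Lemmas \ref{RR} and \ref{homotopy} (the latter expressing $k^{\infty,+}\equiv v(k^{0,-}-k^{0,+})$) together with Lemma \ref{localrem} to check the compatibility $d\equiv \overline{r}^{0,+}+\overline{r}^{0,-}\equiv \overline{r}^{\infty,+}\pmod p$, so that $n_0,n_\infty$ are well-defined non-negative integers.

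Then I would translate the disjoint-support condition. Since the multiplicity of $z^{\infty,+}$ in $D_\infty$ is the positive integer $\overline{r}^{\infty,+}$, the point $z^{\infty,+}$ cannot lie in $\mathrm{supp}(D_0)$, whence $D_0'\in\mathfrak{S}^{n_0}(\mathbb{C}P^1\setminus\{z^{\infty,+}\})$; symmetrically $D_\infty'\in\mathfrak{S}^{n_\infty}(\mathbb{C}P^1\setminus\{z^{0,+},z^{0,-}\})$. The remaining part of the disjoint-support condition, preventing a shared point of $D_0'$ and $D_\infty'$ (which would drop the genuine degree of $\hat{c}$), is exactly the removal of the diagonal locus $\Delta$.

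Finally, I would write down the explicit assignment: in an affine coordinate $z$ on $\mathbb{C}P^1$ with $z^{\infty,+}=\infty$ say, the data $(D_0',D_\infty',\lambda)$ yields
$$\hat{c}(z)=\lambda\cdot\frac{(z-z^{0,+})^{\overline{r}^{0,+}}(z-z^{0,-})^{\overline{r}^{0,-}}\prod_i(z-a_i)^{p}}{\prod_j(z-b_j)^{p}}$$
(with appropriate factor conventions when $z^{\infty,+}=\infty$), where $D_0'=\sum a_i$ and $D_\infty'=\sum b_j$ as effective divisors with multiplicity. Using the classical biholomorphism $\mathfrak{S}^n(\mathbb{C}P^1)\cong\mathbb{C}P^n$ via Vieta's formulas, this assignment is a holomorphic bijection with holomorphic inverse (the coefficients of numerator and denominator polynomials depend polynomially on the roots and vice versa on the affine chart of monic polynomials), hence a biholomorphism onto the described open subset.

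The one real consistency point to watch is Step 2: one has to verify both that $n_0$ and $n_\infty$ are non-negative integers (so the claimed symmetric products even make sense) and that no additional hidden constraints are imposed by \ref{CM2} beyond the three fixed multiplicities plus the mod $p$ condition; I expect this to be the only step requiring genuine arithmetic, and it amounts to the compatibility check between Lemmas \ref{RR}, \ref{homotopy}, and \ref{localrem} sketched above. The rest is a direct bookkeeping argument using the elementary correspondence between rational functions on $\mathbb{C}P^1$ and their divisors.
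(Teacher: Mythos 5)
Your proposal is correct and follows essentially the same route as the paper: parametrize $\hat{c}$ by its zero and pole divisors plus a scale factor, observe that the $\mathbb{C}^*$-ambiguity gives the last factor (the paper phrases this as the vanishing of the Abel map in genus zero, you phrase it as the elementary divisor correspondence on $\mathbb{C}P^1$ — same content), peel off the prescribed residues $\overline{r}^{0,+},\overline{r}^{0,-},\overline{r}^{\infty,+}$ at the fixed punctures, and identify the remaining freedom as a pair of effective divisors of degrees $n_0,n_\infty$ with $p$-fold multiplicities, with the diagonal removed to keep zeros and poles disjoint. The only places where you go beyond the paper's sketch are the explicit product formula for $\hat{c}$ and the verification, via Lemmas \ref{RR}, \ref{homotopy} and \ref{localrem}, of the congruence $d\equiv\overline{r}^{0,+}+\overline{r}^{0,-}\equiv\overline{r}^{\infty,+}$, which the paper asserts without derivation; both additions are correct and welcome detail rather than a different argument.
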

\begin{proof}
The moduli space $\mathcal{C}_\mathcal{M}$ itself can be seen as the space of 
meromorphic functions on $\mathbb{C}P^1$, which has a particular distribution of its zeros and poles. We note that in this case the Abel map vanishes identically, that is any degree zero divisor defines a $\mathbb{C}^*$-family of meromorphic functions. Hence it is enough to characterize the the set of divisors satisfying the conditions \ref{CM1} and \ref{CM1}. 

The condition \ref{CM1} tells us that 
${\rm deg}(D_0(\hat{c}))={\rm deg}(D_\infty(\hat{c}))=d$. Namely we have complex $d$-dimensional freedom to choose the zeros or poles. The conditions given by \ref{CM2} translate as follows. We two zeros $z^{0,-}$, $z^{0,+}$  and a pole $z^{\infty,+}$ whose orders, which corresponds to the ramification numbers, are fixed in$\mod p$. We fix
We fix  
$$\overline{r}^{0,+}, \overline{r}^{\infty,-}, \overline{r}^{0,+}\in\{1,...,p-1\}$$ such that
$$\overline{r}^{0,+}\equiv r^{0,+},\; \overline{r}^{\infty,+}\equiv r^{\infty,+},\;\overline{r}^{0,-}\equiv r^{0,-}.$$
Now we can perturb away $( r^{0,+}-\overline{r}^{0,+})$-many zeros at $z^{0,+}$ but any zero different than $z^{0,\pm}$ has to have order divisible by $p$. The same reasoning applies to $z^{0,+}$ and $z^{\infty,+}$. Hence for zeros we have 
$(d-\overline{r}^{0,+}-\overline{r}^{0,-})/p$
many choices among the points in $\mathbb{C}P^1\setminus\{z^{\infty,+}\}$ and for poles we have $(d-\overline{r}^{\infty,+})/p$
many choices among the points in $\mathbb{C}P^1\setminus\{z^{0,+},z^{0,-}\}$. We note that $d\equiv\overline{r}^{0,+}+\overline{r}^{0,-}\equiv \overline{r}^{\infty,+}$. After taking suitable symmetric products of our domain and removing collections of points which do not not result in divisors, we get the above description.
\end{proof}
We note that  
 $$\mathfrak{S}^{n_0}((\mathbb{C}P^1\setminus\{z^{\infty,+}\})
 \times \mathfrak{S}^{n_\infty}((\mathbb{C}P^1\setminus\{z^{0,-},z^{0,+}\})$$
 is a complex manifold and  $\Delta$ is an irreducible subvariety of (complex) codimension one \cite{oldie}. In particular, the dimension of $\mathcal{C}_\mathcal{M}$ is given by 
\begin{eqnarray}
 \dim_\mathbb{R} \mathcal{C}_\mathcal{M}=2+ 2n_0+2n_\infty=2+\frac{2}{p}(2d-\overline{r}^{0,+}-\overline{r}^{0,-}-\overline{r}^{\infty,+}).
\end{eqnarray}

Now the question is that given $\hat{c}\in\mathcal{C}_\mathcal{M}$, does $c^*L$  admit a 
meromorphic section with zeros and poles determined by $\mathcal{M}$ for some lift $c$ of corresponding 
$\overline{c}$. It turns out that by the very nature of the equivariant picture there is no obstruction to the existence of such meromorhic sections. 
\begin{prop}\label{sectionsexist} Given $\hat{c}\in \mathcal{C}_{\mathcal{M}}$, there exists a meromorphic section of $c^*L$, which leads to a punctured curve in $\mathcal{M}$ where $c$ is a lift of the singular curve $\overline{c}$ corresponding to $\hat{c}$.  
\end{prop}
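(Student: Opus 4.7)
The plan is to reduce the existence of $u$ to an isomorphism of line bundles, exploit the identity $c^p=\hat{c}\circ\mathfrak{p}$ to realize the $p$-th tensor power via pullback from $\mathbb{C}P^1$, and then construct $u$ as a global $p$-th root whose monodromy is forced to vanish by the conditions defining $\mathcal{C}_\mathcal{M}$. Set $D := k^{0,-}z^{0,-} - k^{0,+}z^{0,+} - k^{\infty,+}z^{\infty,+}$; by Lemma \ref{RR} one has $\deg D = -d = \deg c^*L$, so the desired section exists if and only if $c^*L \cong \mathcal{O}_\Sigma(D)$ as holomorphic line bundles on $\Sigma$.

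The local form $\texttt{p}_\flat(z)=z^p$ around the singular points, combined with the defining diagram of $\hat{c}$, yields $c^p=\hat{c}\circ\mathfrak{p}$ as meromorphic functions $\Sigma\to\mathbb{C}P^1$, and hence a canonical isomorphism $(c^*L)^{\otimes p}\cong\mathfrak{p}^*\hat{c}^*L$. Since $\hat{c}^*L$ has degree $-d$ on $\mathbb{C}P^1$ and $\mathrm{Pic}^0(\mathbb{C}P^1)=0$, it admits a meromorphic section $\tilde{u}$ with divisor $E := -k^{0,+}[z^{0,+}]-k^{\infty,+}[z^{\infty,+}]+k^{0,-}[z^{0,-}]$ supported only on $\Gamma$, so that $\mathfrak{p}^*\tilde{u}$ is a meromorphic section of $(c^*L)^{\otimes p}$ with divisor $\mathfrak{p}^*E = pD$.

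To extract $u$ from $\mathfrak{p}^*\tilde{u}$, one fixes a trivialization of $c^*L$ obtained by pulling back via $c$ a standard meromorphic section of $L$ with divisor $-[\infty]$; in the induced trivialization of $\hat{c}^*L$, the section $\tilde{u}$ is represented by a meromorphic function on $\mathbb{C}P^1$ of function-divisor $E+\hat{c}^{-1}(\infty)$. By the divisibility condition in \ref{CM2}, all multiplicities of this function-divisor at non-branch points of $\mathfrak{p}$ are divisible by $p$, while near each branch point the local coordinate substitution $w=\zeta^p$ manifestly produces a single-valued $p$-th root; thus a local $p$-th root of the pulled-back function is well-defined throughout $\Sigma$. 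The residual obstruction to global existence is a monodromy character $\pi_1(\Sigma\setminus\tilde\Gamma_{nc})\to\mu_p$ which, on a loop $\gamma$ with $\mathfrak{p}\gamma$ of winding numbers $(w_0,w_1,w_\infty)$ around $(z^{0,-},z^{0,+},z^{\infty,+})$, reads
\[
-k^{0,+}w_1 + k^{0,-}w_0 + (r^{\infty,+}-k^{\infty,+})w_\infty \pmod p.
\]
Using the constraint $\mathfrak{p}\gamma\in K=\ker\overline{u}_*$, i.e.\ $k^{0,+}w_1+qk^{\infty,+}w_\infty-k^{0,-}w_0\equiv 0 \pmod p$ (which expresses Lemma \ref{homotopy}), together with the congruence $r^{\infty,+}\equiv k^{\infty,+}(1-q)\pmod p$ from Lemma \ref{localrem}, the above expression vanishes identically. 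Hence a global meromorphic section $u$ of $c^*L$ with $\mathrm{div}(u)=D$ exists, and by the preceding lemma it is automatically $\langle\tau_\flat\rangle$-equivariant, so it descends to a $J_\alpha$-holomorphic curve $\overline{u}\in\mathcal{M}$.

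The main obstacle is the cancellation of the monodromy character in the third paragraph: one must identify the function representative of $\tilde{u}$ in a trivialization that correctly tracks the ramification of both $\mathfrak{p}$ and $\hat{c}$, and then verify that the conditions defining $\mathcal{C}_\mathcal{M}$ conspire with the homotopy relation of Lemma \ref{homotopy} to annihilate the character. This reflects the fact that although the obstruction class $[c^*L\otimes\mathcal{O}_\Sigma(-D)]\in J(\Sigma)$ is a priori only $p$-torsion, the equivariant nature of the problem pins it down to $0$.
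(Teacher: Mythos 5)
Your approach is correct, but it is a genuinely different route from the paper's, and it is arguably cleaner because it works uniformly for every $\hat c\in\mathcal{C}_\mathcal{M}$ at once.

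The paper reduces the question to showing that the Abel--Jacobi class $A(D_\mathcal{M}-c^*D_L)$ vanishes, and it verifies this by hand for one reference curve $\hat c$ with $P=\Gamma_{nc}$: after tuning the representative divisor $D_L=l\cdot 0-(l+1)\cdot\infty$ of $L$ so that all coefficients of $D_\mathcal{M}-c^*D_L$ become divisible by $p$, the difference descends to a degree-zero (hence principal) divisor on $\mathbb{C}P^1$, so its pull-back is principal on $\Sigma$. Extending to arbitrary $\hat c\in\mathcal{C}_\mathcal{M}$ is then an implicit continuity/discreteness step: $\daleth(\hat c)-\daleth(\hat c')$ is always $p$-torsion in $J(\Sigma)$, so $A\circ\daleth$ is a continuous map from the connected space $\mathcal{C}_\mathcal{M}$ into a coset of a finite set and is therefore constant. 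Your argument avoids this detour entirely: using the identity $c^p=\hat c\circ\mathfrak{p}$ you produce the canonical isomorphism $(c^*L)^{\otimes p}\cong \mathfrak{p}^*\hat c^*L$, pull back an explicit section $\tilde u$ of $\hat c^*L$, and phrase the obstruction as the $p$-th-root monodromy of the resulting function on $\Sigma$. The divisibility clause in \ref{CM2} kills the local monodromy at all zeros/poles, and the residual global character is exactly annihilated by the combination of the ramification congruence $r^{\infty,+}\equiv k^{\infty,+}(1-q)$ and the membership $\mathfrak{p}\gamma\in K=\ker\overline u_*$. This is the same underlying mechanism (the defining data of $\mathcal{C}_\mathcal{M}$ are precisely the mod-$p$ shadow of the lifting constraint), but you exploit it via a character computation on $\pi_1$ rather than via the Jacobian. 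One small point of exposition: the obstruction character a priori lives on $\pi_1(\Sigma\setminus\mathrm{supp}\,\mathrm{div}\,F)$ rather than on $\pi_1(\Sigma\setminus\tilde\Gamma_{nc})$; you should note that the local $p$-th roots force it to factor through $\pi_1(\Sigma)$, and that your winding-number formula in terms of $\mathfrak{p}\gamma$ is valid because $F$ is pulled back from $\mathbb{C}P^1$, so $\deg(F\circ\gamma)=\deg(g\circ\mathfrak{p}\gamma)$ can be read off from the divisor of the $\mathbb{C}P^1$-function $g$ alone (a genus-dependent correction would otherwise appear for $\deg(F\circ\gamma)$ on a higher-genus $\Sigma$). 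With that clarification, the argument is complete.
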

\begin{proof}
We define the divisor $D_\mathcal{M}$ of $\Sigma$ by
$$D_\mathcal{M}=k^{0,-}z^{0,-}-k^{0,+}z^{0,+}-k^{\infty,+}z^{\infty,+}.$$
We note that the isomorphism class of the line bundle $L$ is determined by the isomorphism class of divisors on $\mathbb{C}P^1$, whose degree is $-1$. We fix a divisor in this class of the form 
$$D_L:=l\cdot0-(l+1)\cdot\infty,\;\; l>0$$
where $l$ is to be chosen. Since the phase shift on $\mathcal{C}_\mathcal{M}$ is not relevant for our problem we ignore it in what follows. Then lifting scheme above provides the following continuous embedding 
\begin{eqnarray}\label{liftembed}
\daleth: \mathcal{C}_\mathcal{M}\rightarrow Div^0(\Sigma),\;\;\hat{c}\mapsto D_\mathcal{M}-c^*D_L
\end{eqnarray}
where $c$ is (up to phase shift) the unique lift of $\hat{c}$ corresponding to $\hat{c}$ satisfying 
$$c\circ\tau_\flat=\sigma_\flat\circ c$$ 
and $c^*D_L$ is the pull-back divisor. We note that ${\rm deg} (D_\mathcal{M})={\rm deg} (c^*D_L)=d$ so that the map is well-defined. 
Now we want to show that there exists some $l\geq 0$ such that $A\circ\daleth(\hat{c})=0$ for a curve $\hat{c}$ satisfying 
\begin{eqnarray}\label{abelvanishes}
D_0(\hat{c})=r^{0,+}z^{0,+}+r^{0,-}z^{0,-},\;\;D_\infty(\hat{c})=r^{\infty,+}z^{\infty,+}.
\end{eqnarray}
In this case, 
$$D_\mathcal{M}-c^*D_L=(k^{0,-}-lr^{0,-})z^{0,-}+(-k^{0,+}-lr^{0,+})z^{0,+}+((l+1)r^{\infty,+}-k^{\infty,+})z^{\infty,+}.$$
First we want to find $l$ such that all the coefficients above are zero in$\mod p$ and by \ref{CM2}, we see that this holds for 
$l\equiv (1-v)^{-1}$. We choose some $l>0$ such that $l\equiv (1-v)^{-1}$. Then 
$$D_\mathcal{M}-c^*D_L=p\left(m^{0,-}z^{0,-}+m^{0,+}z^{0,+}+m^{\infty,+}z^{\infty,+}\right)$$
for some $m^{0/\infty,\pm}\in \mathbb{Z}$. We note that the divisor $m^{0,-}z^{0,-}+m^{0,+}z^{0,+}+m^{\infty,+}z^{\infty,+}$ descends to a divisor on $\mathbb{C}P^1$ where $z^{0/\infty,\pm}$ stands for the images of $z^{0/\infty,\pm}$ under $\mathfrak{p}$. Moreover this divisor has degree 0. Then we know that there is a meromorphic function, say $f$ on $\mathbb{C}P^1$ which realizes this divisor. Then $D_\mathcal{M}-c^*D_L$ is precisely the divisor for the meromorphic function $f\circ\mathfrak{p}$ on $\Sigma$. Hence $A(D_\mathcal{M}-c^*D_L)=0$.
\end{proof}
\begin{cor}\label{generaldim}
We have $\mathcal{M}\cong \mathcal{C}_\mathcal{M}\times \mathbb{C}^*$ and in particular,
 $$\dim_\mathbb{R}\mathcal{M}=4+\frac{2}{p}(2d-\overline{r}^{0,+}-\overline{r}^{0,-}-\overline{r}^{\infty,+}).$$
\end{cor}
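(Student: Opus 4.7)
The plan is to exhibit $\mathcal{M}$ as a trivial $\mathbb{C}^*$-bundle over $\mathcal{C}_\mathcal{M}$. I would introduce the natural projection $\Pi: \mathcal{M} \to \mathcal{C}_\mathcal{M}$ sending $\overline{u}$ to its smoothened base curve $\hat{c}$ from (\ref{chat}). This map is well-defined by Lemmas \ref{localrem}, \ref{degree} and \ref{ramification}, and is surjective by Proposition \ref{sectionsexist}. The remaining task is to identify the fiber over each $\hat{c}$ with $\mathbb{C}^*$.

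To describe $\Pi^{-1}(\hat{c})$, I would fix a lift $c$ of the singular base curve $\overline{c}$ corresponding to $\hat{c}$ (one of the $p$ choices from Remark \ref{basemanylifts}). Proposition \ref{sectionsexist} supplies an equivariant meromorphic section $u_0$ of $c^*L$ with the prescribed divisor $D_\mathcal{M}$. Any other meromorphic section of $c^*L$ with the same divisor differs from $u_0$ by a nowhere-vanishing holomorphic function on the compact Riemann surface $\Sigma$, which must be constant; so the full space of such sections is $\{\lambda u_0 : \lambda \in \mathbb{C}^*\}$, and each $\lambda u_0$ is automatically equivariant because fiberwise scaling commutes with the $\tau$-action. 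Switching the lift from $c$ to $\sigma_\flat^j \circ c$ merely replaces $u$ by $\sigma^j \circ u$, which has the same image downstairs, so the $p$-fold freedom in the choice of lift does not enlarge the fiber.

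The crucial remaining step is injectivity of $\lambda \mapsto \texttt{p}\circ \lambda u_0$ modulo descent through $\mathfrak{p}$. If $\texttt{p}\circ \lambda_1 u_0 = \texttt{p}\circ \lambda_2 u_0$, then by continuity on the connected surface $\Sigma\setminus \tilde{\Gamma}$ there is a single integer $k$ with $\lambda_2 u_0 = \sigma^k \circ \lambda_1 u_0$ pointwise. Evaluating this in the trivializations (\ref{trivat0}) and (\ref{trivatinfty}) around the punctures $z^{0,\pm}$ and $z^{\infty,+}$ and using (\ref{sigmaat0}), (\ref{sigmaatinfty}) yields $\lambda_2/\lambda_1 = e^{ikq\theta}$ on the fibers near $0$ and $\lambda_2/\lambda_1 = e^{ik\theta}$ on the fibers near $\infty$. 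Consistency forces $k(q-1)\equiv 0 \mod p$; since $p$ is prime and $1<q<p-1$ ensures $\gcd(p,q-1)=1$, we conclude $k\equiv 0$ and hence $\lambda_1 = \lambda_2$. This produces the bijection $\mathcal{M}\cong \mathcal{C}_\mathcal{M}\times \mathbb{C}^*$, and the dimension formula follows immediately by adding $\dim_\mathbb{R} \mathbb{C}^* = 2$ to the count from Lemma \ref{moduliCM}.

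The hard part will be this injectivity on the fiber: a priori the $\mathbb{Z}_p$-action on $L^*$ could identify distinct scalings $\lambda$ after projecting to $\overline{L}^*$, and ruling this out crucially exploits the mismatch between the $\sigma$-weights at $0$ and $\infty$ together with the primality of $p$.
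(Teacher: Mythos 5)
Your proof is correct and supplies exactly the argument the paper leaves implicit after Lemma \ref{moduliCM}, the equivariance lemma, and Proposition \ref{sectionsexist}: the $\mathbb{C}^*$-factor is the scaling torsor of (automatically equivariant) meromorphic sections with the prescribed divisor, and you correctly verify that descent to $\overline{L}^*$ does not collapse distinct scalings. The injectivity step can be shortened by applying $\pi$ to $\lambda_2 u_0 = \sigma^k\circ\lambda_1 u_0$, which yields $c = \sigma_\flat^k\circ c$ and hence $k\equiv 0$ at once, since the non-constant lifted base curve $c$ cannot take values only in $\mathrm{Fix}(\sigma_\flat^k)=\{0,\infty\}$; your fiber-weight comparison gives the same conclusion (via $k(q-1)\equiv 0$, using $\gcd(p,q-1)=1$), just with a little more bookkeeping near the punctures.
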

\begin{rmk}\label{dimension}
Note that given the moduli problem (\ref{mathcalM}), one reads of the degree $d$ and the quantities $\overline{r}^{0/\infty,\pm}$ from the multiplicities and immediately gets the dimension of the moduli space. Then one can check the regularity of the almost complex structure $J_\alpha$ immediately by comparing the above dimension with the virtual dimension of the moduli space. Such a comparison will be carried out for pair of pants in the next section and the arguments used for pair of pants case immediately generalizes to other configurations. In fact we claim that $J_\alpha$ is regular for any admissible moduli problem. 
\end{rmk}
\section{Computations: pair of pants, cylinders and others}
As we saw above, if $n_{nc}=2$ then one has $g=0$. In this case the covering $\mathfrak{p}$ can be studied more explicitly and one can study all possible lifted base curves, which lead to the different non-empty components of the moduli space. In this section, we study the pair of pants with two positive non-contractible ends in detail and comment on other kinds of moduli problems with $n_{nc}=2$ as well. 

After determining the non-empty moduli spaces of pair of pants, we compute the dimensions of these moduli spaces in terms of the dimensions of equivariant
moduli spaces in the lift, see Remark \ref{dimension}, and compare them with the virtual dimension of these moduli spaces given by the well-known index formula (\ref{Fredholmindex}). 
The observation is that the dimension of the equivariant moduli space
coincides with the index of the problem and this establishes the regularity 
of the almost complex structure $J_\alpha$, see Remark \ref{dimension}. 
\begin{center}
{\emph{The moduli space of pair of pants}}
\end{center}

We consider the following moduli problem $\mathcal{M}$ of $J_\alpha$- holomorphic curves
\begin{eqnarray}\label{popinlens}
 \overline{u}:\mathbb{C}P^1\setminus\{0,1,\infty\}\rightarrow \mathbb{R}\times L(p,q)
\end{eqnarray}
with asymptotics
\begin{eqnarray}\label{asypmofpopinlens}
 \overline{u}(0)=(+\infty,k^0\overline{\gamma}_0),\;\overline{u}(\infty)=(+\infty,k^\infty\overline{\gamma}_{\infty}),\;
\overline{u}(1)\in\{-\infty\}\times S_k
\end{eqnarray}
where $k^0,k^\infty\not \equiv 0$ and $S_k$ denotes the orbit space of contractible orbits of action $2\pi k$. 

In order to work with simpler terms, we choose a model for the domain of (\ref{popinlens}) as follows. We consider the $G\cong \mathbb{Z}_p$-action on $\mathbb{C}P^1$ given by 
\begin{eqnarray}\label{tauflatatpop}
\tau_\flat((z:1))=(e^{im\theta}z:1)
\end{eqnarray}
where $m\in \{1,...,p-1\}$ and $\theta=2\pi/p$. The quotient map leads to the covering map
$$\mathfrak{p}:\mathbb{C}P^1\setminus\{0,1,w_1,...,w_{p-1},\infty\}\rightarrow \overline{\mathbb{C}P^1}\setminus \{\overline{0},\overline{1},\overline{\infty}\}.$$
We identify the quotient space above with the our domain $\mathbb{C}P^1\setminus\{0,1,\infty\}$.

Now we need to determine lifted base curves, namely the  equivariant holomorphic maps 
$$c:\mathbb{C}P^1\rightarrow \mathbb{C}P^1 $$
where the $\langle\sigma_\flat\rangle$-action on the range is given by
$$\sigma_\flat((z_1:z_2))=(e^{i(1-q)\theta}z_1:z_2).$$ 
Once we parametrize the domain and the range via
 $z\mapsto (z:1)$,   any non-trivial holomorphic map $c$ is given by $c(z)=\lambda g(z)/h(z)$
where $\lambda\in \mathbb{C^*}$ and $g$ and $h$ are monic polynomials without a common root. Imposing the equivariance leads to the following characterization.
\begin{lem}\label{cforlens}
A non-trivial holomorphic map $c$ satisfies $c\circ\tau_\flat=\sigma_\flat\circ c$ if and only if it has the following form
\begin{eqnarray} 
c(z)=\lambda z^{r} g(z)/h(z), 
\end{eqnarray}
where
$$mr\equiv 1-q$$
and 
$$g(z)=\prod_{s=1}^{n}(z^p-a_s)^{k_s},\;h(z)=\prod_{t=1}^{m} (z^p-b_t)^{l_t}$$
such that 
$\lambda\in \mathbb{C^*}$, $r\in \{\mp 1,\mp 2,...,\mp (p-1)\}$, $k_s,l_t\in \mathbb{N}^+$ and 
$a^j_s,b^j_t\in \mathbb{C}$ such that 

\end{lem}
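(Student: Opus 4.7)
The plan is to exploit the fact that any non-trivial holomorphic map $c : \mathbb{C}P^1 \to \mathbb{C}P^1$ is a non-constant rational function, and to translate the equivariance condition into a constraint on its divisor. Using the chart $z \mapsto (z:1)$ on both sides, I will write
$$c(z) = \lambda z^r P(z)/Q(z),$$
where $\lambda \in \mathbb{C}^*$, $r \in \mathbb{Z}$ is the order of $c$ at $0$ (negative for a pole), and $P, Q$ are coprime monic polynomials with $P(0), Q(0) \neq 0$. Set $\zeta := e^{im\theta}$. Since $\tau_\flat$ generates the Deck group $\cong \mathbb{Z}_p$ one has $\gcd(m,p)=1$, and because $p$ is prime, $\zeta$ is a primitive $p$-th root of unity.

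Next I would plug this form into the equivariance $c(\zeta z) = e^{i(1-q)\theta} c(z)$. Writing $P(\zeta z) = \zeta^{\deg P}\widetilde{P}(z)$ with $\widetilde{P}$ the monic polynomial whose roots are the $\zeta^{-1}$-scaled roots of $P$ (and analogously $\widetilde{Q}$), and invoking the uniqueness of the monic coprime numerator/denominator of a rational function, the equivariance forces $\widetilde{P}=P$, $\widetilde{Q}=Q$, together with the scalar identity
$$\zeta^{r + \deg P - \deg Q} = e^{i(1-q)\theta}.$$
The identities $\widetilde{P}=P$ and $\widetilde{Q}=Q$ say precisely that the zero and pole sets of $c$ in $\mathbb{C}^*$ are $\langle \zeta \rangle$-invariant with multiplicity constant along each orbit.

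Because the $\mathbb{Z}_p$-action on $\mathbb{C}^*$ is free and $P(0)Q(0)\neq 0$, every orbit of a root of $P$ or $Q$ has exactly $p$ elements. The product over the orbit through $\beta$ is $\prod_{j=0}^{p-1}(z-\zeta^j\beta) = z^p - \beta^p$, so grouping orbits yields
$$P(z) = \prod_{s=1}^{n}(z^p - a_s)^{k_s}, \qquad Q(z) = \prod_{t=1}^{n'}(z^p - b_t)^{l_t},$$
with $a_s = \beta_s^p$ (and $b_t$ defined analogously), $n$ the number of zero-orbits of $c$ in $\mathbb{C}^*$, and $n'$ the number of pole-orbits. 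In particular $\deg P$ and $\deg Q$ are both divisible by $p$, so the scalar identity collapses to $mr \equiv 1-q \pmod p$. The converse, that any $c$ of the claimed form is equivariant, is an immediate calculation using $\zeta^p = 1$.

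The only bookkeeping point is the normalization $r \in \{\mp 1,\ldots,\mp(p-1)\}$: the congruence $mr \equiv 1-q$ fixes the residue class of $r$ modulo $p$ (its non-vanishing uses $1<q<p-1$), while the choice $P(0)Q(0)\neq 0$ pins down its integer value; to land in the stated range one simply absorbs any excess factor $z^{\pm p}$ into $z^r$, shifting $r$ by $\pm p$, with the sign recording whether $c$ has a zero or a pole at $0$. I do not expect a substantive obstacle: the proof is essentially an unpacking of $\mathbb{Z}_p$-equivariance through the uniqueness of the monic coprime factorization of a rational function, and the only delicate piece is matching the exponent at $0$ with the factor structure of $g$ and $h$.
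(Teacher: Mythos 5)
Your proof is correct and takes essentially the same approach as the paper: factor the rational function $c$, observe that the equivariance forces the zeros and poles away from $0$ and $\infty$ to lie in free $\langle\zeta\rangle$-orbits of size $p$ and hence group into $(z^p - a)$-factors, and obtain the residual scalar constraint $\zeta^r = e^{i(1-q)\theta}$, i.e. $mr\equiv 1-q$. You make the paper's terse argument rigorous by invoking the uniqueness of the coprime monic numerator/denominator decomposition and by explicitly using $\zeta^p=1$ to collapse the scalar identity.
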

\begin{proof}
It is clear that such a map is equivariant. For the other direction, one sees that 
if $c$ admits $w\in \mathbb{C}P^1\setminus\{0,\infty\}$ 
as a zero or pole then it must admit $p-1$ many distinct zeros/poles which are given by the orbit of $w$. 
Hence these polynomials 
have to factorize through terms like $(z^p-a_s)^{k_s}$ and $(z^p-b_t)^{l_t}$ where $k_s$ and $l_t$ are any non zero complex numbers.
The only thing that requires attention then is the case where we have zero or infinity as zero or pole. Checking the equivariance 
one gets the relation above between $m$ and $r$.      
\end{proof}
Now we restate the equivariance conditions given by Lemma\ref{localrem} in this special case. By (\ref{asypmofpopinlens}) we have $c(0)=0$ and $c(\infty)=\infty$ and by (\ref{trivat0}) and (\ref{trivatinfty}), an equivariant section $u$ locally looks like $(z,f(z))$ where
$$f(e^{im\theta}z)=e^{iq\theta}\; \textrm{ around } 0$$
and 
$$f(e^{-im\theta}z)=e^{i\theta}\; \textrm{ around } \infty.$$
Now if $f(z)=\sum a_n z^n$ around 0, we have 
$$a_n\not =0 \Rightarrow n\equiv m^{-1}q.$$ 
Similarly, if $f(z)=\sum b_n z^n$ around $\infty$, we have 
$$b_n\not =0 \Rightarrow n\equiv -m^{-1}.$$ 
Since we require that $u$ has  poles of order $k^0$ at $0$  and of order $k^\infty$ at $\infty$,  we get
$$-k^0\equiv m^{-1}q \; \textrm{and}\; -k^\infty\equiv -m^{-1}.$$
This means that 
\begin{eqnarray}\label{multiplicitiesforlens}
 k^0\equiv -m^{-1}q\equiv rq(q-1)^{-1}  \textrm{ and } k^\infty\equiv m^{-1}\equiv r(1-q)^{-1}.
\end{eqnarray}
Since $c(0)=0$ and $c(\infty)=\infty$, we have $r>0$ and $r+p\sum k_s>\sum l_t$, so that the degree of $c$ is given by ${\rm deg}(c)=r+p\sum k_s$. We note that
\begin{equation}\label{multipanddegreeinmod}
-k^0-k^\infty\equiv m^{-1}q-m^{-1}\equiv -m^{-1}(1-q)\equiv -r\equiv -{\rm deg}(c)\equiv {\rm deg}(c^*L).    
\end{equation}
We also require that $u$ has $p$ many zeros of order $k$. Hence for suitable choice of $k$, we have 
\begin{equation}\label{multipanddegree}
pk-k^0-k^\infty=-r-p\sum k_s={\rm deg}(c^*L)
\end{equation}
so that the divisor of $u$ has the correct degree.  
Therefore, in the lift  we have a curve with two positive ends asymptotic to $k^0\gamma_{0}$ and $k^\infty\gamma_{\infty}$ and $p$ many
negative ends with multiplicity $k$. Passing to the quotient,
we get pair of pants in the moduli space we look for.
\begin{rmk}\label{comparetolocalremlemma}
We remark that in terms of Lemma \ref{localrem}, we have $m^{0,+}=m$ and $m^{\infty,+}=-m$. Moreover $r>0$  implies that the ramification numbers $r^{0/\infty,+}$ given in Lemma \ref{localrem} coincides with $r$ in$\mod p$ and (\ref{multiplicitiesforlens}) coincides with the conditions given in Lemma \ref{localrem}.

\end{rmk}
\begin{rmk}\label{componentsofmoduli} In the previous section, we constructed the covering $(\Sigma,\tilde{\Gamma},\mathfrak{p})$ together with a fixed generator $\tau_\flat$ for the Deck group so that given the moduli space with fixed asymptotics, the equivariant curves are given by $u\circ\tau_\flat=\sigma\circ u$. In the above treatment, the representation $m$ of $\tau_\flat$ determines $r$ and hence the homotopy classes of positive ends. Hence different choices of $\tau_\flat$, equivalently $r$,  lead to different homotopy classes of non-contractible ends and 
therefore different components of the moduli space.  
\end{rmk}

We want to compute the dimension of the moduli space $\mathcal{M}$ using the above description. 
We note that the formula given in Corollary \ref{generaldim} immediately applies here. Nevertheless, we 
repeat this computation by directly looking at the lifted base curves. 

Let $c$ be a curve as in Lemma \ref{cforlens}. 
As a lifted base curve, $c$ contributes to the index of the pair of pants in only two ways. 
One is the freedom of moving the roots of $g$ and $h$ and moving the constant $\lambda$. In fact, if one moves the ``roots`` of $z^{r}$, 
then the resulting non zero root have to appear $p$ many but this is not possible since 
this makes the degree of the map jump. 
The contribution of moving "roots" of  $g$ and $h$ is little delicate. Given a term like $(z^r-a_s)^{k_s}$ 
in the factorizations of these polynomials, one has to move roots of $a_s$ simultaneously to keep invariance, i.e. we
only have the freedom  moving $a_s$'s and $b_t$'s. More precisely, if $d$ is the degree of a given base curve $c$ then the contribution of the base curve to the index is $4\lfloor{ d/p}\rfloor+2$.

For the problem $\mathcal{M}$ given by (\ref{popinlens}) and (\ref{asypmofpopinlens}), the lifted base curves satisfies
$r>0$ and $r+p\sum k_s>\sum l_t$ so that the degree of $c$ is given by $r+p\sum k_s$ and therefore $\lfloor{ d/p}\rfloor=\sum k_s$. 
Hence for fixed $a>0$ and $d^I:=\lfloor{ d/p}\rfloor\geq 0$ the index of the problem is given by (compare to Corollary \ref{generaldim})
\begin{eqnarray}\label{equivindex}
4+4d^I. 
\end{eqnarray}
where we add 2-dimensional freedom of rescaling sections. 
\begin{center}
\emph{The index of holomorphic curves in the Morse-Bott setting}
\end{center}

We first recall generalities about the virtual dimension of the moduli space (\ref{generalmoduli}) where $(M,\xi=\ker \alpha)$ is a 3-dimensional Morse-Bott contact manifold. As in the non-degenerate case, the virtual dimension a moduli space is determined by the Conley-Zehnder indices of the asymptotic ends, where the Conley-Zehnder index is suitably generalized to the Morse-Bott situation. We consider the generalization given in \cite{RS}, which is axiomatically described as follows in dimension 2, see \cite{Siefring} and \cite{Gutt}.

Let $\Sigma(1)$ be the space of paths $\varphi: [0,1]\rightarrow Sp(1)$ with $\varphi(0)=I$, where $Sp(1)$ is the space of 2-by-2 symplectic matrices and $I$ is the identity matrix. The \textit{Conley-Zehnder index} is the is a unique map $\mu :\Sigma(1)\rightarrow \frac{1}{2}\mathbb{Z}$ characterized by the following axioms. 
\begin{enumerate}[label=\textrm{(CZ\arabic*)}]
    \item \label{homotopyinvariance} $\mu$ is constant on homotopies $\varphi_s\in \Sigma(1)$ for which $\dim \ker (\varphi_s(1)-I)$ is constant.
    \item \label{maslov} If $\varphi\in \Sigma(1)$ and $\psi:\mathbb{R}/\mathbb{Z}\rightarrow Sp(1)$ is a loop then 
    $$\mu(\psi\varphi)=\mu(\varphi)+2m(\psi)$$
    where $m(\psi)$ is the Maslov index of $\psi$.
    \item \label{inverse} If $\varphi\in \Sigma(1)$ and $\varphi^{-1}\in \Sigma(1)$ is the corresponding path of inverses, then
    $$\mu(\varphi)+\mu(\varphi^{-1})=0.$$
    \item \label{normalization} $\mu(e^{i\pi t})=1$ and if $\varphi (t)=\begin{bmatrix} 1 & -t  \\ 0 & 1  \end{bmatrix}$ then $\mu(\varphi)=\frac{1}{2}$.
\end{enumerate}
Recall that for any $x\in M$ and $t\in \mathbb{R}$, the linearization of the Reeb flow $\phi_t$ leads to a symplectic map 
$$d\phi_t(x):(\xi_x,(d\alpha)_x)\rightarrow (\xi_{\phi_t(x)},(d\alpha)_{\phi_t(x)}).$$
Let $\gamma$ be a closed Reeb orbit with period $T>0$. We fix a symplectic trivialization 
 $$\Phi:S^1\times \mathbb{R}^2\rightarrow \gamma(T\cdot)^*\xi$$
 where $S^1=\mathbb{R}/\mathbb{Z}$.
Then the Conley-Zehnder index of the orbit $\gamma$ with respect to $\Phi$ is given by
\begin{eqnarray}\label{CZoforbit}
    \mu^{\Phi}(\gamma):=\mu \left(\left\{t\mapsto \Phi^{-1}(t)\circ d\phi_{Tt}(\tilde{\gamma}(0))\circ \Phi(0)\right\}\right).
\end{eqnarray}
Let $C=[\Sigma,j,\Gamma,u]\in \mathcal{M}$.  We pick a collection $\{\Phi_{z_i^\pm}\}$ of trivializations for the asymptotic ends $\{\gamma_i^\pm\}$. Then we consider the complex line bundle $(u^*\xi,J)\rightarrow \Sigma\setminus \Gamma$. Let $U\subset \Sigma$ be an open neighbourhood of the puncture set $\Gamma$, consisting of disks centered at each puncture. We endow each such disk with cylindrical coordinates via 
$$[0,+\infty)\times S^1\rightarrow \mathbb{D},\; (s,t)\mapsto e^{-2\pi(s+it)}.$$ 
and we extend $\{\Phi_{z}\}$ to a complex trivialization $\Phi: U\times \mathbb{C}\rightarrow (u^*\xi,J)_{|U}$ and define \textit{first Chern number of $u$ relative to} $\Phi$ is defined to be the signed count 
\begin{eqnarray}\label{relativechernnumber}
c_1^\Phi(u):=\#s^{-1}(0)
\end{eqnarray}
where $s$ is a generic section of $u^*\xi$ such that $\Phi(s)=1$. Having all the ingredients at hand, the \textit{Fredholm index of} $C=[\Sigma,j,\Gamma,u]\in \mathcal{M}$  reads as 
\begin{equation}\label{Fredholmindex}
\begin{aligned}
{\rm ind}(C)=&\:2c_1^\Phi(u)+\sum\limits_{z_i^+\in \Gamma^+}\mu^{\Phi}(\gamma_i^+)-\sum\limits_{z_i^-\in \Gamma^-}\mu^{\Phi}(\gamma_i^-)\\ 
&+\frac{1}{2}\sum\limits_{z_i^\pm\in \Gamma}\dim S_i^\pm +\#\Gamma -\chi(\Sigma),  
\end{aligned}
\end{equation}
see \cite{Bourgeois} and \cite{Siefring}. We recall that ${\rm ind }(C)$ is precisely the index of the Fredholm operator associated to the curve $C$ so that once it is surjective, $\mathcal{M}$ gains a smooth structure near $C$ and the kernel of the operator defines the the tangent space $T_C\mathcal{M}$.
 \begin{center}
\emph{The index of pair of pants} 
 \end{center}
 
The aim of this section is to compute (\ref{Fredholmindex}) for any curve in the moduli space $\mathcal{M}$ of pair of pants given by (\ref{popinlens}) and (\ref{asypmofpopinlens}). We note that this make sense due to \ref{paramunparam}. Our strategy is utilize the lifting procedure here as well. 

We first fix \qq{trivializations} of $\xi_0$ and $\xi$. Using (\ref{alpha0}) we define a non-vanishing section
$$s:S^3\rightarrow \mathbb{C}^2;\;(z_1,z_2)\mapsto(\overline{z}_2,-\overline{z}_1)$$ of $\xi_0$ and get a global complex trivialization 
\begin{eqnarray}\label{Phi0}
\Phi_0:S^3\times \mathbb{C}\rightarrow \xi_0,\; ((z_1,z_2),\lambda)\mapsto \lambda s(z_1,z_2).
\end{eqnarray}
Note that on $\xi_0$, the standard almost complex structure $J_0$ coincides with $i$. As mentioned in the introduction, $\xi\rightarrow L(p,q)$ is non-trivial. Nevertheless, it is convenient to fix a section of it which vanishes along a mild subset. We construct such a section as a quotient of a section of $\xi_0$ as follows. 
We define a section
\begin{eqnarray}\label{equivsection}
k(z_1,z_2):=f(z_1,z_2)s(z_1,z_2),\; f(z_1,z_2):=z_1^{q+1}+z_2^{v+1}
\end{eqnarray}
An easy computation shows that the section $k:S^3\rightarrow \xi_0\subset TS^3$ is equivariant in the sense that 
$$k(\sigma(z_1,z_2))=d\sigma_{(z_1,z_2)}[k(z_1,z_2)]$$
where $\langle d\sigma\rangle$ is the induced action on $TS^3$ for which $\xi_0$ is invariant. We note that $k$ vanishes along a torus knot $K\in S^3$, which does not intersect with $\gamma_0$ or $\gamma_\infty$. It is not hard to see that $K$ descends to a curve that is homologous to $(q+1)[\overline{\gamma}_0]$ in $L(p,q)$. 
We define another trivialization of $\xi_0$ away from $K$
\begin{eqnarray}\label{Phi}
\Phi:(S^3\setminus K\times \mathbb{C})\rightarrow \xi_0,\; ((z_1,z_2),\lambda)\mapsto \lambda k(z_1,z_2)
\end{eqnarray}
so that $\Phi$ induces a trivialization of $\xi$ away from $\texttt{p}(K)$ given by 
\begin{eqnarray}\label{Phibar}
\overline{\Phi}:(L(p,q)\setminus \texttt{p}(K))\times \mathbb{C}\rightarrow \xi\;\;\textrm{such that}\;\; \overline{\Phi}\circ (\texttt{p}\times {\rm id})=d\texttt{p}\circ \Phi.
\end{eqnarray}
We use this trivialization along the orbits which are away from the vanishing set $\texttt{p}(K)$. 

We first compute the Conley-Zehnder indices of the positive ends given by (\ref{asypmofpopinlens}).  Due to the definition of $\overline{\Phi}$, instead of considering the flow of $R$ along $k^\infty\overline{\gamma}_{\infty}$, we consider the flow of $R_0$ along the lifted Reeb arc 
$$[0,k^\infty\theta]\rightarrow S^3,\;t\mapsto (e^{it},0);\; \theta=2\pi/p$$
together with the trivialization $\Phi$. The resulting symplectic arc $\varphi_\infty(t)$ is given by
$$\varphi_\infty(t)= \Phi^{-1}(e^{itk^\infty\theta},0)\circ d\phi_{(t k^\infty\theta)}(1,0)\circ \Phi(1,0).$$
Viewing $\varphi_\infty(t)\in \mathbb{C}$, we compute
\begin{eqnarray*}
\varphi_\infty(t)\Phi(e^{itk^\infty\theta},0) &=&d\phi_{(t k^\infty\theta)}(1,0)[f(1,0)(0,-1)]\\
\varphi_\infty(t) f(e^{itk^\infty\theta},0)s(e^{itk^\infty}\theta,0)&=&(0,-e^{itk^\infty\theta})\\
\varphi_\infty(t) e^{it(q+1)k^\infty\theta}(0,-e^{-itk^\infty}\theta)&=&(0,-e^{itk^\infty\theta})
\end{eqnarray*}
and get 
$$\varphi_\infty(t) =e^{it2\pi\frac{k^\infty(1-q)}{p}}.$$
A similar computation for $k^0\overline{\gamma}_0$ leads to the arc
$$\varphi_0(t)=e^{it2\pi\frac{k^0(1-v)}{p}}.$$
We note that the orbits $k\overline{\gamma}_{0/\infty}$ are non-degenerate  if and only if  $k\not\equiv 0$ since $v,q>1$. Hence we can compute the Conley-Zehnder indices of $k_{0/\infty}\overline{\gamma}_{0/\infty}$ using a standard computational recipe, see \cite{Gutt}. We note that 
$${\rm det}_\mathbb{R}(I-\varphi_{0/\infty}(1))>0,$$
that is $\varphi_{0/\infty}(1)$ and $-I=e^{i\pi}$ are in the same component of non-degenerate symplectic matrices.  We connect $\varphi_{\infty}(1)=e^{i2\pi\frac{k^\infty(1-q)}{p}}$ to  
$-I=e^{i2\pi\left(\lfloor{\frac{k^\infty(1-q)}{p}}\rfloor+\frac{1}{2} \right)}$ by a rotation that does not hit the Maslov cycle. Squaring the resulting loop and computing the degree leads to
\begin{eqnarray}\label{muofgammainfty}
\mu^{\overline{\Phi}}(k^\infty\overline{\gamma}_{\infty})=2\left\lfloor{\frac{k^\infty(1-q)}{p}}\right\rfloor+1.
\end{eqnarray}
A similar argument gives
\begin{eqnarray}\label{muofgamma0}
\mu^{\overline{\Phi}}(k^0\overline{\gamma}_{0})=2\left\lfloor{\frac{k^\infty(1-v)}{p}}\right\rfloor+1.
\end{eqnarray}

\begin{lem}\label{indexpantslemma} The Fredholm index of a pair of pants given by (\ref{popinlens}) and (\ref{asypmofpopinlens}) is given by
\begin{equation}\label{indexpantsformula}
{\rm ind}\;(\overline{u})=\mu^{\overline{\Phi}}(k^\infty\overline{\gamma}_{\infty})+\mu^{\overline{\Phi}}(k^0\overline{\gamma}_{0})+\frac{2}{p}\left(d+k^0v+k^\infty q\right)-2k+2
\end{equation}
where $d=k^0+k^\infty-pk$.
\end{lem}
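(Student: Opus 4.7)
The plan is to apply the Fredholm index formula (\ref{Fredholmindex}) directly, computing each of its constituent pieces via the lifting scheme developed in section 2. With $\Sigma=\mathbb{C}P^1$, $\chi(\Sigma)=2$, and $|\Gamma|=3$, the two positive ends are non-degenerate (since $k^0,k^\infty\not\equiv 0$) and so have $\dim S^+=0$, while the negative end lies in the Morse--Bott family $S_k$ with $\dim S_k=2$. Substituting these values reduces (\ref{Fredholmindex}) to
\[
{\rm ind}(\overline{u})=2c_1^{\overline{\Phi}}(\overline{u})+\mu^{\overline{\Phi}}(k^0\overline{\gamma}_0)+\mu^{\overline{\Phi}}(k^\infty\overline{\gamma}_\infty)-\mu^{\overline{\Phi}}(\gamma^-)+2,
\]
where $\gamma^-$ denotes the negative asymptotic. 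Since the Conley--Zehnder indices at the positive ends are already computed in (\ref{muofgammainfty}) and (\ref{muofgamma0}), the task reduces to identifying $2c_1^{\overline{\Phi}}(\overline{u})-\mu^{\overline{\Phi}}(\gamma^-)$ with $\frac{2}{p}(d+k^0v+k^\infty q)-2k$.

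For the Morse--Bott Robbin--Salamon index at the negative end, I would lift $\gamma^-$ to a $k$-fold iterate of a simple $R_0$-orbit on $S^3$ of period $2\pi k$ (using $\texttt{p}_*R_0=R$ and that a simple contractible orbit on $L(p,q)$ has period $2\pi$ with a simple $R_0$-orbit lift). In the global trivialization $\Phi_0$ of $\xi_0$ given by $s$, a direct computation yields $d\phi_t\,s(x)=e^{2it}s(\phi_t(x))$, so the asymptotic arc is $\varphi_0(t)=e^{2it}$. Switching to $\Phi$ (which differs from $\Phi_0$ along the orbit by multiplication by $f=z_1^{q+1}+z_2^{v+1}$) yields $\varphi(t)=e^{2it}\cdot f(x)/f(\phi_t(x))$, whose total winding over $[0,2\pi k]$ can be read off from the dominant-monomial analysis of $f$ along the orbit; via the descent identity $\mu^{\overline{\Phi}}(\gamma^-)=\mu^{\Phi}(k\tilde\gamma)$, this pins down the negative end's contribution.

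For $c_1^{\overline{\Phi}}(\overline{u})$, I would first verify that $\sigma$ preserves the knot $K=\{f=0\}\cap S^3$ (a short computation using $vq\equiv 1\bmod p$) to deduce $c_1^\Phi(u)=p\cdot c_1^{\overline{\Phi}}(\overline{u})$, reducing the problem to computing $c_1^\Phi(u)$. The section $k\circ u=(f\circ u)\,s(u)$ of $u^*\xi_0$ has interior zeros precisely where $f\circ u=u_1^{q+1}+u_2^{v+1}$ vanishes in the interior of $\Sigma=\mathbb{C}P^1$, since $s\circ u$ is nowhere zero. Reading off pole orders of $f\circ u$ at $z^{0,+}$ and $z^{\infty,+}$ (from the local form of $u$ as a meromorphic section, together with the forced asymptotic directions) and zero orders at the $p$ negative punctures (from the dominant-monomial analysis), and using that the total divisor degree of a meromorphic function on $\mathbb{C}P^1$ vanishes, pins down $c_1^\Phi(u)$. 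Combining everything, the sum $2c_1^{\overline{\Phi}}(\overline{u})-\mu^{\overline{\Phi}}(\gamma^-)$ simplifies via the homotopy constraint $k^0+qk^\infty\equiv 0\bmod p$ from Lemma \ref{homotopy} to the claimed $\frac{2}{p}(d+k^0v+k^\infty q)-2k$. The main obstacle is careful bookkeeping of which monomial in $f$ dominates at each asymptotic, since this depends on the chamber of $S^3\setminus K$ occupied by the orbit; in the pair-of-pants setup, however, the dominances at $\gamma_0$ and $\gamma_\infty$ are forced by the asymptotic direction of $u$, while the dominance at the contractible negative end is constrained by the divisor-degree identity for $f\circ u$, yielding a consistent trivialization-independent answer.
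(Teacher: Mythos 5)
Your opening is correct and mirrors the paper: you substitute $\chi(\Sigma)=2$, $|\Gamma|=3$, $\dim S^+=0$ at the non-degenerate ends and $\dim S_k=2$ at the contractible negative end into (\ref{Fredholmindex}), arriving at the same intermediate equation (\ref{indexpantsunworked}), and you correctly identify that the work is to evaluate $2c_1^{\overline{\Phi}}(\overline{u})-\mu^{\overline{\Phi}}(\gamma^-)$. After that point you and the paper diverge, and the divergence matters.

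The paper never computes $c_1^{\overline{\Phi}}(\overline{u})$ or $\mu^{\overline{\Phi}}(\gamma^-)$ individually. Instead, it writes ${\rm ind}(C)$ for the lifted curve $u$ twice: once in the trivialization $\Phi$ (where the unknowns $c_1^{\Phi}(u)$ and $\mu^{\Phi}(k\gamma_i)$ appear, coupled exactly as in $2c_1^{\overline{\Phi}}(\overline{u})-\mu^{\overline{\Phi}}(k\overline{\gamma})$ after dividing by $p$ via the equivariance identities $c_1^{\Phi}(u)=p\,c_1^{\overline{\Phi}}(\overline{u})$ and $\mu^{\Phi}(k\gamma_i)=\mu^{\overline{\Phi}}(k\overline{\gamma})$), and once in the nowhere-vanishing trivialization $\Phi_0$, where $c_1^{\Phi_0}(u)=0$ and all Robbin--Salamon indices (including at the contractible ends, $\mu^{\Phi_0}(k\gamma)=4k$) are computed by elementary monomial paths. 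Since ${\rm ind}(C)$ is trivialization-independent, equating the two expressions isolates the wanted combination with no reference to the chamber of $S^3\setminus K$ that the contractible asymptotic occupies. This is what makes the argument short.

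Your plan to compute the two pieces separately has a genuine gap at the $c_1$ step. You propose to count zeros of $f\circ u=u_1^{q+1}+u_2^{v+1}$ and close the computation with ``the total divisor degree of a meromorphic function on $\mathbb{C}P^1$ vanishes.'' But $f\circ u$ is not meromorphic: under the biholomorphism $\Phi:\mathbb{R}\times S^3\to L^*$, the holomorphic datum is $\Phi\circ u$, and $f$ composed with $\Phi$ is $f(e^a z)=e^{a(q+1)}z_1^{q+1}+e^{a(v+1)}z_2^{v+1}$, which does not factor through $f$ on $S^3$ unless $q=v$. Equivalently, $\Phi(\mathbb{R}\times K)$ is the positive real cone over the knot $K$, which is \emph{not} a $J_0$-complex subvariety of $L^*$ (one can check $f(rw)=w_1^{q+1}(r^{q+1}-r^{v+1})\neq 0$ for $w\in K$, $r\neq 1$), so the zeros of the section $k\circ u$ are intersections of $u$ with a totally real surface and carry no a priori sign. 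The relative Chern number $c_1^{\Phi}(u)$ is the \emph{signed} count of zeros of a generic section with the prescribed asymptotics, and your divisor-degree bookkeeping does not compute it. The same chamber-dependence you flag at the $\mu^{\overline{\Phi}}(\gamma^-)$ step compounds this: the winding of $f$ along $\gamma^-$ is $k(q+1)$ or $k(v+1)$ depending on whether $|z_1|^{q+1}\gtrless|z_2|^{v+1}$ on the orbit, and the cancellation that makes $2c_1^{\overline{\Phi}}(\overline{u})-\mu^{\overline{\Phi}}(\gamma^-)$ independent of the chamber is exactly the invariance of the Fredholm index that the paper exploits directly. Without establishing that cancellation (which would amount to reproving invariance of the index under trivialization change), your two separate computations do not visibly assemble into the claimed answer.
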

\begin{proof} Let $\overline{u}$ be given by (\ref{popinlens}) and (\ref{asypmofpopinlens}). We can safely assume that $\overline{u}(1)=(-\infty,k\overline{\gamma})$ and $\overline{\gamma}$ is away from the vanishing set of $\overline{\Phi}$. Then by (\ref{Fredholmindex}), the index of $\overline{u}$ is given by
\begin{eqnarray}\label{indexpantsunworked}
{\rm ind}\:(\overline{u})=2c_1^{\overline{\Phi}}(\overline{u})+\mu^{\overline{\Phi}}(k^0\overline{\gamma}_{0})+\mu^{\overline{\Phi}}(k^\infty\overline{\gamma}_{\infty})-\mu^{\overline{\Phi}}(k\overline{\gamma})+2.
\end{eqnarray}
Now instead of directly computing the unknown terms, we utilize the lifting procedure again. Let $u$ be a lift of $\overline{u}$. We consider $u$ as a representative of an unparametrized curve $C$ and compute the index via the formula (\ref{Fredholmindex}) and the trivialization $\Phi$ given by (\ref{Phi}). We get 
\begin{eqnarray*}
{\rm ind}\:(C)&=&2c_1^{\Phi}(u)+\mu^{\Phi}(k^0\gamma_0)+\mu^{\Phi}(k^\infty\gamma_\infty)-\sum_{i=1}^{p} \mu^{\Phi}(k\gamma_i)\\
&&+\frac{1}{2}\sum\limits_{i=1}^{p+2}\dim S_i + (p+2) -2 \\
&=&2c_1^{\Phi}(u)+\mu^{\Phi}(k^0\gamma_0)+\mu^{\Phi}(k^\infty\gamma_\infty)-\sum_{i=1}^{p} \mu^{\Phi}(k\gamma_i)
+2p+2
\end{eqnarray*}
where $\gamma_1,...,\gamma_p$ are $p$-distinct lifts of the simple contractible orbit $\overline{\gamma}$. Due to the equivariance, we have $\mu^{\Phi}(k\gamma_i)=\mu^{\overline{\Phi}}(k\overline{\gamma})$ for all $i$ and $c_1^{\overline{\Phi}}(\overline{u})=c_1^{\Phi}(u)/p$. Hence we have 
\begin{eqnarray}\label{c1barminusmugammabar}
2c_1^{\overline{\Phi}}(\overline{u})-\mu^{\overline{\Phi}}(k\overline{\gamma})
=\frac{1}{p}\left({\rm ind}\:(C)-\mu^{\Phi}(k^0\gamma_0)-\mu^{\Phi}(k^\infty\gamma_\infty)-2p-2\right).
\end{eqnarray}
Now we compute the right hand side of the above equation. A computation similar to the one carried out for $k_{0/\infty}\overline{\gamma}_{0/\infty}$ leads to the symplectic paths
$$\psi_\infty(t)=e^{it2\pi k^\infty(1-q)},\;\psi_0(t)=e^{it2\pi k^0(1-v)}$$
for $k^\infty \gamma_\infty$ and $k^0\gamma_0$ respectively. We first observe that the constant path $\mathcal{I}(t)=I$ leads to
\begin{eqnarray*}
\mu(\mathcal{I})=\frac{1}{2}(\mu(\mathcal{I})+\mu(\mathcal{I}))
=\frac{1}{2}(\mu(\mathcal{I})+\mu(\mathcal{I}^{-1}))=0
\end{eqnarray*}
by \ref{inverse}. 
For a general symplectic path of the form $\varphi=e^{itk2\pi}$, $k\in\mathbb{Z}$, viewing the inverse path as a loop and using \ref{maslov} we get 
$$0=\mu(\mathcal{I})=\mu(\varphi^{-1}\varphi)=\mu(\varphi)+2m(\varphi^{-1})=\mu(\phi)-2k\;\Rightarrow\; \mu(\varphi)=2k.$$
Hence we get
$$\mu^{\Phi}(k^\infty\gamma_\infty)=2k^\infty(1-q),\;\mu^{\Phi}(k^0\gamma_0)=2k^0(1-v).$$
Now for the term ${\rm ind}\:(C)$, we use the trivialization $\Phi_0$ given by (\ref{Phi0}). Note that since it is induced by a non-vanishing section, we have $c_1^{\Phi_0}(u)=0$. For any closed orbit 
\begin{eqnarray*}
k\gamma(t)=(e^{it}z_1,e^{it}z_2),\;t\in[0,2\pi k]
\end{eqnarray*}
the associated symplectic path reads as
\begin{eqnarray*}
\varphi(t)\Phi_0(e^{i t k 2\pi}z_1,e^{i t k 2\pi}z_2)&=&d\phi_{(t k2\pi)}(z_1,z_2)[(\overline{z}_2,-\overline{z}_1)]\\
\varphi(t)(e^{-i t k 2\pi}\overline{z}_1,e^{-i t k 2\pi}\overline{z}_2)&=&(e^{i t k 2\pi}\overline{z}_2,e^{i t k 2\pi}-\overline{z}_1)\\
\varphi(t)&=&e^{i t (2k) 2\pi}.
\end{eqnarray*}
Hence we get $\mu^{\Phi_0}(k\gamma)=4k$ and therefore 
\begin{eqnarray*}
{\rm ind}\:(C)&=&2c_1^{\Phi_0}(u)+\mu^{\Phi_0}(k^0\gamma_0)+\mu^{\Phi_0}(k^\infty\gamma_\infty)-\sum_{i=1}^{p} \mu^{\Phi_0}(k\gamma_i)\\
&&+\frac{1}{2}\sum\limits_{i=1}^{p+2}S_i + (p+2) -2 \\
&=&4k^0+4k^\infty-p4k+2p+2.
\end{eqnarray*}
Then (\ref{c1barminusmugammabar}) leads to
\begin{eqnarray*}
2c_1^{\overline{\Phi}}(\overline{u})-\mu^{\overline{\Phi}}(k\overline{\gamma})
&=&\frac{1}{p}\left(4k^0+4k^\infty-p4k-2k^0(1-v)-2k^\infty(1-q)\right)\\
&=&\frac{2}{p}\left(d+k^0v+k^\infty q\right)-2k.
\end{eqnarray*}
Substituting the above formula in (\ref{indexpantsunworked}) leads to the formula (\ref{indexpantsformula}).
\end{proof}
\begin{lem} \label{reg.pant.} For any $\overline{u}$ given by (\ref{popinlens}) and (\ref{asypmofpopinlens}), we have 
$${\rm ind}\:(\overline{u})=4+4d^I$$
where $d^I=\lfloor{ d/p}\rfloor$.
\end{lem}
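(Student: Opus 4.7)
The plan is to combine the explicit Conley–Zehnder computations (\ref{muofgammainfty}) and (\ref{muofgamma0}) with the index formula (\ref{indexpantsformula}) from Lemma \ref{indexpantslemma}, and then use the mod-$p$ arithmetic forced by the homotopy relation (Lemma \ref{homotopy}) to collapse everything to $4+4d^I$. The mental picture is that the Fredholm index computed analytically must match the geometric count of parameters obtained from the equivariant base-curve description (compare Remark \ref{dimension} and Corollary \ref{generaldim}), so the calculation below is essentially bookkeeping.

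First I substitute (\ref{muofgammainfty}) and (\ref{muofgamma0}) into (\ref{indexpantsformula}) to get
\begin{equation*}
{\rm ind}(\overline{u})=2\left\lfloor \tfrac{k^\infty(1-q)}{p}\right\rfloor+2\left\lfloor \tfrac{k^0(1-v)}{p}\right\rfloor+\tfrac{2}{p}\bigl(d+k^0v+k^\infty q\bigr)-2k+4.
\end{equation*}
I would then rewrite each floor via $\lfloor n/p\rfloor=(n-\{n\}_p)/p$, where $\{n\}_p\in\{0,1,\dots,p-1\}$ is the residue of $n$ modulo $p$. The terms $k^\infty(1-q)$ and $k^0(1-v)$ combine with $k^0v+k^\infty q$ to leave $k^0+k^\infty$, and using $k^0+k^\infty=d+pk$ (from the definition of $d$), the $-2k$ cancels and the formula reduces to
\begin{equation*}
{\rm ind}(\overline{u})=\tfrac{4d}{p}-\tfrac{2}{p}\bigl(\{k^\infty(1-q)\}_p+\{k^0(1-v)\}_p\bigr)+4.
\end{equation*}

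The final step is the key arithmetic observation. Writing $d=p\lfloor d/p\rfloor+\{d\}_p$, the claim ${\rm ind}(\overline{u})=4+4d^I$ is equivalent to
\begin{equation*}
\{k^\infty(1-q)\}_p+\{k^0(1-v)\}_p=2\{d\}_p.
\end{equation*}
By Lemma \ref{homotopy} applied to the configuration (\ref{asypmofpopinlens}), we have $k^0+qk^\infty\equiv 0$, hence $k^\infty\equiv -vk^0$ (using $vq\equiv 1$). Therefore
\begin{equation*}
k^\infty(1-q)\equiv k^\infty+k^0\equiv d,\qquad k^0(1-v)\equiv k^0-k^0v\equiv k^0+k^\infty\equiv d \pmod p,
\end{equation*}
so both residues equal $\{d\}_p$ and the identity holds. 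I expect the only subtle point to be keeping track of signs in the floor/residue manipulation, since $k^\infty(1-q)$ and $k^0(1-v)$ are negative when $q,v>1$; but this is handled cleanly by the convention $\lfloor n/p\rfloor=(n-\{n\}_p)/p$ used throughout, which is valid for all integers. Everything else is a direct substitution that any careful accounting confirms.
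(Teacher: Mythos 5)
Your proof is correct and follows essentially the same route as the paper: substitute the Conley--Zehnder formulas (\ref{muofgammainfty})--(\ref{muofgamma0}) into Lemma \ref{indexpantslemma}, then collapse the expression via mod-$p$ arithmetic on $k^\infty(1-q)$ and $k^0(1-v)$. The only (cosmetic) difference is how the key congruence $k^\infty(1-q)\equiv k^0(1-v)\equiv d\pmod p$ is sourced --- the paper uses (\ref{multiplicitiesforlens}) together with (\ref{multipanddegree}), writing the common residue as $r$, while you derive it directly from Lemma \ref{homotopy} and $vq\equiv 1$; both yield the same arithmetic identity and hence the same cancellation.
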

\begin{proof}
We write
$$k^\infty(1-q)=l_\infty+n_\infty p,\;\;k^0(1-v)=l_0+n_0 p\,;\; 0<l_\infty,l_0<p$$
so that 
$$\mu(k^\infty\overline{\gamma}_{\infty})=2n_\infty+1,\; \mu(k^0\overline{\gamma}_0)=2n_0+1.$$
We note that by (\ref{multiplicitiesforlens}),
$$k^\infty(1-q)\equiv k^0(1-v)\equiv k^\infty+k^0.$$
By (\ref{multipanddegree}) we also have $k^\infty+k^0=r+p(d^I+k)$ where $0<r<p$. In particular, $k^\infty+k^0\equiv r$. 
Hence we conclude that $r=l_1=l_2$. Combining all these, we get 
\begin{eqnarray*}
 {\rm ind}\:(\overline{u})&=&\frac{2}{p}\left(k_1q+k_2u+d\right)+ \mu^{\overline{\Phi}}(k^\infty\overline{\gamma}_{\infty})+\mu^{\overline{\Phi}}(k^0\overline{\gamma}_0)-2k+2\\
&=&\frac{2}{p}\left(k^\infty -r-n_\infty p+k^0 -r-n_0 p +r +pd^I\right)\\
&&\;\;\;+(2n_\infty+1)+(2n_0+1)-2k+2\\
&=&\frac{2}{p}\left(k^0+k^\infty  -r +pd^I-(n_0+n_\infty)p\right)\\
&&\;\;+2(n_0+n_\infty)-2k+4\\
&=&\frac{2}{p}\left(r+pd^I+pk -r +pd^I-(n_0+n_\infty)p\right)\\
&&\;\;\;+2(n_0+n_\infty)-2k+4\\
&=&\frac{2}{p}\left(2pd^I+pk-(n_0+n_\infty)p\right)+2(n_0+n_\infty)-2k+4\\
&=&\left(4d^I+2k-2(n_0+n_\infty)\right)+2(n_0+n_\infty)-2k+4\\
&=&4d^I+4.
\end{eqnarray*}
\end{proof}
We list the outcomes of above discussion below. 
\begin{rmk}\label{Jalpharegularforpants}(Regularity of $J_\alpha$)
As a first corollary, we conclude that any non-empty component of the moduli space (\ref{popinlens})-(\ref{asypmofpopinlens}) is cut out transversally. In fact, the identity above shows that the quotient almost complex structure 
$J_\alpha$ on $\mathbb{R}\times L(p,q)$ is regular for any $\overline{u}$ given by  (\ref{popinlens})-(\ref{asypmofpopinlens}). The holomorphic perturbations of $\overline{u}$
are in one to one correspondence with the equivariant holomorphic perturbations of the lift $u$. 
But the dimension of these perturbations is equal to the equivariant index associated to 
$\overline{u}$ and therefore to ${\rm ind}(\overline{u})$ by the above lemma. Hence the kernel of the corresponding Cauchy-Riemann
operator has the dimension equal to the index and the cokernel is trivial. 
\end{rmk}
\begin{rmk} \label{moduli.lens.}(Components of the moduli space) The equivariant index makes easy to determine the components of the moduli space of pants with 
two non-contractible ends. In the case above, when $d^I$ and the multiplicity of the contractible end are
fixed, we have $p-1$ components,
corresponding to each  value of $0<r<p$ and therefore to each non-trivial homotopy class of ends. 
We note that the minimal index for the moduli space of pair of pants is 4 and it corresponds to $d^I=0$.
Each component
may be identified with $\mathbb{C}^*\times\mathbb{C}^*$, where one factor stands for the freedom of moving $\lambda$
and the other corresponds to the rescaling of the meromorphic section.

We note that once $d^I$ is fixed, one may increase multiplicities of all ends simultaneously in such a way that 
the degree condition is satisfied and the index is unchanged. Hence, the big moduli space of pair of pants where
the asymptotics at punctures $0,\infty$ are fixed geometrically as above, can be written as 
$$\bigcup\limits_{d^I\geq 0}\bigcup\limits_{k \geq 0}\bigcup\limits_{a=1}^{p-1}\mathcal{M}_{d^I,k,a};
\; \dim \mathcal{M}_{d^I,k,a}=4+4d^I,$$
where $k$ is the multiplicity of the contractible end.
\end{rmk} 
\begin{center}
\emph{Adding more contractible ends}
\end{center}

The discussion above can be repeated with minor modifications for other curves with only two non-contractible ends.

Adding more contractible ends to the pair of pants configuration does not require any essential change. To be 
more specific, lets consider the moduli space $\mathcal{M}$ of unparametrized curves with two non-contractible
positive ends, that are fixed geometrically as above, $s^+$-many contractible positive ends and $(s^-+1)$-many negative contractible ends. As noted in Remark \ref{paramunparam}, we interpret
$\mathcal{M}$ as the set of tuples 
$$\left(\overline{u}, (z_1,..,z_{s^+}),(w_1,...,w_{s^-})\right)$$
where $\overline{u}$ has the positive ends at $0$  asymptotic to $k_2\overline{\gamma}_0$ and at $\infty$
asymptotic to $k_1\overline{\gamma}_{\infty}$, a negative end at $1$ of multiplicity $k$, positive ends at 
$z_1,..,z_{s^+}$ of multiplicities $l_1^+,...,l_{s^+}^+$, negative ends at $w_1,...,w_{s^-}$ 
of multiplicities $l_{1}^-,..., l_{s^-}^-$. That is we fix tree punctures and let the rest of the 
punctures move. Given such an object, we have a lifted base curve $c$ with $r>0$ and ${\rm deg}(c)=r+pd^I$
and 
$$-k^\infty-k^0-p\sum l_i^++pk+p\sum l_j^-=-(r+pd^I).$$
The multiplicities $k_i$ are determined in$\mod p$ as before. The dimension of the component of  
$\left(\overline{u}, (z_1,..,z_{s^+}),(w_1,...,w_{s^-})\right)$ is then 
$$4+4d^I+2s^++2s^-$$
since we added the freedom of choosing the places of zeros and poles of the meromorphic section 
other than $0,\infty$ and the lifts of $1$. Yet in the lift, lifts of the remaining zeros and poles
should be distributed invariantly. Moreover an analysis similar to above shows that 
$${\rm ind}\:(\overline{u}, (z_1,..,z_{s^+}),(w_1,...,w_{s^-}))=4+4d^I+2s^++2s^-.$$
In order to describe the components of the moduli space, one simply adds free zeros and poles to the pair of pants configurations and the multiplicities of the non-contractible
ends are adjusted to get the degree condition is satisfied. For fixed values of $d^I$, $a$ and $k$ as before,  each component of the moduli space may be identified with 
$$\mathcal{M}_{d^I,k,a}\times\, \mathfrak{S}^{(s^++s^-)}(\mathbb{C}P^1\setminus \{0,1,\infty\})$$
where $\mathcal{M}_{d^I,k,a}$ is given in Remark \ref{moduli.lens.}
\begin{center}
\emph{The cylinders}
\end{center}

For the cylinders, we first consider the parametrized cylinders and then mode out  
biholomophisms that fix the punctures. 
In our case, this corresponds to removing the contractible end from the configurations of the pair of pants with one positive and one 
negative non-contractible ends and killing the freedom in the domain. 
More concretely, lets consider the moduli space of cylinders with 
positive end asymptotic to $k^0\overline{\gamma}_0$ and negative end asymptotic to $k^\infty\overline{\gamma}_{\infty}$.  The equivariant 
index of parametrized cylinders is again $4+4d^I$ and moding out reparametrization means 
we kill the freedom of moving $\lambda$ in the definition of the lifted curve $c$, see Lemma \ref{cforlens}. Hence the equivariant index reads as $2+4d^I$. 
On the other hand, the Fredholm index reads as
$${\rm ind}([\overline{u}])=2c^{\overline{\Phi}}_1(\overline{u})+\mu^{\overline{\Phi}}(k^0\overline{\gamma}_0)-\mu^{\overline{\Phi}}(k^\infty\overline{\gamma}_{\infty}).$$
We know that a lift $u$ lies above a closed curve $c$ so that  $k^\infty-k^0=-{\rm deg}(c)$ and similar to Lemma \ref{indexpantslemma} we get
$$c_1(\overline{u})=k^0v-k^\infty q+k^\infty-k^0.$$
Combining all these, one can show that ${\rm ind}([\overline{u}])=4d^I+2$ and 
therefore we have transversality for cylinders as well. 
Concerning the big moduli space, we have two parameters, namely $d^I$ and $a$ that index the components.

\section{An application}
In this section, we carry out a neck-stretching procedure, which is initiated by a positive contactomorphism. We perturb $J_\alpha$- holomorphic pair of pants with non-contractible positive ends in $\mathbb{R}\times L(p,q)$, which come from a particular component of the moduli space and at the end we would like to obtain a very particular holomorphic building, see Figure \ref{outline}.
\begin{figure}[h]
\includegraphics[scale=0.22]{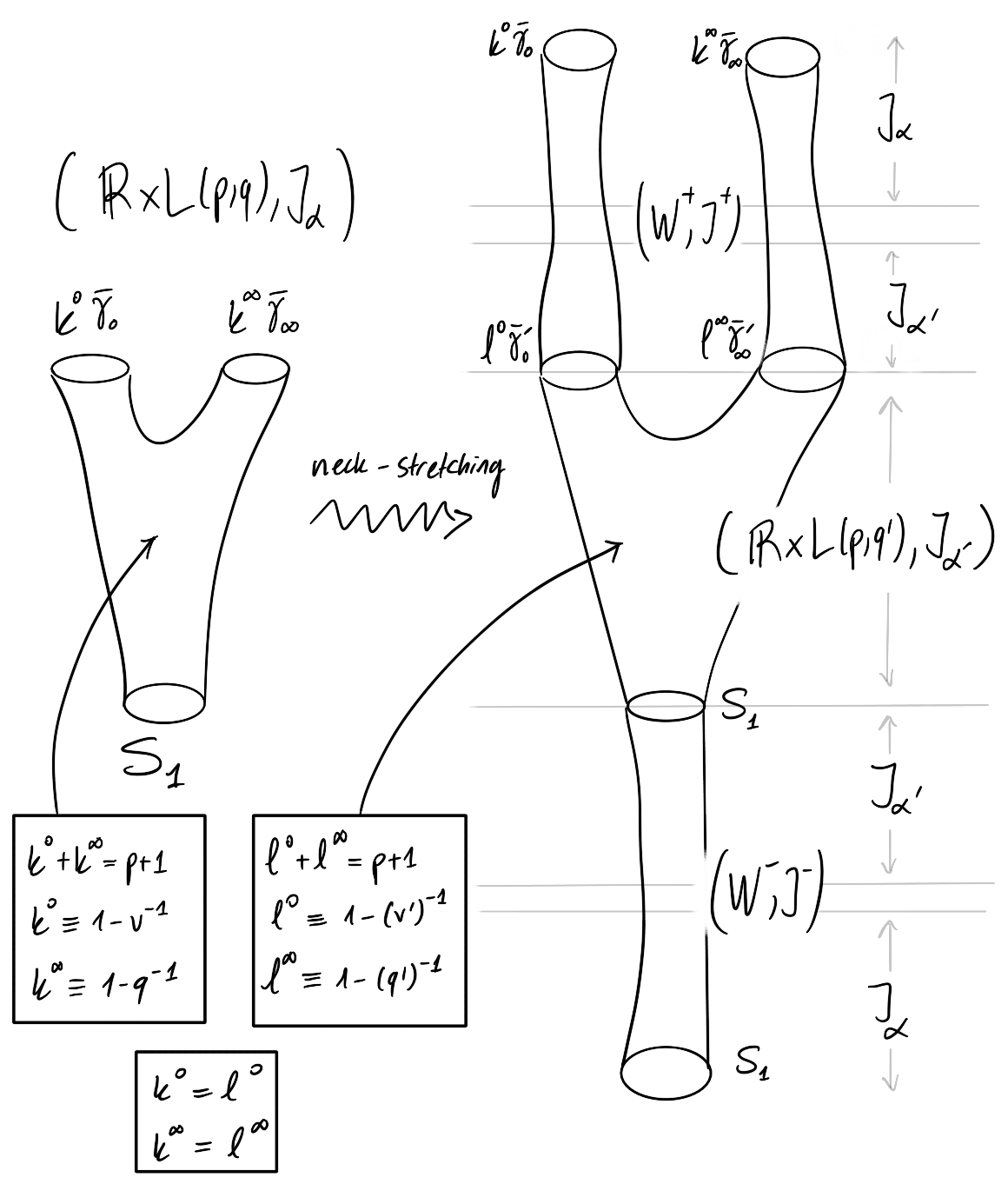}
\centering
\caption{The outcome of the neck-stretching procedure.} 
\label{outline}
\end{figure}

In general it is very unlikely to achieve such a picture since there are the common issues of transversality and compactness. It turns out that the study of the index behaviour of multiples of non-contractible Reeb orbits does not give enough control to handle these issues. The crucial observation is that one can apply well-established $4$-dimensional methods to the lifts of punctured curves in the symplectic cobordisms that show up along the way, after extending them to closed curves in certain completions of these cobordisms. The outcome of this observation is that given any holomorphic curve in a cobordism, the total action of its positive ends is greater or equal to the total action of its negative ends. We note that this property holds in general for symplectizations and for cobordisms where the contact forms at ends are suitably rescaled. But here we achieve this property for the cobordisms for which contact forms are not rescaled and this provides a strong control over the components of the limiting buildings that emerge along the neck-stretching procedure. As a result we prove the following. 
\begin{thm}\label{teo2} Let $p$ be prime and $1<q,q'<p-1$.
Suppose that there is a positive contactomorphism 
\begin{equation}\label{contacto}
\varphi: \left(L(p,q),\xi=\ker \alpha\right)\rightarrow \left(L(p,q'),\xi'=\ker \alpha'\right).    
\end{equation}
Then $q\equiv (q')^{\pm1}\mod p$.
\end{thm}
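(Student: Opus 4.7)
The plan is to use the contactomorphism $\varphi$ to initiate a neck-stretching in which a reference pair of pants in $\mathbb{R}\times L(p,q)$ produces, in the limit, a pair of pants in $\mathbb{R}\times L(p,q')$ sharing analogous positive non-contractible ends. Comparing the homotopy relations encoded at the two sets of positive ends (see Figure \ref{outline}) then forces $q\equiv(q')^{\pm 1}\mod p$.

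\textbf{Setting up.} By Lemma \ref{reg.pant.}, the component $\mathcal{M}_{0,k,a}$ of the moduli space of $J_\alpha$-holomorphic pair of pants with two non-contractible positive ends is cut out transversally; fix $\overline{u}\in\mathcal{M}_{0,k,a}$ with the simplest admissible multiplicities, say $k^\infty=1$ and $k^0\equiv -q\mod p$ so that Lemma \ref{homotopy} is satisfied. Using the identity $\varphi^*\alpha'=f\alpha$ with $f>0$, build a symplectic cobordism by interpolating the Liouville forms $e^a\alpha$ and $e^a(f\alpha)$ on $\mathbb{R}\times L(p,q)$, so that via $\varphi$ the positive end is identified with $(\mathbb{R}\times L(p,q'),d(e^a\alpha'))$. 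Equip this cobordism with an $R$-family of cylindrical-at-ends almost complex structures, agreeing with $J_\alpha$ near $-\infty$ and with $\varphi^*J_{\alpha'}$ near $+\infty$, and inserting an $R$-long neck somewhere in between. Carry $\overline{u}$ into this family to obtain holomorphic curves $\overline{u}_R$ and pass to the SFT-compactness limit as $R\to\infty$.

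\textbf{Limit building and comparison.} The limit is a holomorphic building whose top level lies in $\mathbb{R}\times L(p,q')$ and whose bottom level lies in $\mathbb{R}\times L(p,q)$. Two ingredients pin down its structure. First, the regularity of $J_\alpha$ on every admissible moduli component (Remark \ref{Jalpharegularforpants}), combined with the explicit dimension formulas of Lemma \ref{indexpantslemma} and Corollary \ref{generaldim}, controls the possible indices and asymptotic profiles at each level. Second, the action inequality for holomorphic curves in the cobordism, previewed at the start of Section 4 and proved by lifting punctured curves to $\mathbb{R}\times S^3$, extending them to closed curves in a singular compactification, and applying $4$-dimensional intersection theory, forbids cascades that increase total action. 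Together they force the building to take the form of Figure \ref{outline}: a pair of pants $\overline{u}'$ in $\mathbb{R}\times L(p,q')$ with two non-contractible positive ends at the top, a pair of pants in $\mathbb{R}\times L(p,q)$ from the component $\mathcal{M}_{0,k,a}$ at the bottom, and trivial cylinders over the matching Reeb orbits in the neck. Applying Lemma \ref{homotopy} to $\overline{u}'$ yields a congruence $l^0+q'l^\infty\equiv 0\mod p$; the matching orbits identify $(l^0,l^\infty)$ with $(k^0,k^\infty)$ up to the automorphism $\varphi_*:\mathbb{Z}_p\xrightarrow{\sim}\mathbb{Z}_p$ on fundamental groups, which sends $[\overline{\gamma}_0]$ to a generator of $\pi_1(L(p,q'))$. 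A short case analysis over the possible images of $[\overline{\gamma}_0]$ collapses to $q\equiv(q')^{\pm 1}\mod p$.

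\textbf{Main obstacle.} Controlling the limit building. Dimension counting and regularity restrict the generic strata, but ruling out breakings with extra planes, pair of pants with additional contractible ends, or higher-genus components requires the action inequality, which is itself the substantive new geometric input of Section 4 and relies on a careful intersection-theoretic analysis of the lifts to $\mathbb{R}\times S^3$. Once that analysis is in hand, the rest of the argument—choice of reference curve, neck-stretching, and the final algebraic chase—fits into the framework already developed in Sections 2 and 3.
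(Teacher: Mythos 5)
Your overall strategy matches the paper's: pick a reference pair of pants in $\mathbb{R}\times L(p,q)$, stretch the neck across the hypersurface corresponding to $\varphi$, control the limiting building via the action inequality (Propositions \ref{actiondecrease} and \ref{actiondecreasefinal}), and read off the congruence from the middle-level pair of pants in $\mathbb{R}\times L(p,q')$. However, there are two substantive gaps.

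First, the pair of pants moduli space $\mathcal{M}$ you choose is $4$-dimensional, so the phrase ``carry $\overline{u}$ into this family to obtain holomorphic curves $\overline{u}_R$'' hides the central technical device of the paper's proof: the constraint $\mathcal{M}^0 = {\rm ev}^{-1}\left((\log f(x_0), x_0)\right)$ via the evaluation map $(\ref{evlens})$ at a free marked point, which cuts $\mathcal{M}$ to a $0$-dimensional space and makes a cobordism/continuation argument possible. Without this point constraint there is no reason a curve with the prescribed asymptotics survives the deformation of $J$, and one cannot even formulate the persistence claim needed to start the neck-stretching. The paper spends the bulk of the proof setting up this constrained problem, choosing $x_0$ carefully (so that both $x_0$ and $\varphi(x_0)$ avoid the special orbits, cf.~Lemma~\ref{needfancyx0}), and proving a $1$-parameter compactness result for $\bigcup_t\mathcal{M}^0(J_t)$.

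Second, the choice of multiplicities is wrong, and not merely cosmetically. You propose $k^\infty=1$ and $k^0\equiv -q$. But the pair of pants must have $k^0+k^\infty = d + pk$ with $d>0$ and $k\geq 1$ (the negative contractible end has multiplicity $k\geq 1$), so $k^0+k^\infty \geq p+1$; with $k^\infty=1$ you would need $k^0\geq p$. Even allowing that, the total action of the positive ends would be strictly larger than $2\pi(1+1/p)$, and the action-window arguments of the paper (ruling out finite energy planes, ruling out the $\mathcal{A}^+=\mathcal{A}^-=2\pi(1+1/p)$ and $\mathcal{A}^+=\mathcal{A}^-=2\pi$ configurations, and pinning down $l^0+l^\infty=p+1$ for the middle-level curve) depend on the tight choice $k^0\equiv(1-v)^{-1}$, $k^\infty\equiv(1-q)^{-1}$ with $k^0+k^\infty=p+1$. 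Relatedly, the final identification of $(l^0,l^\infty)$ with $(k^0,k^\infty)$ comes from the action inequality applied to the connecting cylinders $C_1$, $C_2$ together with the equalities $l^0+l^\infty=k^0+k^\infty=p+1$ — not from tracking $\varphi_*$ on fundamental groups as you suggest; the congruence $q\equiv(q')^{\pm 1}$ then follows directly from the relations $l^0\equiv(1-v')^{-1}$ (or $(1-q')^{-1}$, in the swapped case) and $k^0\equiv(1-v)^{-1}$.
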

\subsection{The proof of the statement}
We take two lens spaces $L(p,q)$ and $L(p,q')$, where $p,q$ and $q'$ satisfies the assumptions of Theorem \ref{teo2}, with the contact structures induced by contact forms $\alpha$ and $\alpha'$,
which are quotients of $\alpha_0$,
see the introduction.

We consider the moduli space of $J_\alpha$- holomorphic pair of pants with non-contractible positive ends in the symplectization of $L(p,q)$, which has the minimal index. By the consideration of equivariant curves in $\mathbb{R}\times S^3\cong L^*$, see section 3,
we know that minimal index for this problem is 4 and 
we have a moduli space with countably many components that can be collected in $p-1$ groups each group  
being determined by the degree of the underlying closed curve, see Remark \ref{moduli.lens.}. 

We consider the component, denoted by $\mathcal{M}$, 
associated to the underlying closed curve with degree one and 
minimal multiplicity of non-contractible ends. More precisely,
$\mathcal{M}$ is the moduli space of pair of pants 
$$u=(a,v):\mathbb{C}P^1\setminus \Gamma \rightarrow \mathbb{R}\times L(p,q)\cong\overline{L}^*$$
with punctures $\Gamma=\{0,1,\infty\}$ and asymptotics
\begin{eqnarray}\label{pantsforproof}
u(0)=(+\infty, k^0 \overline{\gamma}_0),\,u(\infty)=(+\infty,k^\infty \overline{\gamma}_{\infty}),\,u(1)\in\{-\infty\}\times S_1 
\end{eqnarray}
where $S_1$ is the orbit space of the contractible orbits of action $2\pi$. We know from the previous section that 
\begin{eqnarray}\label{pantsforproofmultiplicities}
  k^0\equiv (1-v)^{-1},\;k^\infty\equiv (1-q)^{-1},\; k^0+k^\infty=p+1.
\end{eqnarray}
Next, we  impose a 4-dimensional constraint on $\mathcal{M}$ as follows. 
We pick a point $x_0\in L(p,q)$ away from the non-contractible orbits and with the property that $\varphi(x_0)$ is also away from the non-contractible orbits in $L(p,q')$. Given the contactomorphism in (\ref{contacto}), we have a positive function 
\begin{equation}\label{f}
  f:L(p,q)\rightarrow (0,+\infty)\;\;{\rm s.t.}\;\;  \varphi^*\alpha'=f\alpha. 
\end{equation}
Now consider the evaluation map
\begin{eqnarray}\label{evlens}
{\rm ev}:\mathcal{M}\rightarrow \mathbb{R}\times L(p,q),\; {\rm ev}(u)=u(2). 
\end{eqnarray}

We cut out a 0-dimensional submanifold of $\mathcal{M}$ via
\begin{equation}\label{cutout_J0}
\mathcal{M}^0:={\rm ev}^{-1}\left((\log f(x_0),x_0)\right). \end{equation}
By the description of the curves in $\mathcal{M}$ given in the previous section, it is easy to see that $\mathcal{M}^0$ is cut out transversely and consists of a 
single curve. In fact one notes that $x_0$ lies on some contractible orbit, say $\gamma$ and considering the equivariant lifts of the curves in $\mathcal{M}$, $\gamma$ as a point in the orbit space is the image of the point $2$ under the underlying base curve and this choice fixes the parameter $\lambda\in \mathbb{C}$ given in Lemma \ref{cforlens}. Then the choice of $x_0\in \gamma$ and the quantity $f(x_0)$ fixes the freedom over the equivariant meromorphic section, which is due to the $\mathbb{C}^*$-action on $L$ . 
\begin{rmk}\label{choiceofx0} The conditions on the point $x_0$ are  easily satisfied and play a significant role in ruling out certain unpleasant configurations at the end of the neck-stretching argument, see Lemma \ref{needfancyx0}. 
\end{rmk}  
We consider the following exact symplectomorphism
\begin{align}\label{exactsymplecto}
\Phi : \left(\mathbb{R}\times L(p,q'), d(e^t\alpha')\right)&\rightarrow \left(\mathbb{R}\times L(p,q), d(e^s\alpha)\right),\\ 
 (t,x)&\mapsto \left(t+\log f\circ\varphi^{-1}(x),\varphi^{-1}(x)\right).\nonumber
\end{align}
Let $\Sigma:=\Phi(\{0\}\times L(p,q'))$ be the contact type hypersurface in $\mathbb{R}\times L(p,q)$. Then we have 
\begin{eqnarray}\label{defnofNs}
N^+\cup N^-:=\left(\mathbb{R}\times L(p,q)\right)\setminus\Sigma
\end{eqnarray}
Where $N^+$ is the upper and $N^-$ is the lower connected component. We fix $\varepsilon>0$ and put 
\begin{align}\label{defnofU}
U:= \Phi\left([-\varepsilon,\varepsilon]\times L(p,q')\right)\subset\mathbb{R}\times L(p,q).   
\end{align}
We fix an open subset $V\subset \mathbb{R}\times L(p,q)$ such that $\overline{U}\subset V$.
For the later purposes we set
\begin{align}\label{defnofV}
V^+\sqcup V^-:=V\setminus \overline{U}    
\end{align}
where $V^+$ and $V^-$ are the upper and lower components respectively. 
Let $n$ be a positive integer. Following \cite{foliations}, we construct the symplectic cobordism $W^n$ as follow. We remove $\Phi\left((-\varepsilon/2,\varepsilon/2)\times L(p,q')\right)$ from $\mathbb{R}\times L(p,q)$ we glue $[-\varepsilon-n,n+\varepsilon]\times L(p,q)$ in the middle via the following identifications
\begin{align}\label{defnofWn}
\begin{split}
   [-\varepsilon-n,-n-\varepsilon/2]\times L(p,q')\ni(t,x)\sim \Phi(t+n,x)\in U,\\
   [n+\varepsilon/2,n+\varepsilon]\times L(p,q')\ni(t,x)\sim \Phi(t-n,x)\in U.
\end{split}   
\end{align}
We consider a smooth function $\phi_n:[-\varepsilon-n,n+\varepsilon]\rightarrow [-\varepsilon,\varepsilon]$ such that
\begin{itemize}
    \item $\phi_n'>0$,
    \item $\phi_n(t)=t+n$ for $t\in [-\varepsilon-n,-\varepsilon/2-n]$,
    \item $\phi_n(t)=t-n$ for $t\in [n+\varepsilon/2,n+\varepsilon]$,
    \item $\phi_n(0)=0$.
\end{itemize}
Such a function leads to a diffeomorphism
\begin{equation}\label{Phi_n}
\Phi_n:W^n\rightarrow \mathbb{R}\times L(p,q)
\end{equation}
where $\Phi_n={\rm id}$ on $\left(\Phi\left((-\varepsilon/2,\varepsilon/2)\times L(p,q)\right)\right)^c$ and 
$$\Phi_n(t,x)=\left(\phi_n(t)+\log f\circ\varphi^{-1}(x),\varphi^{-1}(x)\right)$$
on $[-\varepsilon-n,n+\varepsilon]\times L(p,q')$. For later purposes we note that 
\begin{equation}\label{evaluation_makes_sense_for_Jn}
\Phi_n(0, \varphi(x_0))=\left(\phi_n(0)+\log f\circ \varphi^{-1}(\varphi(x_0)), \varphi^{-1}(\varphi(x_0)\right)=\left(\log f(x_0), x_0\right).    
\end{equation}
We consider the exact symplectic form
\begin{align}\label{omega_n}
\omega_n:=\Phi^*_nd(e^s\alpha)=d\left(\Phi^*_n(e^s\alpha)\right)    
\end{align}
on $W^n$, which reads as
\begin{align}\label{omega_nreadsas}
 \omega_n=\left\{
\begin{array}{ll}   
d(e^s\alpha)&{\rm on}\; U^c\\
d(\phi_n\alpha')& {\rm on }\;[-\varepsilon-n,n+\varepsilon]\times L(p,q'). 
\end{array} 
\right.
\end{align}
We note that the standard almost complex structure $J_{\alpha'}$ is compatible with the symplectic form $d(\phi_n\alpha')$. Next we consider an almost complex structure $J_n$ on $\mathbb{R}\times L(p,q)$ with the following properties:
\begin{enumerate}[label=\textrm{(Jn\arabic*)}]
    \item \label{JnonVc}$J_n=J_\alpha$ on $V^c$,
    \item \label{Jnmiddle}$(d\Phi_n)^{-1}\circ J_n\circ d\Phi_n=J_{\alpha'}$ on $[-\varepsilon-n,n+\varepsilon]\times L(p,q')$,
    \item \label{Jnregular}$J_n$ is regular for any simple curve passing through the open subset $V^+\sqcup V^-$ and if relevant, satisfying 
    \begin{eqnarray}\label{cutout_Jn}
    {\rm ev}(u)\in \left\{(\log f(x_0),x_0)\right\}\subset \mathbb{R}\times L(p,q),
    \end{eqnarray}
    where the evaluation map ${\rm ev}$ is given by (\ref{evlens}).
   \item\label{Jnupperlowerregular}
When viewed as an almost complex structure on 
$$(N^+\setminus U)\cup \left((-\infty,n+\varepsilon]\times L(p,q')\right),$$ 
$J_n$ is regular for any simple curve passing through $V^+$ and satisfying an priori constraint. The corresponding statement holds for simple curves passing through $V^-$ when $J_n$ is viewed as an almost complex structure on 
$$\left([-n-\varepsilon,+\infty)\times L(p,q')\right)\cup (N^-\setminus U).$$
    \item \label{Jncompatible} $J_n$ is compatible with $d(e^s\alpha)$.
\end{enumerate}
\begin{figure}[h]
\includegraphics[scale=0.21]{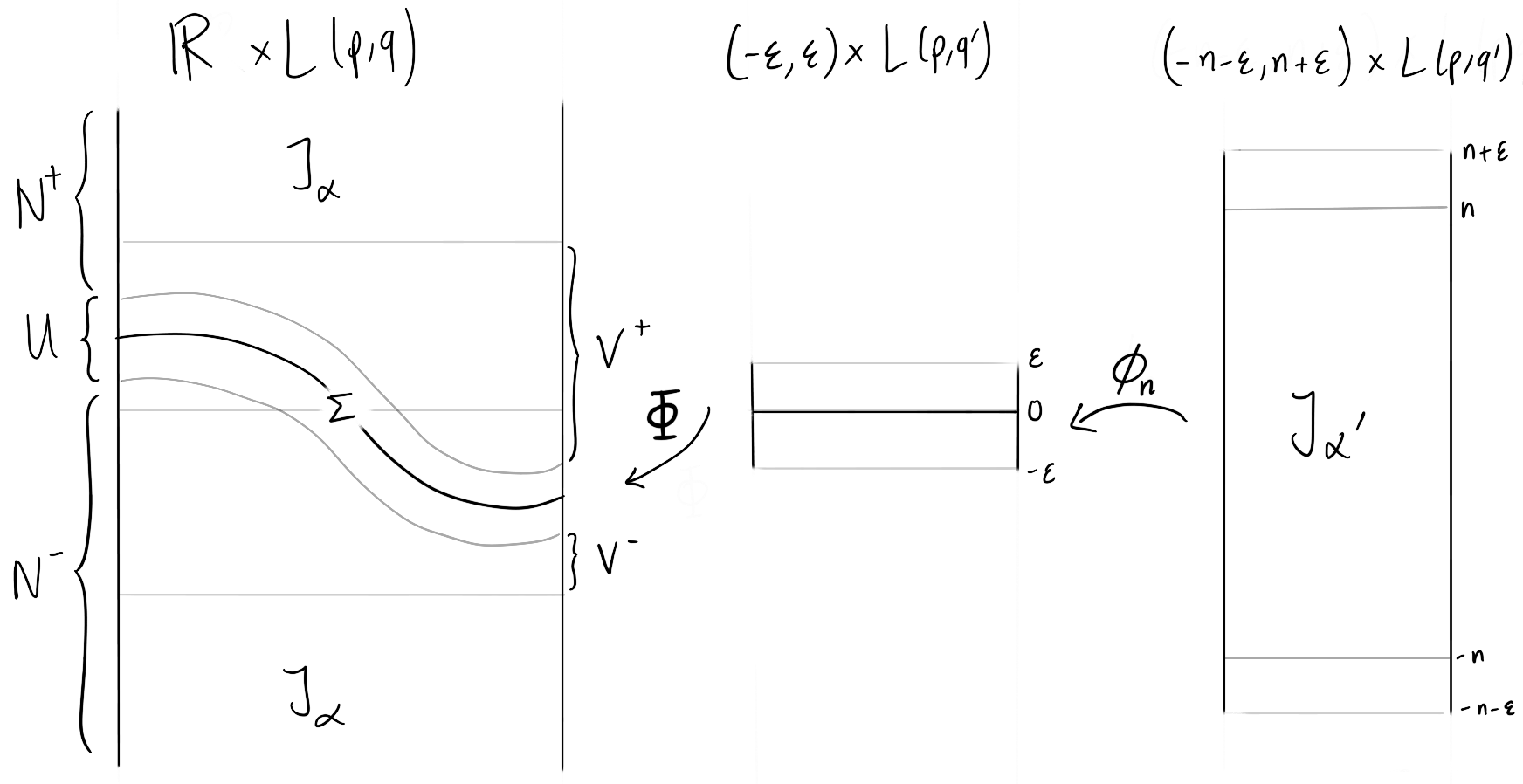}
\centering
\caption{The construction of $(W^n,J_n)$.}
\label{setting}
\end{figure}
We note that one can find an almost complex structure $J_n$ without the condition \ref{Jnregular} being satisfied. In fact, we can construct $J_n$ on by interpolating between $J_\alpha$ and the restriction of $d\Phi_n\circ J_{\alpha'}\circ (d\Phi_n)^{-1}$ to $U$ on the region $V^+\sqcup V^-$ via almost complex structures compatible with $d(e^s\alpha)$. Since $J_{\alpha'}$ is compatible with $d(\phi\alpha')$, $d\Phi_n\circ J_{\alpha'}\circ (d\Phi_n)^{-1}$ is compatible with $d(e^s\alpha)$.  Once such a reference almost complex structure is fixed, after a generic perturbation of it on the open set $V^+\sqcup V^-$, we get an almost complex structure $J_n$, for which any simple curve passing through the above open set is Fredholm regular, see Theorem 7.1 in \cite{wendlSFT}. Moreover, via an argument similar to the one in Lemma 2.5 of \cite{seidel}, we may assume that the evaluation map in (\ref{evlens}) is transverse for any simple parametrized curve passing through the the above open set. In fact, one can impose the transversality condition of the evaluation map at level of universal moduli space and show that the universal moduli space of simple curves with constraints is a smooth manifold and then one gets a generic perturbation of the almost complex structure. 

Now we consider the moduli space $\mathcal{M}^0(J_n)$ of $J_n$- holomorphic pair of pants with the asymptotics given by (\ref{pantsforproof}) and (\ref{pantsforproofmultiplicities}), satisfying (\ref{cutout_Jn}). 
\begin{lem}\label{popsaresimple} Any $J_n$- holomorphic pair of pants satisfying (\ref{pantsforproof}) and (\ref{pantsforproofmultiplicities}) is simple.
\end{lem}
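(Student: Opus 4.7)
The plan is to argue by contradiction: suppose $\overline{u}=\overline{u}'\circ\phi$ for some holomorphic branched cover $\phi\colon \mathbb{C}P^1\to\mathbb{C}P^1$ of degree $d\ge 2$ and some simple $J_n$-holomorphic curve $\overline{u}'$. From the asymptotic data (\ref{pantsforproof}), together with the multiplicity constraint $k=1$ at the negative puncture coming from (\ref{pantsforproofmultiplicities}) and the minimal-index base-curve computation of Section 3, I would derive a contradiction using only standard properties of multiply-covered punctured holomorphic curves.

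The first step is to record the standard consequences of such a factorization: one has $\phi^{-1}(\Gamma')=\Gamma=\{0,1,\infty\}$ (up to removing any puncture of $\overline{u}'$ over which $\overline{u}$ would extend), the sign of each puncture is preserved by $\phi$, and if $z\in\Gamma$ has local ramification index $r$ under $\phi$, then the asymptotic orbit of $\overline{u}$ at $z$ is the $r$-fold iterate of the asymptotic orbit of $\overline{u}'$ at $\phi(z)$. In particular, any two punctures of $\overline{u}$ that map to the same puncture of $\overline{u}'$ must share both the sign and the underlying \emph{simple} Reeb orbit.

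The core of the argument is then the observation that the three punctures of $\overline{u}$ cannot collapse under $\phi$. The punctures at $0$ and $\infty$ are both positive but have the distinct non-contractible simple orbits $\overline{\gamma}_0$ and $\overline{\gamma}_\infty$ as their underlying orbits, so they must map to different punctures of $\overline{u}'$; the puncture at $1$ is negative while the other two are positive, so it cannot collapse with either of them. Hence $|\Gamma'|=3$ and every puncture of $\overline{u}'$ has exactly one preimage under $\phi$, which forces $\phi$ to be totally ramified at each puncture of $\overline{u}$ with ramification index equal to $d$. The asymptotic multiplicity of $\overline{u}$ at $z=1$ is then $d$ times the asymptotic multiplicity of $\overline{u}'$ at $\phi(1)$; since $\overline{u}$ has multiplicity $1$ at $z=1$, we conclude $d=1$, contradicting $d\ge 2$.

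The only point that needs a little care, rather than being a real obstacle, is checking that the negative asymptotic at $z=1$ genuinely is a \emph{simple} orbit, so that the value $k=1$ records the true cover number and does not hide at a singular point of $S_1$. This follows from the equivariant lift described by Lemma \ref{cforlens}: in the minimal-index case the $p$ negative ends of the lifted curve over $S^3$ are simple closed orbits of action $2\pi$, and they descend to simple orbits of action $2\pi$ in $L(p,q)$ because the $\mathbb{Z}_p$-action is free away from $\gamma_0$ and $\gamma_\infty$. Once this is in place, the argument is purely combinatorial: it uses no analytic input beyond the SFT asymptotic convergence theorem already implicit in the description of how asymptotic multiplicities transform under a branched cover.
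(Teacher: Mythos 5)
Your proof follows the paper's route up to the point of deriving that the underlying simple curve $\overline{u}'$ must itself be a pair of pants and that $\phi$ is totally ramified at each of the three punctures with ramification index $d$. That part is correct. But the final step has a genuine gap, and the ``point that needs a little care'' you flag is in fact exactly where the argument breaks.

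The constraint $\overline{u}(1)\in\{-\infty\}\times S_1$ says that the negative asymptotic orbit is \emph{contractible} and has \emph{action} $2\pi$; it does \emph{not} say the orbit is simple. The orbit space $S_1$ contains the points corresponding to $p\overline{\gamma}_0$ and $p\overline{\gamma}_\infty$, which are $p$-fold covers of simple orbits of period $2\pi/p$. So ``$k=1$'' records the action level, not the cover number of the underlying simple orbit. Your attempted repair invokes Lemma \ref{cforlens} and the equivariant lift picture to argue that the negative end stays away from $\overline{\gamma}_0,\overline{\gamma}_\infty$, but that entire machinery (lifted base curves, meromorphic sections of $c^*L$, etc.) is specific to $J_\alpha$-holomorphic curves, where $J_\alpha$ descends from the integrable $J_0$ on $L^*$. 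Lemma \ref{popsaresimple}, however, is about \emph{$J_n$}-holomorphic curves, for which no such lift description exists; so Lemma \ref{cforlens} simply does not apply, and your proof cannot conclude $d=1$ once the possibility that the negative end is $p\overline{\gamma}_0$ or $p\overline{\gamma}_\infty$ is on the table. The paper instead closes the gap with a purely arithmetic observation that needs no information about the lift: if $d>1$ then total ramification at the three punctures forces $d$ to divide $k^0$, $k^\infty$, and the negative multiplicity; the latter divides $p$, so $d=p$ since $p$ is prime; but then $p\mid k^0$ and $p\mid k^\infty$, which is impossible since $k^0+k^\infty=p+1\equiv 1\pmod p$. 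Replacing your final step with this divisibility argument would repair the proof.
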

\begin{proof}
We note that since the negative end of a such a pair of pants $u$ has action $2\pi$, it is a simple orbit unless without loss of generality it is $p\overline{\gamma}_{0}$. Assume that $u$ factors through a branched covering $\psi$ and a simple curve $v$. By Riemann-Hurwitz formula $v$ is rational. Since $p$ is prime, the degree of $\psi$ is $p$ and the negative end of $v$ is $\overline{\gamma}_{0}$. It is clear that $v$ has at most two positive ends. Note that $v$ can not be a cylinder since the positive ends of $u$ are geometrically distinct. Hence $v$ is also a pair of pants. In particular, $\psi$ maps punctures of $u$ to punctures of $v$ respectively. Hence the ramification number of $\psi$ at each puncture of $u$ has to be $p$. In particular $p$ divides both $k^0$ and $k^\infty$. But this is not possible since $k^0+k^\infty=p+1$.
\end{proof}
By the lemma above and \ref{Jnregular}, we conclude that $\mathcal{M}^0(J_n)$ is a 0-dimensional manifold. 

Our aim is to show that $\mathcal{M}^0(J_n)$ is not empty. To this end, we choose a generic homotopy $(J_t)_{t\in[0,1]}$ of almost complex structures with the following properties:
\begin{enumerate}[label=\textrm{(Jt\arabic*)}]
    \item \label{Jtatends} $J_0=J_\alpha$ and $J_1=J_n$,
    \item \label{JtonVc}$J_t=J_\alpha$ on $V^c$ for all $t$,
    \item \label{Jtcompatible} $J_t$ is compatible with $d(e^s\alpha)$ for all $t$,
    \item \label{Jtregular} $(J_t)_{t\in[0,1]}$ is a regular homotopy of almost complex structures for any relevant moduli problem concerning simple curves together with the evaluation condition given by (\ref{cutout_Jn}) and for which $J_n$ and $J_\alpha$ are regular.   
\end{enumerate}
The existence of such a homotopy follows from the parametric version of the geometric transversality statement used for $J_n$, see Remark 7.4 in \cite{wendlSFT}. In what follows, we need the regular homotopy property of $(J_t)_{t\in[0,1]}$ for only finitely many moduli problems. Hence $\ref{Jtregular}$ is safely assumed. 

We recall that $J_0=J_\alpha$ is regular for the pair of pants we consider and Lemma \ref{popsaresimple} applies to $J_t$. Hence the moduli space 
    $$\bigcup_{t\in[0,1]}\mathcal{M}^0(J_t)$$ 
of  $J_t$- holomorphic pair of pants with asymptotics given by (\ref{pantsforproof}) and (\ref{pantsforproofmultiplicities}) together with the evaluation condition is a one-dimensional cobordims between $\mathcal{M}^0$ and $\mathcal{M}^0(J_n)$. We note that if the cobordism $\bigcup\mathcal{M}^0(J_t)$ is compact then $\mathcal{M}^0(J_n)$ is not empty since $\mathcal{M}^0$ consists of an odd number of points. 
\begin{center}
\emph{Compactness of the cobordism} 
\end{center}

Let $t_0\in [0,1]$ and $(u_{n})$ be a sequence of $J_{t_n}$- holomorphic pair of pants such that $t_n$ converges to $t_0$. Then by the SFT compactness theorem,
there is a subsequence, again denoted by $(u_n)$, converging to a holomorphic building $u_\infty$. A priori the holomorphic building $u_\infty$ is a collection of curves that lie in $\mathbb{R}\times L(p,q)$, which are either $J_\alpha$- holomorphic or $J_{t_0}$- holomorphic. These components fit together along their asymptotic ends and lead to the level structure of the building, see \cite{Bourgeois}, \cite{SFT-compact}.  

Since the complex structure on the domain of $u_n$'s is fixed, the components of the building $u_\infty$ emerge only out of bubbling-off. In particular, there is a finite set $P\subset \mathbb{C}P^1\setminus \Gamma$ such that on $\mathbb{C}P^1\setminus (\Gamma\cup P)$ the sequence $(u_n)$ has a uniform gradient bound. Hence there exist a component 
$$u_0:\mathbb{C}P^1\setminus (\Gamma\cup P)\rightarrow \mathbb{R}\times L(p,q)$$ 
of $u_\infty$ such that one of the followings hold:
\begin{itemize}
\item $u_0$ is $J_\alpha$- holomorphic and $u_n$ converges to $u_0$ in $C^\infty_{loc}$ on $\mathbb{C}P^1\setminus (\Gamma\cup P)$ after a sequence of shifts in $\mathbb{R}$-direction,
\item $u_0$ is $J_{t_0}$- holomorphic and $u_n$ converges to $u_0$ in $C^\infty_{loc}$ on $\mathbb{C}P^1\setminus (\Gamma\cup P)$.
\end{itemize}
We note that the signs of the punctures of  $u_0$ that are in $\Gamma$ do not have to match with the signs of the punctures of $u_n$'s but the homotopy classes of corresponding asymptotics do have to match. In particular, $u_0$ is non-constant.  The remaining structure of the building is given by so called a bubble tree. Instead of describing the a priori structure of the bubble tree, we immediately utilize a particular control over the components, which is due to the following fact. 
\begin{prop}\label{actiondecrease} Given any component of the limiting building, the total action of its positive ends is greater or equal to the total action of its negative ends. 
\end{prop}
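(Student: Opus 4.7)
The plan is to split into cases and reduce the harder one to a positivity statement for a closed holomorphic curve in a completion of the $\mathbb{Z}_p$-cover. For a component $v$ that is $J_\alpha$-holomorphic in a symplectization level, the $\mathbb{R}$-invariant $J_\alpha$ preserves $\xi=\ker \alpha$, so $v^* d\alpha$ is a non-negative $2$-form. Stokes' theorem applied on a truncated exhaustion, combined with the asymptotic convergence of $v$ to trivial cylinders, immediately yields $0 \leq \int v^* d\alpha = \sum_{z \in \Gamma^+} A(\gamma_z) - \sum_{z \in \Gamma^-} A(\gamma_z)$ as desired.

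For a component that is $J$-holomorphic with $J$ not $\mathbb{R}$-invariant (either $J_{t_0}$ on $\mathbb{R}\times L(p,q)$ here, or $J_n$ on the cobordism $W^n$ later in the paper), the plan is to lift $v$ to an equivariant $\tilde J$-holomorphic curve $\tilde v: \tilde \Sigma \setminus \tilde \Gamma \to \tilde W$ in the $\mathbb{Z}_p$-cover, which is a union of symplectizations of $S^3$ glued via the lift of $\Phi$. Using $\mathbb{R} \times S^3 \cong L^*$, the asymptotics of $\tilde v$ match those of a meromorphic section of a line bundle over a base curve, so its punctures can be filled in by adjoining zero and infinity sections of the appropriate projectivization at each end; after this collapse, the cover compactifies to a closed symplectic $4$-manifold $X$ with a tame almost complex structure $\hat J$, and $\tilde v$ extends to a closed $\hat J$-holomorphic curve $\hat v: \tilde \Sigma \to X$. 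Then the symplectic area $\int \hat v^* \hat \omega$ is non-negative (since $\hat \omega$ tames $\hat J$) and decomposes as the total positive action minus the total negative action of $\tilde v$'s asymptotic ends, plus intersection contributions with the added divisors, the latter being non-negative by positivity of intersection in dimension four. This produces the action inequality for $\tilde v$, and by $p$-fold equivariance it descends to $v$.

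The hard part will be constructing the compactification $X$ together with a tame $\hat J$ that extends smoothly across the added divisors—particularly across the middle piece of $W^n$, which is modeled on a rescaled symplectization of $L(p,q')$—and justifying the area decomposition with the correct matching between asymptotic actions on one side and pole/zero orders of the extending meromorphic sections on the other. Ensuring the lifted almost complex structure interacts correctly with the compactifying $\mathbb{C}P^1$ divisors at the interface between the top/bottom symplectization ends and the middle cobordism piece will be the most delicate point.
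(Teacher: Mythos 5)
Your framing — split into the $\mathbb{R}$-invariant case (Stokes) and the case of a non-invariant $J$, then lift to the $\mathbb{Z}_p$-cover, use $\mathbb{R}\times S^3\cong L^*$, and compactify to a closed $4$-manifold so the lift extends to a closed holomorphic curve — matches the paper's setup exactly. But the step where you actually derive the inequality does not work, and this is precisely the point the proposition exists to handle.

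Your claim is that the symplectic area $\int \hat v^*\hat\omega$ decomposes as ``positive action minus negative action plus non-negative intersection contributions''. Two problems. First, even granting some such decomposition, the stated sign gives $\mathcal{A}^+-\mathcal{A}^-=\int\hat v^*\hat\omega - (\text{non-negative})$, i.e.\ an \emph{upper} bound on $\mathcal{A}^+-\mathcal{A}^-$, which is the wrong direction. Second, and more fundamentally, no such decomposition is available: in the compactification $\widehat{L}$ one has $[\hat v]=-K^-[S_0]+K^+[S_\infty]$, so for any closed form $\hat\omega$ one gets $\int\hat v^*\hat\omega=-K^-A_0+K^+A_\infty$ where $A_0=\langle\hat\omega,[S_0]\rangle$ and $A_\infty=\langle\hat\omega,[S_\infty]\rangle$, with $A_\infty>A_0>0$ forced by positivity of fiber area. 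Non-negativity of this number does \emph{not} force $K^+\ge K^-$ (e.g.\ $K^+=1$, $K^-=2$, $A_\infty=3$, $A_0=1$). This is exactly the obstruction the paper is flagging when it says the usual action-decrease fails when the contact forms are not rescaled: the cobordism is not exact with respect to the natural ends, so there is no Stokes/area identity tying $\int v^*\omega$ to $\mathcal{A}^+-\mathcal{A}^-$, and taming alone cannot recover it. The paper's actual mechanism is the adjunction formula for the closed curve $\hat u$ in $\widehat L$: from $[\hat u]\cdot[\hat u]=2\delta(\hat u)+c_1([\hat u])-\chi(\Sigma)$ with $\delta(\hat u)\ge 0$ and $g\ge 0$ one gets $(K^+-K^-)(K^++K^--1)\ge 0$, and together with $K^+\ge 1$ (a taming argument — the only place taming is used) this yields $K^+\ge K^-$; the non-simple case then factors through the underlying simple curve. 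Your write-up never invokes adjunction, so the key inequality is not proved. Relatedly, you identify ``constructing the compactification with a tame $\hat J$'' as the hard part, but that part is routine in the paper; the genuine difficulty is replacing the missing Stokes argument, and adjunction is the tool that does it.
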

We note that for the components of $u_\infty$ that lie in upper or lower translation invariant levels the above statement is trivial. The non-trivial part of the statement is about $J_{t_0}$- holomorphic components, namely the ones in the middle level and the proof is given in the next section. 

The first implication of Proposition \ref{actiondecrease} is the absence of holomorphic planes. We note that a holomorphic plane requires a positive end of action at least $2\pi$. Together with the action of the very bottom end of the building, a finite energy plane forces the total action of the very top end of the building to be at least $4\pi$. But we know that the total action at the top is $2\pi(1+1/p)$.

Now a priori there may be components of the  building that are bubbled off at points in $P$. But any collection of such components associated to a given bubble point in $P$ must contain a finite energy plane. Since such planes are ruled out we conclude that $P=\emptyset$. Next we consider the components that are bubbled off at the punctures in $\Gamma$ and note that any such component is cylindrical since the domain of the building has arithmetic genus zero and there are no holomorphic planes. Hence we have the essential component with the puncture set $\Gamma$ (possibly with different signs compared to $u_n$'s) and the remaining components are cylindrical (possibly with two positive ends), which can be grouped into collections associated to the punctures in $\Gamma$. 

Concerning the signs of the punctures of components, we first note that the puncture 1 is has to stay negative since turning it into a positive puncture requires a cylindrical component associated to the puncture 1, which has two negative punctures. Consequently only one of the punctures among $0$ and $\infty$ may change sign. But it is easy to see that by Proposition \ref{actiondecrease} such a configuration is not possible. Hence the building consists of the essential component $u_0$, which is an honest pair of pants and we have honest cylinders. 

Concerning the level structure we note the following. Since there is no bubbling off at the marked point 2, we have $C^\infty_{loc}$-convergence of $u_n$ to $u_0$. Combining this with the fact that $u_n$'s satisfy the evaluation condition (\ref{evlens}) we conclude that the $u_0$ lies in the middle layer. 

Now we need to show that there are no non-trivial cylinders in upper and lower levels. To this end we let $\mathcal{A}^\pm$ denote the total action of the positive/negative ends in the middle layer. We note that minimal action of a closed Reeb orbit is $2\pi/p$ if the orbit is non-contractible and $2\pi$ if the orbit is contractible. By Proposition \ref{actiondecrease}, we have following possibilities regarding $\mathcal{A}^\pm$:
\begin{enumerate}
\item $\mathcal{A}^+=2\pi(1+1/p)$ and $\mathcal{A}^-=2\pi$,
\item $\mathcal{A}^+=2\pi(1+1/p)$ and $\mathcal{A}^-=2\pi(1+1/p)$,
\item $\mathcal{A}^+=2\pi$ and $\mathcal{A}^-=2\pi$. 
\end{enumerate} 
We first assume that $\mathcal{A}^+=2\pi(1+1/p)$ and $\mathcal{A}^-=2\pi$. In this case any cylinder in an upper or lower level has trivial $d\alpha$-energy. Hence any such cylinder is trivial. Now it remains to rule out last two cases.
\begin{lem} The case of $\mathcal{A}^+=2\pi(1+1/p)$ and $\mathcal{A}^-=2\pi(1+1/p)$ is not possible.
\end{lem}
\begin{proof} In this case there is no non-trivial cylinder in the upper levels and there has to be a non-trivial cylinder in a lower level. But such a cylinder has to have a contractible positive end with action $2\pi(1+1/p)$ and this is not possible. 
\end{proof}
\begin{lem} The case of $\mathcal{A}^+=2\pi$ and $\mathcal{A}^-=2\pi$ is not possible.
\end{lem}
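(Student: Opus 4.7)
The plan is to use $\mathcal{A}^+(u_0)=\mathcal{A}^-(u_0)=2\pi$ as the borderline case of Proposition~\ref{actiondecrease}, extract that the image of $u_0$ collapses onto a single trivial cylinder $\mathbb{R}\times\overline{\gamma}$, and then let the point constraint at $x_0$ produce the contradiction.

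First, I would invoke the equality case of Proposition~\ref{actiondecrease}. The proof of that proposition, announced for the next section, will proceed by extending the lift $\tilde u_0\subset\mathbb{R}\times S^3$ to a closed smooth curve in a suitable compactification and computing a non-negative intersection number against a fiber class that equals (up to a positive multiple) the action defect $\mathcal{A}^+(u_0)-\mathcal{A}^-(u_0)$. In the present case this intersection vanishes, so by positivity of intersections of pseudo-holomorphic curves the lift $\tilde u_0$ must be contained in a single fiber of the holomorphic base projection $\pi:\mathbb{R}\times S^3\to\mathbb{C}P^1$. Descending by the $\mathbb{Z}_p$-action, the image of $u_0$ in $\mathbb{R}\times L(p,q)$ lies in $\mathbb{R}\times\overline{\gamma}$ for a single Reeb orbit $\overline{\gamma}$ of $L(p,q)$.

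Next, I would identify $\overline{\gamma}$. The upper $J_\alpha$-holomorphic cylinders connect the positive ends of $u_0$ to the top asymptotics $k^0\overline{\gamma}_0$ and $k^\infty\overline{\gamma}_{\infty}$, and each such cylinder preserves the free homotopy class of its asymptotic orbit. Hence the two positive ends of $u_0$ carry non-trivial classes $k^0[\overline{\gamma}_0]$ and $k^\infty q[\overline{\gamma}_0]$ in $\pi_1(L(p,q))\cong\mathbb{Z}_p$. Since the only non-contractible Reeb orbits in $L(p,q)$ are iterates of $\overline{\gamma}_0$ or $\overline{\gamma}_{\infty}$, and these lie geometrically on $\overline{\gamma}_0\cup\overline{\gamma}_{\infty}$, the containment of the image of $u_0$ in $\mathbb{R}\times\overline{\gamma}$ forces $\overline{\gamma}\in\{\overline{\gamma}_0,\overline{\gamma}_{\infty}\}$.

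Finally, the $C^\infty_{\mathrm{loc}}$-convergence at $z=2$ (which is not a bubbling point by the earlier compactness analysis) together with $u_n(2)=(\log f(x_0),x_0)$ gives $u_0(2)=(\log f(x_0),x_0)$; in particular $x_0\in\overline{\gamma}$. This contradicts the choice of $x_0$, which was made to lie away from both $\overline{\gamma}_0$ and $\overline{\gamma}_{\infty}$. The main hurdle is the first step, namely extracting the equality statement from the intersection-theoretic proof of Proposition~\ref{actiondecrease}: one must verify that vanishing of the relevant non-negative intersection count forces the extended lift to be (a branched cover of) a fiber; once this rigidity is in place the remaining combinatorial and topological steps are immediate.
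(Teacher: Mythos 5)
Your approach would work for the \emph{other} lemma in the paper with the identical statement (Lemma \ref{needfancyx0} in the neck-stretching step, where the middle level is the honest symplectization $\mathbb{R}\times L(p,q')$ with the translation-invariant $J_{\alpha'}$; there the equality $\mathcal{A}^+=\mathcal{A}^-$ means vanishing $d\alpha'$-energy, which instantly forces $u_0$ to be a cover of a trivial cylinder). But for the present lemma, which sits in the compactness-of-the-cobordism step, $u_0$ is $J_{t_0}$-holomorphic where $J_{t_0}$ is the non-cylindrical almost complex structure interpolating between $J_\alpha$ and the conjugated $J_{\alpha'}$ on the region $V^+\sqcup V^-$. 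In particular, the base projection $\pi:\mathbb{R}\times S^3\to\mathbb{C}P^1$ is not $\tilde J_{t_0}$-holomorphic, so ``positivity of intersections against the fiber class'' is not available, and the equality case of Proposition \ref{actiondecrease} does not make $u_0$ lie over a single Reeb orbit. Concretely, the intersection-theoretic argument in Section 4.2.3 is an adjunction computation for the compactified lift $\hat u$ in $(\widehat{L},\hat J_{t_0})$ using the holomorphic sections $S_0,S_\infty$; in the equality case $K^+=K^-$ it only yields that the underlying simple closed curve is embedded, genus zero and lies in the class $[S_\infty]-[S_0]$. Since $\hat J_{t_0}\neq\hat J_0$ away from neighbourhoods of $S_0$ and $S_\infty$, such a curve need not be a fibre of $\widehat L\to\mathbb{C}P^1$, and the claimed rigidity ``image of $u_0$ is contained in $\mathbb{R}\times\overline{\gamma}$'' fails. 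This is precisely the gap you flag at the end, but it is not a formality to be filled: it is where the argument breaks for this version of the lemma.

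The paper's actual proof is of an entirely different nature. It first observes that the action gap of $2\pi/p$ between the top asymptotics and $\mathcal{A}^+$ forces exactly one non-trivial $J_\alpha$-holomorphic cylinder in the upper levels, of index (at least, in fact exactly) $2$; hence $u_0$ has Fredholm index at most $2$ while being pinned by a $4$-dimensional point constraint via the evaluation at $2$. If $u_0$ is simple, the regularity of the homotopy $(J_t)$ together with the constraint gives a moduli space of negative expected dimension, a contradiction. If $u_0$ is multiply covered, the underlying simple curve $v$ is forced (using that $p$ is prime and the multiplicity bookkeeping of the negative end) to have a covering of degree $p$, and then a short case analysis on whether $v$ has one or two positive ends again reaches an index/constraint contradiction or a divisibility contradiction with $k_0+k_\infty-1=p$. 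If you want to keep your strategy, you would at minimum need to replace the geometric ``fibre'' rigidity by this index-theoretic argument for the $J_{t_0}$-holomorphic middle component.
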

\begin{proof} We note that there is no non-trivial component in the lower level and there is only one non-trivial cylinder in the upper level since the minimal period of Reeb orbits is $2\pi/p$ and this is precisely the action difference between $\mathcal{A}^+$ and the total action at the top of the building. The index of the non-trivial cylinder is at least 2 (in fact it is precisely 2 in this case) and therefore the index of $u_0$ is at most 2. Now if $u_0$ is simple then we get a contradiction since $(J_t)_{t\in[0,1]}$ is a regular homotopy and $u_0$ satisfies a $4$-dimensional constraint. 
\begin{figure}[h]
\includegraphics[scale=0.15]{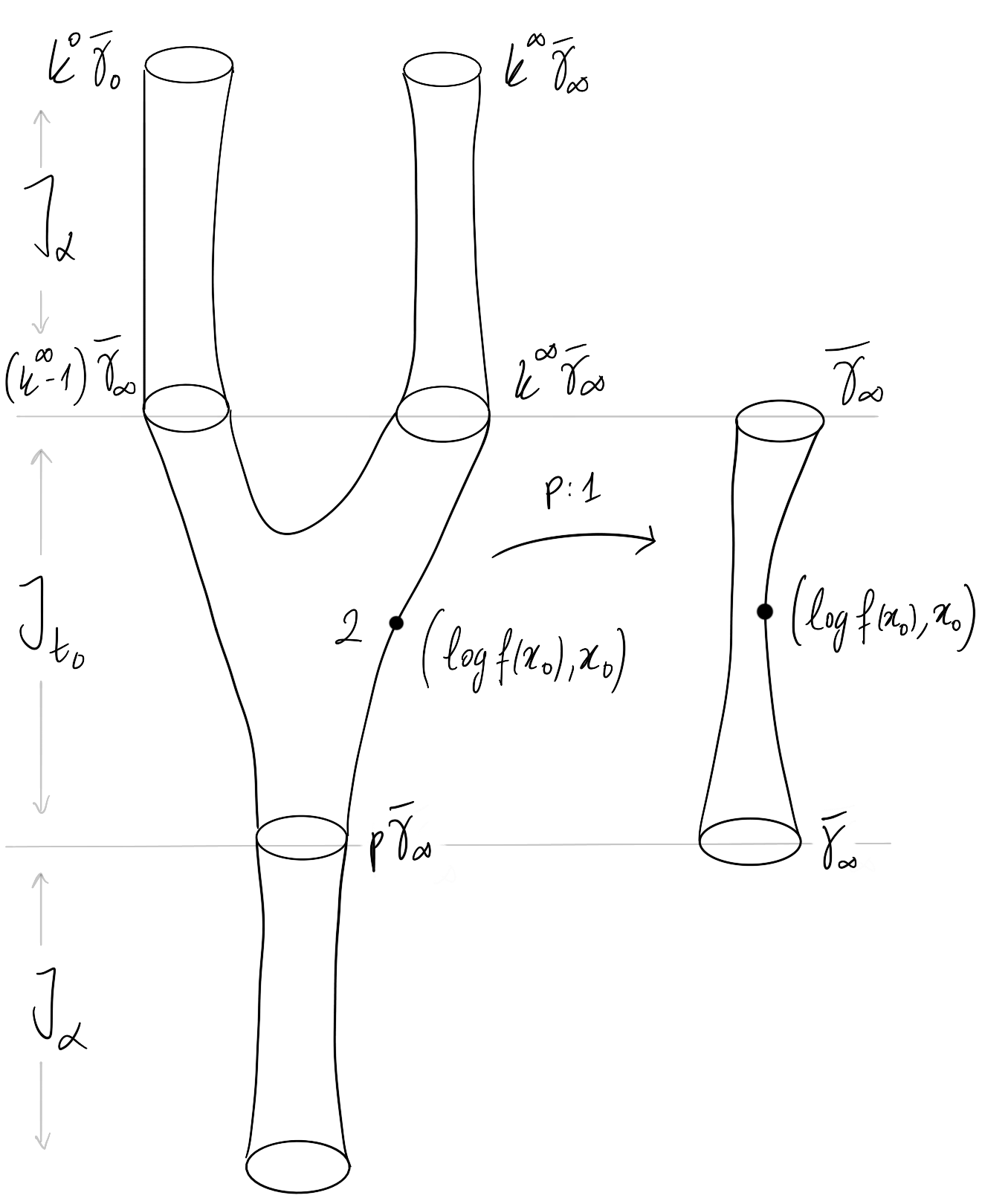}
\centering
\caption{The case $\mathcal{A}^+=2\pi$ and $\mathcal{A}^-=2\pi$.}
\label{2pi2picobordism}
\end{figure}

Now we assume that $u_0$ is multiply covered. Without loss of generality we further assume that the positive end of the non-trivial cylinder is  $k_0\overline{\gamma}_0$. Then negative end of the non-trivial cylinder is $(k_0-1) \overline{\gamma}_\infty$ and the positive ends of  $u_0$ are given by $(k_0-1) \overline{\gamma}_\infty$ and $k_\infty \overline{\gamma}_\infty$. Note that the negative end of $u_0$ can not be simple. Hence it is either $p\overline{\gamma}_0$ or $p\overline{\gamma}_\infty$. Now let $$v:\Sigma\setminus \Gamma\rightarrow \mathbb{R}\times L(p,q)$$ be the underlying simple curve and $\psi:\mathbb{C}P^1\rightarrow \Sigma$ be the branched covering so that $u_0=v\circ \psi$ and $\psi^{-1}(\Gamma)=\{0,1,\infty\}$. Let $N>1$ be the degree of $\psi$. By Riemann-Hurwitz formula, $\Sigma$ is a sphere. 
Since $u_0$ has only one negative end so does $v$ and this negative end has multiplicity $p/N$. Since $p$ is prime, we get $N=p$. It is clear that $v$ has at most two positive ends. 

We first assume that $v$ has one positive end, namely $l\overline{\gamma}_{\infty}$. Let $r_{0/\infty}$ be the ramification number of $\psi$ at the point $0/\infty$. Then we get $k_0-1=r_0l$ and $k_\infty=r_\infty l$. Since $k_0-1+k_\infty=r_0+r_\infty=p$, we get $l=1$. By Proposition \ref{actiondecrease} and the fact that $q\not \equiv \pm 1$, the negative end of $v$ is also $\overline{\gamma}_\infty$. Hence the Fredholm index of $v$ is 0 as an unparametrized curve. Fixing a parametrization and viewing $v$ as a parametrized curve it has Fredholm index 2 but it satisfies a 4-dimensional constraint induced by $u_0$ and $\psi$. Since $(J_t)_{t\in[0,1]}$ is a regular homotopy and $v$ is simple, this is not possible.

Now we assume that $v$ has two positive ends. In this case the ramification number of the points $0$ and $\infty$ are both $p$. Hence $p$ divides both $k_0-1$ and $k_\infty$. But this is not possible since $k_0+k_\infty-1=p$.
\end{proof}
Hence we conclude that the only non-trivial component of the limiting building $u_\infty$ is a pair of pants $u_0$ having the asymptotics of $(u_n)$ and satisfying the evaluation condition. This finishes the proof of the compactness of the cobordism.
\begin{center}
Stretching the neck
\end{center}
Knowing that the cobordism $\cup_{t\in [0,1]}\mathcal{M}^0(J_t)$ is compact, we get $J_n$- holomorphic pair of pants $u_n$ for each $n$. 
Now we look at the limit of the sequence $(u_n)$ as $n\rightarrow\infty$. We know that a subsequence of $(u_n)$, again denoted by $(u_n)$, converges to  a holomorphic building $u_\infty$. A priori the limiting building has $J_\alpha$- holomorphic components in upper and lower levels, $J_{\alpha'}$- holomorphic components in middle levels and finally some components in the upper and the lower connecting
levels. The connecting levels has the following description. 
Note that the hypersurface $\Sigma=\Phi(\{0\}\times L(p,q'))$ divides $\mathbb{R}\times L(p,q)$ into two components $N^+$ and $N^-$ each admitting $\Sigma$ as its boundary. The upper connecting level can be seen as the manifold
\begin{equation}\label{W^+}
W^+:=N^+\cup (-\infty,\varepsilon)\times L(p,q')
\end{equation}
where the neighbourhood $\Phi([0,\varepsilon)\times L(p,q'))$ of $\Sigma\subset N^+$ is identified with $[0,\varepsilon)\times L(p,q')$ via the symplectomorphism $\Phi$ that is given by (\ref{exactsymplecto}). $W^+$ is endowed with an almost complex structure $J^+$ such that $J^+=J_\alpha$ on $N^+\setminus V^+$ and $J^+=J_{\alpha'}$ on $(-\infty,0]\times L(p,q')$. 
Moreover $J^+$ is regular for any simple curve that passes through the open set $V^+$ together with an a priori constraint if applies. This follows from the fact that $J^+$ coincides with some $J_n$ on $V^+$ and the assumption \ref{Jnupperlowerregular}.  Similarly the lower connecting level is given by 
\begin{equation}\label{W^-}
W^-:=N^-\cup (-\varepsilon,+\infty)\times L(p,q')
\end{equation}
together with the almost complex structure $J^-$ such that $J^-=J_\alpha$ on $N^-\setminus V^-$ and $J^-=J_{\alpha'}$ on $[0,+\infty)\times L(p,q')$ and and by \ref{Jnupperlowerregular} it is regular for any simple curve passing through $V^-$ and satisfying an a priori constraint. 

As before, since the domains of $u_n$'s are fixed, we have a bubble tree structure on the limiting building. We have an essential component $u_0$ with domain $\mathbb{P}\setminus (\Gamma\cup P)$, which in this case can be $J_{\alpha}$- holomorphic or $J_{\alpha'}$- holomorphic or $J^\pm$- holomorphic. In order to rule out unpleasant components of the building, we want to argue in terms of the actions of the asymptotic ends as before. Similar to Proposition \ref{actiondecrease}, we have an a priori control over the actions of asymptotic ends of $J^{\pm}$- holomorphic components. 
\begin{prop}\label{actiondecreasefinal} For any component in $(W^\pm,J^\pm)$ the total action at its positive ends is greater or equal to the total action at its negative ends.  
\end{prop}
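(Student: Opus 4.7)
The approach will be an essentially Stokes-theorem argument, completely parallel to the one behind Proposition \ref{actiondecrease}. The key structural observation is that since $\Phi$ in (\ref{exactsymplecto}) is an exact symplectomorphism, we can produce a globally defined Liouville primitive on $W^\pm$ interpolating between the two symplectization $1$-forms at the cylindrical ends. Given compatibility of $J^\pm$ with the resulting symplectic form, the curve integral of the symplectic form is pointwise non-negative, and Stokes converts this into the desired inequality between total actions.

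First I construct the primitive on $W^+$. Define
\begin{align*}
\lambda^+:=\left\{\begin{array}{ll} e^s\alpha & \textrm{on } N^+,\\ e^t\alpha' & \textrm{on } (-\infty,\varepsilon)\times L(p,q').\end{array}\right.
\end{align*}
A direct computation from (\ref{exactsymplecto}) together with $\varphi^*\alpha'=f\alpha$ gives $\Phi^*(e^s\alpha)=e^t\alpha'$, so the two prescriptions agree on the overlap $\Phi([0,\varepsilon)\times L(p,q'))$. Thus $\lambda^+$ is a smooth global $1$-form and $\omega^+:=d\lambda^+$ a symplectic form on $W^+$. Compatibility of $J^+$ with $\omega^+$ is then immediate from the definitions: on $N^+\setminus V^+$ since $J_\alpha$ is compatible with $d(e^s\alpha)$, on $(-\infty,0]\times L(p,q')$ since $J_{\alpha'}$ is compatible with $d(e^t\alpha')$, and on $V^+$ where $J^+$ agrees with some $J_n$ that is compatible with $d(e^s\alpha)=\omega^+$ by \ref{Jncompatible}. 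Hence $u^*\omega^+\geq 0$ pointwise for every $J^+$-holomorphic $u$.

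Next I apply Stokes' theorem. Given a $J^+$-holomorphic curve $u:\Sigma\setminus\Gamma\to W^+$ with positive ends of periods $T_i^+$ with respect to $\alpha$ and negative ends of periods $T_j^-$ with respect to $\alpha'$, the Morse-Bott asymptotic convergence yields $\int_{C_i^+}u^*\lambda^+\to T_i^+$ and $\int_{C_j^-}u^*\lambda^+\to T_j^-$ as circles $C_i^\pm$ shrink toward the punctures. Applying Stokes on a compact exhaustion of $\Sigma\setminus\Gamma$ by subsurfaces whose boundaries are such circles and taking the limit,
\begin{align*}
0\leq \int_{\Sigma\setminus\Gamma}u^*\omega^+=\sum_i T_i^+ -\sum_j T_j^-=\mathcal{A}^+-\mathcal{A}^-,
\end{align*}
which is the desired inequality. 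The argument for $(W^-,J^-)$ is identical, with the roles of $L(p,q)$ and $L(p,q')$ exchanged: one takes $\lambda^-=e^s\alpha$ on $N^-$ and $e^t\alpha'$ on $(-\varepsilon,+\infty)\times L(p,q')$, these matching on the overlap by exactness of $\Phi$.

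The only delicate point I anticipate is the smoothness of $J^+$ across the contact-type hypersurface $\Sigma$, where the prescriptions $J_\alpha$ and the pushforward of $J_{\alpha'}$ need not coincide a priori. Any failure of smoothness there, however, can be repaired by interpolating through $\omega^+$-compatible almost complex structures on an arbitrarily thin collar of $\Sigma$; this modification can be absorbed into the perturbation domain $V^+$ without affecting compatibility with $\omega^+$, and hence has no effect on the Stokes calculation above.
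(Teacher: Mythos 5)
Your plan rests on a Stokes argument, but this is exactly the mechanism the paper explicitly rules out for these particular cobordisms, and the obstruction is real; the paper's proof uses a different, intersection-theoretic method.

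The immediate problem is that $\lambda^+$, being $e^s\alpha$ near the positive cylindrical end, is unbounded: along a circle $C^+_i$ escaping to $+\infty$ one has $\int_{C^+_i}u^*\lambda^+\sim T^+_i\,e^{T^+_i s}\to\infty$, so $\int u^*\omega^+$ diverges and certainly does not equal $\mathcal A^+-\mathcal A^-$. To extract a finite number from Stokes one must replace $\lambda^+$ near the ends by a bounded primitive $\lambda_\psi$, say $\psi^+(s)\alpha$ on the positive end and $\psi^-(t)\alpha'$ on the negative end, with $\psi^\pm$ non-decreasing so that the cylindrical $J_\alpha$ (resp.\ $J_{\alpha'}$) still tames $d\lambda_\psi$. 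Stokes then gives
$$0\;\leq\;\int u^*d\lambda_\psi\;=\;A\,\mathcal A^+-B\,\mathcal A^-,\qquad A:=\lim_{s\to+\infty}\psi^+(s),\quad B:=\lim_{t\to-\infty}\psi^-(t).$$
But $\lambda_\psi$ must coincide with $\lambda^+=e^s\alpha$ on $V^+$, since there $J^+$ is only known to be compatible with $d(e^s\alpha)$ and not with $d(\psi\alpha)$ for arbitrary $\psi$; and $V^+$ sits above the hypersurface $\Sigma=\Phi(\{0\}\times L(p,q'))$, whose $s$-coordinate reaches $\log\max f$. Hence the truncation of $\psi^+$ can begin only at some $s_0>\log\max f$, forcing $A\geq e^{s_0}>\max f$, while monotonicity of $\psi^-$ with $\psi^-(0)=1$ forces $B\leq 1$. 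Since the contact volumes of $(L(p,q),\alpha)$ and $(L(p,q'),\alpha')$ agree, $\int(f^2-1)\,\alpha\wedge d\alpha=0$ and so $\max f\geq 1$, with equality only if $f\equiv 1$. You therefore obtain only $\mathcal A^+\geq\tfrac{B}{A}\,\mathcal A^-$ with $\tfrac{B}{A}<1$ whenever $\varphi$ is not a strict contactomorphism, which is strictly weaker than the proposition and is useless in the later argument, where the sharp inequality is combined with the exact identity $l^0+l^\infty=k^0+k^\infty$ to conclude $l^0=k^0$.

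This is precisely the remark the paper makes before Theorem \ref{teo2}: action control of this type ``holds in general for symplectizations and for cobordisms where the contact forms at ends are suitably rescaled. But here we achieve this property for the cobordisms for which contact forms are not rescaled.'' The paper's proof of Proposition \ref{actiondecreasefinal} accordingly avoids Stokes altogether: one lifts the curve to the $p$-fold cover $\widetilde W^\pm$, compactifies both cylindrical ends to $\hat J^\pm$-holomorphic spheres $S_0,S_\infty$ using the Reeb flows of $\alpha_0$ and $\beta_0$, extends the lifted curve to a closed curve $\hat u$ with $[\hat u]\cdot[S_0]=K^-$ and $[\hat u]\cdot[S_\infty]=K^+$, and invokes the adjunction inequality together with $[S_0]^2=-1$, $[S_\infty]^2=1$ to deduce $K^+\geq K^-$. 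It is this four-dimensional input that buys the sharp constant. Your closing worry about smoothness of $J^+$ across $\Sigma$ is a genuine but comparatively minor technicality; the failure of the Stokes mechanism is the fundamental one.
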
 
We postpone the proof of this statement to the next section and continue with the proof of the main statement. We note that replacing  Proposition \ref{actiondecrease} with Proposition \ref{actiondecreasefinal}, the previous arguments  apply to $u_\infty$ word by word since in $L(p,q')$, the minimal action of the Reeb orbits  $2\pi/p$ and the action of a contractible orbit is $2\pi$. Hence we have a pair of pants $u_0$ and bunch of cylinders in the building. Moreover, $u_n$ converges to $u_0$ in $C^\infty_{loc}$   near the marked point 2 and in the light of the equation (\ref{evaluation_makes_sense_for_Jn}), $u_0$ lies in a middle layer since $u_n$'s satisfies (\ref{cutout_Jn}). Note that since the contactomorphism (\ref{contacto}) induces an isomorphism on the fundamental group, $u_0$ has two positive non-contractible ends and one negative contractible end. 

Due to the action window, $u_0$ is the only non-trivial component in the middle layer. We let $\mathcal{A}^+$ to be the total action of the positive ends of $u_0$ and $\mathcal{A}^-$ be the total action of the negative end of $u_0$. By Proposition \ref{actiondecreasefinal}, we have following possibilities regarding $\mathcal{A}^\pm$:
\begin{enumerate}
\item $\mathcal{A}^+=2\pi(1+1/p)$ and $\mathcal{A}^-=2\pi$,
\item $\mathcal{A}^+=2\pi(1+1/p)$ and $\mathcal{A}^-=2\pi(1+1/p)$,
\item $\mathcal{A}^+=2\pi$ and $\mathcal{A}^-=2\pi$. 
\end{enumerate} 
We first discuss the unpleasant cases. 
\begin{lem} The case of $\mathcal{A}^+=2\pi(1+1/p)$ and $\mathcal{A}^-=2\pi(1+1/p)$ is not possible.
\end{lem}
\begin{proof}
Note that the negative end of $u_0$ has action $2\pi(1+1/p)$ but such an orbit can not be contractible. 
\end{proof}
\begin{lem}\label{needfancyx0} The case of $\mathcal{A}^+=2\pi$ and $\mathcal{A}^-=2\pi$ is not possible.
\end{lem}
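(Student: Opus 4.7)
The plan mirrors the treatment of the analogous case $(\mathcal{A}^+,\mathcal{A}^-)=(2\pi,2\pi)$ in the $J_t$-cobordism, translated to the stretched building. First I would pin down the shape of the building: the match $\mathcal{A}^-=2\pi$ between the negative asymptotic action of $u_0$ and the bottom of the building forces every component below the middle layer to preserve action and hence to be a trivial cylinder, while the action gap $2\pi(1+1/p)-\mathcal{A}^+=2\pi/p$ equals the minimal Reeb action in both $L(p,q)$ and $L(p,q')$ and therefore must be absorbed by a single non-trivial cylinder sitting in either an upper symplectization level or in $(W^+,J^+)$, with all other upper components trivial. This non-trivial cylinder has Fredholm index at least $2$, and since the total index of the building equals the index $4$ of the constrained pair of pants, ${\rm ind}(u_0)\leq 2$.

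Next I would rule out the possibility that $u_0$ is simple. In the middle layer $(\mathbb{R}\times L(p,q'),J_{\alpha'})$, the almost complex structure $J_{\alpha'}$ is regular for every admissible moduli problem by Remark \ref{dimension}, and the generic placement of $x_0$ (Remark \ref{choiceofx0}) guarantees that the evaluation constraint forced by (\ref{evaluation_makes_sense_for_Jn})---that $u_0$ meets the line $\mathbb{R}\times\{\varphi(x_0)\}\subset\mathbb{R}\times L(p,q')$---is transverse. The expected dimension of the constrained moduli space of simple $J_{\alpha'}$-holomorphic pairs of pants, modulo the $\mathbb{R}$-translation of the middle layer, would then be ${\rm ind}(u_0)-4\leq -2$, so the space is empty, contradicting the existence of $u_0$.

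Finally I would rule out the multiply covered case. Write $u_0=v\circ\psi$ with $v$ simple and $\psi:\mathbb{C}P^1\to\Sigma$ a branched covering of degree $N>1$ that preserves the signs of punctures. Riemann--Hurwitz forces $\Sigma=\mathbb{C}P^1$, and Proposition \ref{actiondecreasefinal} continues to rule out finite-energy planes, so $v$ is either a pair of pants or a cylinder. If $v$ is a pair of pants, each of its three punctures has a unique preimage among $\{0,1,\infty\}$ with ramification $N$; Riemann--Hurwitz then yields $2N-3(N-1)=2$, i.e.\ $N=1$. If $v$ is a cylinder, the unique negative puncture $1$ of $u_0$ must be the sole preimage of the negative puncture of $v$ with ramification $N$, and matching the asymptotic multiplicities forces $N\cdot(\text{multiplicity of } v\text{'s negative end})=1$, again giving $N=1$. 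Both sub-cases contradict $N>1$, proving the lemma. The main subtlety I anticipate is justifying that $J_{\alpha'}$ together with the chosen point $\varphi(x_0)$ is regular for the constrained simple moduli problem in the middle layer, and this is precisely where the conditions on $x_0$ from Remark \ref{choiceofx0} are needed.
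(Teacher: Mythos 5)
There is a genuine gap in the multiply covered sub-case, and it is precisely the case that actually occurs. When $\mathcal{A}^+ = \mathcal{A}^- = 2\pi$, the curve $u_0$ has \emph{zero} $d\alpha'$-energy (by Stokes), which already forces its image to lie in $\mathbb{R}\times\gamma'$ for a single Reeb orbit $\gamma'$ of $\alpha'$; you never use this, and it is the crux of the matter. Since the two positive ends of $u_0$ are non-contractible, $\gamma'$ must be $\overline{\gamma}_0'$ or $\overline{\gamma}_\infty'$, and hence the contractible negative end of $u_0$ is $p\overline{\gamma}_0'$ or $p\overline{\gamma}_\infty'$ — an orbit of multiplicity $p$, not $1$. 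Your cylinder sub-case then silently assumes the negative end of $u_0$ has multiplicity $1$ when you write ``$N\cdot(\text{multiplicity of } v\text{'s negative end})=1$''; with the actual multiplicity $p$, the equation becomes $N\cdot(\text{mult of }v\text{'s neg.\ end})=p$, which is solved by $N=p$ with $v$ the trivial cylinder over $\overline{\gamma}_0'$ (or $\overline{\gamma}_\infty'$), and Riemann–Hurwitz does not obstruct this: one gets $3N-3$ worth of branching over the three punctures when $v$ is a pair of pants (ruling that out, as you say), but when $v$ is a cylinder with two preimages of its positive puncture, the branching budget works out with one extra interior branch point. So your argument does \emph{not} exclude $u_0 = v\circ\psi$ with $v$ a trivial cylinder and $\deg\psi = p$, and indeed this configuration is forced by the zero-energy observation.

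The paper disposes of this configuration by a much more elementary route: since ${\rm im}(u_0) = \mathbb{R}\times\overline{\gamma}_0'$ (or $\mathbb{R}\times\overline{\gamma}_\infty'$), and since the evaluation constraint together with (\ref{evaluation_makes_sense_for_Jn}) forces $u_0$ to pass through the point $(0,\varphi(x_0))\in\mathbb{R}\times L(p,q')$, one gets a contradiction directly from the requirement in Remark \ref{choiceofx0} that $\varphi(x_0)$ lies away from the non-contractible orbits. This is the intended role of Remark \ref{choiceofx0}; you invoke it instead to argue transversality of the evaluation constraint for simple $J_{\alpha'}$-curves, which is not what it provides. (As a secondary issue: your step ruling out a \emph{simple} $u_0$ leans on the claim in Remark \ref{dimension} that $J_{\alpha'}$ is regular for every admissible problem, which the paper states only as a claim and does not establish in the generality you would need; the paper's zero-energy argument sidesteps the need for any such transversality.)
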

\begin{proof}
In this case $u_0$ has trivial $d\alpha$-energy and this is possible only if $u_0$ is a cover of a trivial cylinder. In this case, the negative end of $u_0$ is either $p\overline{\gamma}_0'$ or $p\overline{\gamma}_\infty'$. Without loss of generality we assume that it is $p\overline{\gamma}_0'$. Then $u_0$ is the $p$-fold cover of the trivial cylinder over $p\overline{\gamma}_0'$. But this is not possible since the image of $u_0$ misses the point $(0,\varphi(x_0))$ in $\mathbb{R}\times L(p,q')$ due to the choice of the point $x_0$, see Figure \ref{2pi2piatNS}. 
\end{proof}
\begin{figure}[h]
\includegraphics[scale=0.15]{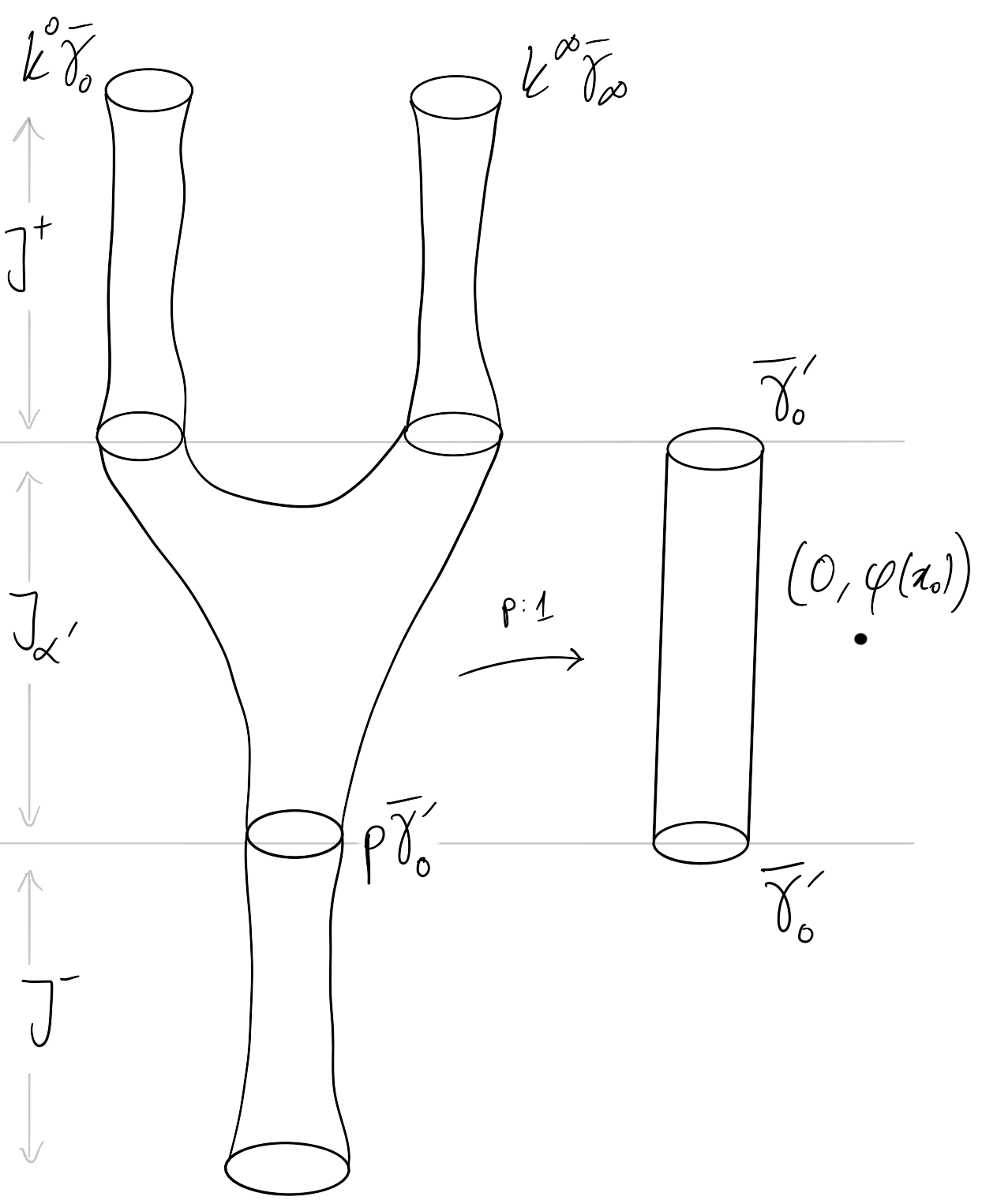}
\centering
\caption{The case $\mathcal{A}^+=2\pi$ and $\mathcal{A}^-=2\pi$.}
\label{2pi2piatNS}
\end{figure}
Now consider the case $\mathcal{A}^+=2\pi(1+1/p)$ and $\mathcal{A}^-=2\pi$. In this case $u_0$ is a $J_{\alpha'}$- holomorphic pair of pants. Moreover we have cylinders $C_1$ and $C_2$ in $W^+$  such that $C_1$ has the positive end $k^0\overline{\gamma}_0$ and   $C_2$ has the positive end $k^\infty\overline{\gamma}_\infty$.
Following the discussions given in the previous sections we conclude that $u_0$ is pair of pants with a base curve of degree 1. We have two possible profiles for the positive ends of $u_0$:
\begin{enumerate}
\item $u_0(0)=(+\infty, l^0\overline{\gamma}_0')$ and $u_0(\infty)=(+\infty, l^\infty\overline{\gamma}_\infty')$,
\item $u_0(0)=(+\infty, l^\infty\overline{\gamma}_\infty')$ and $u_0(\infty)=(+\infty, l^0\overline{\gamma}_0')$.
\end{enumerate}
In both cases we have $l^0+l^\infty=p+1$. Following Lemma \ref{localrem} we have 
$$l^0\equiv (1-v')^{-1} \;\;{\rm and }\;\; l^{\infty}\equiv (1-q')^{-1}$$
in the first case. Here the integer $v'$ is given by $1<v'<p-1$ and  $v'q'\equiv 1$. Now the negative end of $C_1$ is $l^0\overline{\gamma}_0'$ and the negative end of $C_2$ is $l^\infty\overline{\gamma}_\infty'$. We apply Proposition \ref{actiondecreasefinal} to $C_1$ and $C_2$ and use the fact that $l^0+l^\infty=k^0+k^\infty=p+1$ to conclude $l^0 =k^0$ and $l^\infty=k^\infty$. We finally get 
$$(1-v')^{-1}\equiv l^0=k^0\equiv (1-v)^{-1}\;\Rightarrow\; v'\equiv v\;\Rightarrow\; q'\equiv q.$$
In the second case, we have
$$l^0\equiv (1-q')^{-1} \;\;{\rm and }\;\; l^{\infty}\equiv (1-v')^{-1}.$$
Considering the cylinder $C_1$ again we get
$$(1-q')^{-1}\equiv l^0=k^0\equiv (1-v)^{-1}\;\Rightarrow\; q'\equiv v\;\Rightarrow\; q'\equiv q^{-1}.$$
This concludes the proof the theorem.
\subsection{Action control via 4-dimensional tools}
In this section we prove Proposition \ref{actiondecrease} and Proposition \ref{actiondecreasefinal}. The method is to apply intersection theory for closed holomorphic curves to the lifts of relevant punctured curves in symplectic cobordisms associated to lens spaces. Once these curves are lifted to the cobordisms associated to $S^3$, we compactify these cobordisms and extend the punctured curves to closed curves in order to apply the intersection theory.
\subsubsection{Basics of intersection theory}
We briefly recall the basics of the intersection theory of closed holomorphic curves. For the details and proofs
of the following statements, we refer to \cite{MW} and \cite{Wendl-low}.

Let $W$ be a closed oriented 4-manifold, $\Sigma$ and $\Sigma'$ be closed oriented surfaces, and let 
$u:\Sigma\rightarrow W$,
$v:\Sigma'\rightarrow W$ be smooth maps. An intersection $u(z)=v(w)=p$ is \textit{transverse} if  
$du(T_z\Sigma)\oplus dv(T_w\Sigma')=T_pW$. We say this intersection is \textit{positive}  if the direct
sum of the orientations of the surfaces coincides with the orientation of $W$ and say \textit{negative}
otherwise. We define \textit{the local intersection index} $\imath(u,z;v,w)$ as +1 if the intersection
is positive and -1 otherwise. We note that if an intersection is transverse then it is isolated. Hence
if all intersections of $u$ and $v$ are transverse, there are finitely many of them and we can define
the \textit{total intersection number} 
$$[u]\cdot[v]=\sum_{u(z)=v(w)}\imath(u,z;v,w).$$
It turns out that $[u]\cdot[v]$ depends only on the homology classes  $[u],[v]\in H_2(W)$. Moreover, it 
defines a bilinear symmetric form on $H_2(W)$, which is non-degenerate.

If an intersection $u(z)=v(w)=p$ is not transverse but still isolated, one can still define a local 
intersection index as follows. One localizes the intersection via closed discs $D_z$ and $D_w$ 
around $z$ and $w$. Then one picks $C^\infty$-small perturbation $u_\epsilon$ of $u$ so that 
when restricted to  $D_z$ and $D_w$, $u_\epsilon$ and $v$ have only transverse intersections and
$u_\epsilon(\partial D_z)\cap v(D_w)=\emptyset$. Then one defines 
$$\imath(u,z;v,w)=\sum_{u_\epsilon(z')=v(w')}\imath(u_\epsilon,z';v,w')$$
where the sum is taken for $(z',w')\in D_z\times D_w$.

Now we assume that $W$ is equipped with an almost complex structure $J$ and it is oriented via $J$.
We also assume that $\Sigma$ and $\Sigma'$ carry 
complex structures $j$ and $j'$ respectively, and they are oriented via $j$ and $j'$. Finally, we assume that
$u$ and $v$ are closed $J$- holomorphic curves, that is $du\circ j=J\circ du$ 
and $dv\circ j=J\circ dv$. A well known fact is that any intersection  $u(z)=v(w)=p$ of 
two such curves is either isolated or there are neighbourhoods $z\in U$ and $w\in V$ such that $u(U)=v(V)$.
We note that this phenomenon is independent of the dimension of $W$. 

The special features of the 
case $\dim W=4$ are as follows. It is clear that for any transverse intersection $u(z)=v(w)=p$,  
we have $\imath(u,z;v,w)=+1$. The non-trivial fact is that if an intersection is isolated 
then $\imath(u,z;v,w)\geq 1$, with equality if and only if the intersection is transverse. This phenomenon
is referred as \textit{local positivity of intersections} and has the following global consequence. 
By the principle of unique continuation, one can show that if $u$ and $v$ have infinitely many intersections
then ${\rm im}(u)={\rm im}(v)$, that is either one is a reparametrization of the other or they are multiple 
covers of the same simple curve. Hence, if ${\rm im}(u)\neq {\rm im}(v)$, there are finitely many intersections
and we have
$$[u]\cdot [v]\geq \# \{(z,w)\in \Sigma\times \Sigma'\,|\, u(z)=v(w)\},$$
with equality if and only if all the intersections are transverse. In particular, $[u]\cdot [v]=0$ if 
and only if ${\rm im}(u)\cap {\rm im}(v)=\emptyset$. We refer this fact as 
\textit{global positivity of intersections}.

Finally, we want to address the question of self-intersections of a holomorphic curve. Let 
$u:\Sigma\rightarrow W$ be a closed $J$- holomorphic curve. If $u$ is not simple, then it is clear that
it has infinitely many double points. But if $u$ is simple, one has only finitely many intersections. 
For a simple curve $u$, one defines 
$$\delta(u)=\frac{1}{2}\sum_{u_\epsilon(z)=u_\epsilon (w),\, z\neq w} \imath(u_\epsilon,z;v_\epsilon,w),$$
where $u_\epsilon$ is some $C^\infty$-small perturbation of $u$ that is immersed. It turns out that $\delta(u)$
is non-negative for any such perturbation $u_\epsilon$, and vanishes if and 
only if $u$ is embedded. In fact, $\delta(u)$ is independent of the chosen 
perturbation since it satisfies  the \textit{adjunction formula}:
$$[u]\cdot[u]=2\delta +c_N(u),$$
where the\textit{ normal Chern number} $c_N(u)$ is defined as $c_N(u)=c_1([u])-\chi(\Sigma)$. We note that 
$c_N(u)=c_1(N_u)$ when $u$ is immersed, where $N_u$ is the normal bundle of $u$. 

\subsubsection{Intersections in the completions of the symplectizations and symplectic cobordisms} We recall the biholomorphic identification between $(\mathbb{R}\times S^3,J_0)$ and $L^*$, where the latter space is the total space of the tautological line bundle without its zero section. Let $$u:\Sigma\setminus (\Gamma^+\cup\Gamma^-)\rightarrow \mathbb{R}\times S^3\cong L^*$$
be a finite energy $J_0$- holomorphic curve, 
where $k^+_i$'s/$k^-_j$'s are the multiplicities of positive/negative ends of $u$. We recall from the previous sections that 
$$c:=\pi\circ u:\Sigma\setminus (\Gamma^+\cup\Gamma^-)\rightarrow \mathbb{C}P^1,$$
extends to a closed curve $c:\Sigma\rightarrow \mathbb{C}P^1$ with degree $d$ 
and $u$ is identified with a meromorphic section of the bundle $c^*L\rightarrow \Sigma$, zeros corresponding the negative ends and poles corresponding 
to the positive ends, with 
$$ \#{\rm zeros} - \#{\rm poles}=-d.$$
In particular $\sum_i k^+_i- \sum_j k^-_j=d\geq 0$. We note that this inequality holds for any punctured curve in $\mathbb{R}\times S^3$ that is holomorphic with respect to  
a translation invariant almost complex structure induced by $\alpha_0$ due to the positivity of $d\alpha$-energy. Namely one has 
$$\sum_i 2\pi k^+_i- \sum_j 2\pi k^-_j\geq 0.$$
Now we want to interpret this property in terms of positivity of intersections so that it generalizes to the settings with almost complex structures that coincide with $J_0$ near the ends. 

We now consider the completion $\widehat{L}$ of $L$, where we compactify 
each fibre of $L$ by turning it into $\mathbb{C}P^1$. We get the complex manifold 
$\widehat{L}$ with
the complex structure $\hat{J}_0$ that extends $J_0$. In fact $\widehat{L}$ is nothing but $\mathbb{C}P^2$ blown up at one point but we do not need this description in what follows so we stick with our notation. Note that $\widehat{L}$ is a sphere bundle over $\mathbb{C}P^1$ such that each fibre is holomorphic. Moreover $\widehat{L}$ contains two holomorphically embedded spheres $S_0$ and $S_{\infty}$, first being the zero section of
 $L$ and second being \qq{the section at infinity}. It is easy to see that 
$$H_2(\widehat{L},\mathbb{Z})=\mathbb{Z}\cdot [S_0]\oplus \mathbb{Z}\cdot [S_{\infty}].$$
We observe that $[S_0]\cdot [S_0]=c_1(N_0)=-1$. 
Here $N_0$ is the normal bundle of $S_0$ and it can be identified with the bundle $L$. 
Similarly, $[S_{\infty}]\cdot [S_{\infty}]=c_1(N_{\infty})=1$ since the normal bundle
$N_{\infty}$ of $S_{\infty}$ can be identified with the dual bundle of $L$. It is also clear that $[S_{0}]\cdot [S_{\infty}]=0$. 

Now we extend $u$ to $\widehat{L}$ by extending the corresponding meromorphic section over the zeros and poles using the local holomorphic coordinates. We note that the extension is unique and 
we end up with a closed curve, denoted by $\hat{u}$ in $\widehat{L}$. We observe that $[\hat{u}]\cdot[S_0]=K^-$ and $[\hat{u}]\cdot[S_{\infty}]=K^+$, where $K^-:=\sum k^-_j$ and $K^+:=\sum k^+_i$. In fact, if $u$ has a negative end with multiplicity $k$ at a puncture $z$ and $c$ has the ramification number $r$ at $z$, then one can 
locally write $u(z)=(z^r,z^k)$. In order to compute the intersection number with $[S_0]$ at $z$, 
one needs to compute the local intersection number between $u$ and $v(w)=(w,0)$. Since the intersection at 0 is not transverse, we perturb $u$, and
put $u_\epsilon(z)=(z^r,z^k+\epsilon)$. We see that an intersection $(z,w)$ is a solution of the system $(z^r,z^k+\epsilon)=(w,0)$ and the second 
coordinates produce $k$-distinct roots $z_1,...,z_k$ of $-\epsilon$ and for each $z_i$, we have $w_i=z_i^r$. We note that $v'(w_i)=(1,0)$ and 
$u'(z_i)=(z_i^r, kz_i^{k-1})$ has a non-vanishing second coordinate and hence the intersections are transversal. Applying the same argument at each isolated intersection that appears at each negative end leads to the claim. Note that the same argument applies to the positive ends. 

We write the homology class $[\hat{u}]=m[S_0]+n[S_\infty]$ for some $m,n\in \mathbb{Z}$ and compute
$$K^-=[\hat{u}]\cdot[S_0]=\left(m[S_0]+n[S_\infty]\right)\cdot[S_0]=m[S_0]\cdot [S_0]+n[S_\infty]\cdot [S_0]=-m$$
$$K^+=[\hat{u}]\cdot[S_\infty]=\left(m[S_0]+n[S_\infty]\right)\cdot[S_\infty]=m[S_0]\cdot [S_\infty]+n[S_\infty]\cdot [S_\infty]=n.$$
Hence we get 
\begin{equation}\label{classofuhat}
[\hat{u}]=-K^-[S_0]+K^+[S_\infty]
\end{equation}
and therefore 
$$[\hat{u}]\cdot[\hat{u}]=(-K^-)^2[S_0]\cdot[S_0]+(K^+)^2[S_\infty]\cdot[S_\infty]=(K^+)^2-(K^-)^2.$$
We also that 
$$c_1([S_\infty])=[S_\infty]\cdot [S_\infty]+\chi(S_\infty)=1+2=3,$$
$$c_1([S_0])=[S_0]\cdot [S_0]+\chi(S_0)=-1+2=1.$$
Hence we get
\begin{equation}\label{chernofuhat}
c_1([\hat{u}])=-K^-c_1([S_0])+K^+c_1([S_\infty])=3K^+-K^-.
\end{equation} 
Next we assume that $u$ is simple. In this case $\hat{u}$ is simple as well and by the adjunction formula we get 
$$(K^+)^2-(K^-)^2=2\delta(\hat{u})+3K^+-K^--(2-2g)$$ 
where $g$ is the genus of $\Sigma$. In particular
$$(K^+)^2-(K^-)^2-3K^++K^-+2=2\delta(\hat{u})+2g\geq 0.$$
We note that $K^+\geq 1$. This is due to the fact that the almost complex structure is tamed by an exact symplectic form and therefore a curve without positive ends cannot exist. Consequently $-K^+\geq -3K^++2$ and 
$$(K^+-K^-)(K^++K^--1)=(K^+)^2-(K^-)^2-K^++K^-\geq 0.$$
Hence if $K^++K^->1$ then $K^+\geq K^-$ and if $K^++K^-=1$ then $K^-=0$ and $K^+\geq K^-$.

Now if $u$ is not simple and say $v$ is the underlying simple curve with $L^+$/$L^-$ being the total multiplicity of its positive/negative ends, then the above argument says that $L^+\geq L^-$. It is easy to see that $K^+=NL^+$ and $K^-=NL^-$, where $N$ is the degree of the underlying branched covering. Hence we get $K^+\geq K^-$.

We claim that the above discussion applies if one considers an almost complex structure with cylindrical ends. In fact if $J$ is an almost complex structure on $\mathbb{R}\times S^3$ which coincides with $J_0$ outside of a compact set then it extends to an almost complex structure $\hat{J}$ on $\widehat{L}$ and we have embedded $\hat{J}$- holomorphic spheres $S_0$ and $S_\infty$ with the properties that $[S_0]\cdot[S_0]=-1$, $[S_\infty]\cdot[S_\infty]=1$ and $[S_0]\cdot[S_\infty]=0$. Now let $u$ be a finite energy $J$- holomorphic curve and $z$ be a negative puncture with the asymptotic end of multiplicity $k$. Note that the projection $L\rightarrow \mathbb{C}P^1$ is not any more $J$- holomorphic. Nevertheless we consider a punctured disk neighbourhood $D^*$ of $z$ in $\Sigma$ and consider $u:D^*\rightarrow L^*$. This map is $J_0$- holomorphic and leads to a holomorphic map $c:D^*\rightarrow \mathbb{C}P^1$. Since $u$ has finite energy, $c$ extends over the origin and one gets a section $f:D^*\rightarrow c^*L$. Clearly the section $f$ is holomorphic and with an isolated zero of order $k$ at the origin due to the asymptotic behaviour of $u$. A similar argument applies to the positive punctures and we conclude that $u$ extends to a closed curve $\hat{u}$ in $(\widehat{L},\hat{J})$. Moreover near the intersection points the local models of $\hat{u}$ and $S_{0/\infty}$ are as before and we get
$[\hat{u}]\cdot [S_\infty]=K^+$ and $[\hat{u}]\cdot [S_0]=K^-$, where $K^+/K^-$ is the sum of the multiplicities of the positive/negative ends. Consequently (\ref{classofuhat}) holds and the rest of the above computation goes through.
\subsubsection{Proofs of Proposition \ref{actiondecrease} and Proposition \ref{actiondecreasefinal}}
We want to apply the above discussion to the lifts of curves that are given in Proposition \ref{actiondecrease} and Proposition \ref{actiondecreasefinal}. To this end we need to specify coverings of the cobordisms at hand. We fix an equivariant lift $\tilde{\varphi}:S^3\rightarrow S^3$ of the contactomorphism (\ref{contacto}). We put $\beta_0:=\tilde{f}\alpha_0=\tilde{\varphi}^*\alpha_0$. Here $\tilde{f}=f\circ\texttt{p}:S^3\rightarrow (0,+\infty)$ is the lift of (\ref{f}). We let 
$$\tilde{\Phi}: \mathbb{R}\times S^3\rightarrow \mathbb{R}\times S^3$$ be the corresponding lift of (\ref{exactsymplecto}). Here the actions of $\sigma$ and $\sigma'$ are extended to be invariant under the translation along $\mathbb{R}$-directions and $\tilde{\Phi}$ is also equivariant. We put $\widetilde{\Sigma}:=\tilde{\Phi}(\{0\}\times S^3)$. We note that for any $n$, the construction of $W_n$ given by (\ref{defnofWn}) lifts via $\tilde{\Phi}$. Namely we have the covering space $\widetilde{W}_n\rightarrow W_n$, where the group of Deck transformations is $\mathbb{Z}_p$ and for each $n$ we have an equivariant diffeomorphism 
$$\tilde{\Phi}_n:\widetilde{W}_n\rightarrow \mathbb{R}\times S^3$$
that lifts (\ref{Phi_n}). Consequently  $J_n$ lifts to an invariant almost complex structure $\tilde{J}_n$ on $\widetilde{W}_n$. Looking at the picture on the other side, $\tilde{J}_n$ is a $\sigma$-invariant almost complex structure on $\mathbb{R}\times S^3$ and we have a homotopy of invariant almost complex structures $(\tilde{J}_{t})_{t\in [0,1]}$ on $\mathbb{R}\times S^3$, which connects $J_0$ and $\tilde{J}_n$ and  lifts the the path $(J_{t})_{t\in [0,1]}$. 
\begin{figure}[h]
\includegraphics[scale=0.2]{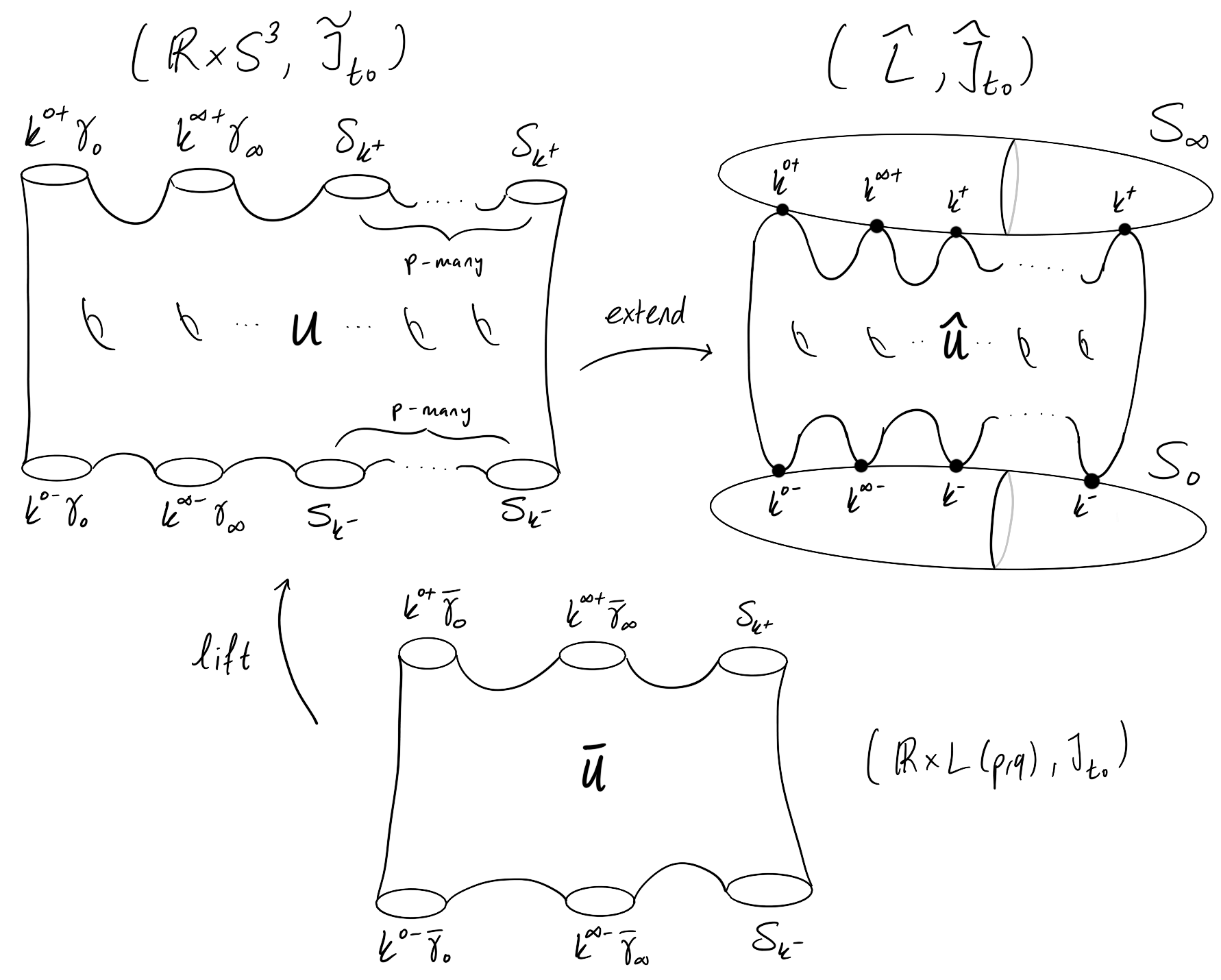}
\centering
\caption{Intersections after lifting and extending.}
\label{extendLhat}
\end{figure}

Let $J_{t_0}$ be the almost complex structure on $\mathbb{R}\times L(p,q)$ given in Proposition \ref{actiondecrease} and let $\overline{u}:\mathbb{C}P^1\setminus \Gamma\rightarrow \mathbb{R}\times L(p,q)$ be a $J_{t_0}$- holomorphic curve. First we assume that $\overline{u}$ has a non-contractible end and the asymptotic profile of $\overline{u}$ is as follows:
\begin{itemize}
\item positive ends:
    \begin{itemize}
    \item non-contractible ones: $k^{0,+}_i\overline{\gamma}_0$ where $k^{0,+}_i\not\equiv 0$ for $i=1,...,n^+_0$ and $k^{\infty,+}_i\overline{\gamma}_{\infty}$ where $k^{\infty,+}_i\not\equiv 0$ for $i=1,...,n^+_\infty$,     
    \item contractible ones: in the orbit spaces $S^+_{k^+_i}$ of action $2\pi k^+_i$ for  $i=1,...,n^+_c$
    \end{itemize}
\item negative ends:
    \begin{itemize}
    \item non-contractible ones: $k^{0,-}_i\overline{\gamma}_0$ where $k^{0,-}_i\not\equiv 0$ for $i=1,...,n^-_0$ and $k^{\infty,-}_i\overline{\gamma}_{\infty}$ where $k^{\infty,-}_i\not\equiv 0$ for $i=1,...,n^-_\infty$,
    \item contractible ones: in the orbit spaces $S^-_{k^-_i}$ of action $2\pi k^-_i$ for  $i=1,...,n^-_c$. 
     \end{itemize}     
\end{itemize}
Then using the scheme given in previous section, which is purely topological, after precomposing it with a suitable $p$-fold covering 
$\mathfrak{p}:\Sigma\setminus \tilde{\Gamma}\rightarrow \mathbb{C}P^1\setminus \Gamma$, we get a lifted $\tilde{J}_{t_0}$- holomorphic curve 
$u:\Sigma\setminus \tilde{\Gamma}\rightarrow \mathbb{R}\times S^3$. Recall that once $\mathfrak{p}$ is extended over the punctures, the punctures in $\Gamma$ with non-contractible ends are precisely the branch points of $\mathfrak{p}$ and each of these branch points has a unique preimage in $\tilde{\Gamma}$. Hence at each such preimage the ramification number is $p$. On the other hand the punctures in $\Gamma$ with contractible ends are all regular points hence each such puncture has precisely $p$-many preimages. As a result the asymptotics of $u$ are given as follows.
\begin{itemize}
\item positive ends:
    \begin{itemize}
    \item $k^{0,+}_i\gamma_0$ for $i=1,...,n^+_0$ and  $k^{\infty,+}_i\gamma_{\infty}$ for $i=1,...,n^+_\infty$,
    \item for each $i\in\{1,...,n^+_c\}$, \textbf{$p$-many}
     positive ends in the orbit space $S^+_{k^+_i}$ of multiplicity $k^+_i$. 
    \end{itemize}     
\item  negative ends:
    \begin{itemize}
    \item $k^{0,-}_i\gamma_0$ for $i=1,...,n^-_0$ and $k^{\infty,-}_i\gamma_{\infty}$ for $i=1,...,n^-_\infty$,
    \item for each $i\in\{1,...,n^-_c\}$, \textbf{$p$-many} negative ends in the orbit space $S^+_{k^-_i}$ of multiplicity $k^-_i$. 
    \end{itemize}      
 \end{itemize}
Since $\tilde{J}_{t_0}$ coincides with $J_0$ near the ends, the discussion above applies to $u$ and we get
\begin{equation}\label{ineqforlift}
\sum\limits^{n^+_{0}}_{i=1} k^{0,+}_i+\sum\limits^{n^+_{\infty}}_{i=1} k^{\infty,+}_i+p\sum\limits^{n_c^+}_{i=1}k^+_i\geq \sum\limits^{n^-_{0}}_{i=1} k^{0,-}_i+\sum\limits^{n^-_{\infty}}_{i=1} k^{\infty,-}_i+p\sum\limits^{n_c^-}_{i=1}k^-_i.
\end{equation} 
Recall that the action of the orbit $\overline{\gamma}_{0/\infty}$ is given by $2\pi/p$. Hence dividing both sides of (\ref{ineqforlift}) by $p$ and multiplying with $2\pi$ shows that the total action at the positive ends of $\overline{u}$ is not less than the total action at its negative ends. Now if $\overline{u}$ does not have any non-contractible end, it lifts without being pre-composed with a covering and the result again follows. This concludes the proof of Proposition \ref{actiondecrease}.

For Proposition \ref{actiondecreasefinal}, we repeat the same reasoning for the lifts of corresponding curves. More precisely, we let $\widetilde{W}^+\rightarrow W^+$ denote the covering space, which is determined by the lifting scheme given above and let $\tilde{J}^+$ be the lift of $J^+$. Let $\overline{u}$ be a given $J^+$- holomorphic component in $W^+$. Then the asymptotic ends of $\overline{u}$ are given as above, where for the negative non-contractible ends $\overline{\gamma}_{0/\infty}$'s are replaced with $\overline{\gamma}'_{0/\infty}$'s. Since the contactomorphism $\varphi$ induces an isomorphism on the fundamental groups $\overline{\gamma}'_{0/\infty}$'s also generate the fundamental group of $W^+$ and our lifting scheme applies the covering $\widetilde{W}^+\rightarrow W^+$ as well. Consequently we have a $\tilde{J}^+$-holomorphic lift $u:\Sigma\setminus\tilde{\Gamma}\rightarrow \widetilde{W}^+ $ with the asymptotics given as before.

Now we extend $u$ to a closed curve as follows. Topologically we view $\widetilde{W}^+$ as the the upper connected component $\widetilde{N}^+$ of $(\mathbb{R}\times S^3)\setminus \widetilde{\Sigma}$. We compactify $\widetilde{W}^+$ by collapsing its positive end, namely $S^3$, to the 2-sphere $S_\infty$ via the Reeb flow of $\alpha_0$ and by collapsing its negative end, namely $\tilde{\Sigma}$, to the 2-sphere $S_0$ via the Reeb flow of $\beta_0$. When we view $\widetilde{W}^+$ as an almost complex manifold equipped $\tilde{J}^+$, the negative end is given by $S^3$ and it is collapsed via the Reeb flow of $\alpha_0$ since $\tilde{J}^+$ coincides with $J_0$ near the negative end. We denote the resulting almost complex manifold by $(\widehat{W}^+,\hat{J}^+)$. It is not hard to see that $H_2(\widehat{W}^+)$ is generated again by two spherical classes $[S_0]$ and $[S_\infty]$ and clearly the representatives $S_0$ and $S_\infty$ are $\hat{J}^+$- holomorphic. Since $\hat{J}^+$ coincides with $\hat{J_0}$ also on a tubular neighbourhoods of $S_\infty$ in $\widehat{W}^+$, we get same intersection properties of $S_0$ and $S_\infty$ and due to the asymptotic behaviour of $u$ the equation (\ref{classofuhat}) holds and the rest of the computation goes through. Hence the total multiplicities of the positive ends of $u$ is greater or equal to the total multiplicities of the negative ends of $u$. Relating this inequality to total actions of positive and negative ends of $\overline{u}$ leads to the claim. 

We note that the same argument applies to the components in $(W^-,J^-)$ and this finishes the proof of Proposition \ref{actiondecreasefinal}.

\end{document}